\title{Small noise limit and convexity for generalized incompressible flows, Schr\"odinger problems, and optimal transport}
\author{
Aymeric \textsc{Baradat}
\thanks{CMLS \'Ecole polytechnique, 91128 Palaiseau Cedex, FRANCE}
\thanks{DMA \'Ecole normale sup\'erieure, 45 rue d’Ulm, F-75230 Paris Cedex 05, FRANCE
\newline
\href{mailto:aymeric.baradat@polytechnique.edu}{\texttt{aymeric.baradat@polytechnique.edu}}
}
\and
L\'eonard \textsc{Monsaingeon}
\thanks{IECL Universit\'e de Lorraine, F-54506 Vandoeuvre-l\`es-Nancy Cedex, FRANCE
}
\thanks{GFM Universidade de Lisboa, Campo Grande, Edif\'icio C6, 1749-016 Lisboa, PORTUGAL
\newline
\href{mailto:leonard.monsaingeon@univ-lorraine.fr}{\texttt{leonard.monsaingeon@univ-lorraine.fr}}
}
}
\date{}
\theoremstyle{plain}
\newtheorem{Thm}{Theorem}[section]
\newtheorem{Prop}[Thm]{Proposition}
\newtheorem{Lem}[Thm]{Lemma}
\theoremstyle{definition}
\newtheorem{Def}[Thm]{Definition}
\theoremstyle{remark}
\newtheorem{Rem}[Thm]{Remark}
\newcommand{\R}{\mathbb{R}}
\newcommand{\Z}{\mathbb{Z}}
\newcommand{\T}{\mathbb{T}}
\newcommand{\E}{\mathbb{E}}
\newcommand{\N}{\mathbb{N}}
\newcommand{\A}{\mathcal{A}}
\newcommand{\F}{\mathcal{F}}
\newcommand{\Abar}{\overline{\mathcal{A}}}
\newcommand{\Hbar}{\overline{\mathcal{H}}}
\renewcommand{\AA}{\boldsymbol{\mathcal{A}}}
\newcommand{\FF}{\boldsymbol{\mathcal{F}}}
\newcommand{\HH}{\boldsymbol{\mathcal{H}}}
\newcommand{\PP}{{\boldsymbol{P}}}
\newcommand{\QQ}{\boldsymbol{Q}}
\newcommand{\rhont}{\rho^\nu_t}
\newcommand{\Tbar}{\overline{T}}
\newcommand{\Bbar}{\overline{B}{}}
\newcommand{\Rbar}{\overline{R}{}}
\newcommand{\narrowcv}{\overset{\ast}{\rightharpoonup}}
\newcommand{\XX}{\boldsymbol{X}}
\newcommand{\oomega}{\overline{\omega}}
\newcommand{\cg}{\langle}
\newcommand{\cd}{\rangle}
\newcommand{\OT}{\mathsf{OT}}
\newcommand{\Sch}{\mathsf{Sch}}
\newcommand{\REu}{\mathsf{REu}}
\newcommand{\Bro}{\mathsf{Br\ddot{o}}}
\newcommand{\MREu}{\mathsf{MREu}}
\newcommand{\MBro}{\mathsf{MBr\ddot{o}}}
\newcommand{\dist}{\mathsf{d}}
\newcommand{\dd}{\mathrm{d}}
\newcommand{\DMK}{\mathsf{d_{MK}}}
\newcommand\pf{_{\#}}
\newcommand{\Iiota}{\boldsymbol{\iota}}
\renewcommand{\P}{\mathcal{P}}
\renewcommand{\H}{\mathcal{H}}
\DeclareMathOperator{\Leb}{Leb}
\DeclareMathOperator{\Div}{div}
\DeclareMathOperator{\D}{d\!}
\DeclareMathOperator{\Id}{Id}
\DeclareMathOperator{\Inc}{\iota_{\small Inc}}
\DeclareMathOperator{\IInc}{\boldsymbol\iota_{\small Inc}}
\begin{document}

\title{Small noise limit and convexity for generalized incompressible flows, Schr\"odinger problems, and optimal transport
\thanks{L.M. was supported by the Portuguese Science Foundation through FCT project PTDC/MAT-STA/0975/2014 \textit{From Stochastic Geometric Mechanics to Mass Transportation Problems} and FCT project PTDC/MAT-STA/28812/2007 \textit{Schr\"oMoka}.
}
}
% \subtitle{Do you have a subtitle?\\ If so, write it here}

\title{Small noise limit and convexity for generalized incompressible flows, Schr\"odinger problems, and optimal transport}

\author{
Aymeric \textsc{Baradat}
\thanks{CMLS \'Ecole polytechnique, 91128 Palaiseau Cedex, FRANCE}
\thanks{DMA \'Ecole normale sup\'erieure, 45 rue d’Ulm, F-75230 Paris Cedex 05, FRANCE
\newline
\href{mailto:aymeric.baradat@polytechnique.edu}{\texttt{aymeric.baradat@polytechnique.edu}}
}
\and
L\'eonard \textsc{Monsaingeon}
\thanks{IECL Universit\'e de Lorraine, F-54506 Vandoeuvre-l\`es-Nancy Cedex, FRANCE
}
\thanks{GFM Universidade de Lisboa, Campo Grande, Edif\'icio C6, 1749-016 Lisboa, PORTUGAL
\newline
\href{mailto:leonard.monsaingeon@univ-lorraine.fr}{\texttt{leonard.monsaingeon@univ-lorraine.fr}}
}
}
\date{\today}
% The correct dates will be entered by the editor

\maketitle

\begin{abstract}
This paper is concerned with six variational problems and their mutual connections: The quadratic Monge-Kantorovich optimal transport, the Schr\"odinger problem, Brenier's relaxed model for incompressible fluids, the so-called Br\"odinger problem recently introduced by M. Arnaudon \& \emph{al.} \cite{arnaudon2017entropic}, the multiphase Brenier model, and the multiphase Br\"odinger problem.
All of them involve the minimization of a kinetic action and/or a relative entropy of some path measures with respect to the reversible Brownian motion.
As the viscosity parameter $\nu\to 0$ we establish Gamma-convergence relations between the corresponding problems, and prove the convergence of the associated pressures arising from the incompressibility constraints.
We also present new results on the time-convexity of the entropy for some of the dynamical interpolations.
Along the way we extend previous results by H. Lavenant \cite{lavenant2017time} and J-D. Benamou \& \emph{al.} \cite{benamou2017generalized}.\\
\end{abstract}
\medskip
\textit{Keywords:} generalized incompressible flows; multimarginal optimal transport; Schr\"odinger problem; relative entropy; Gamma-convergence
\newpage
\tableofcontents

\section{Introduction}

 Since the works of Y. Brenier in the 90's \cite{brenier1989least,brenier1993dual,brenier1999minimal} it is known that there is a deep connection between generalized incompressible flows (inviscid Euler equations) and Monge-Kantorovich optimal transport.
 A decade later, C. L\'eonard, M. Cuturi, and T. Mikami established independently a link between deterministic optimal transport and the so-called Schr\"odinger problem (also referred to as \emph{entropic optimal transport}) \cite{mikami2004monge,leonard2012schrodinger,cuturi2013sinkhorn,leonard2013survey}.
 The two approaches are closely related, see \emph{e.g.} \cite{gentil2015analogy} for a thorough discussion.
 Recently both variational problems were blended into a single framework and the corresponding \emph{entropic interpolation problem for incompressible viscid fluids}, also known as the \emph{Br\"odinger} problem (contraction of Brenier and Schr\"odinger), was introduced in \cite{arnaudon2017entropic}.
 Just as Brenier's variational relaxed formulation leads to the incompressible Euler (inviscid) equation, this new approach leads to some particular incompressible motion of multiphase viscid fluids.
 See \cite{arnaudon2017entropic,baradat2018existence} for more discussions on these fluid-mechanical aspects.

In this paper, we will deal with six optimization problems: the Monge-Kantorovic optimal transport problem, the Schr\"odinger problem, Brenier's relaxed model for incompressible fluids, the Br\"odinger problem, and multiphase versions of the last two problems.
All of them amount to minimizing various path-measure action functionals, either an ensemble-average of the kinetic energy or a stochastic entropic version thereof, together with suitable constraints.
We will introduce these problems in full details in Section~\ref{sec:presentation}, but let us first present them in a few words:
In the optimal transport problem $\OT$ the action to be minimized is given by a deterministic average kinetic energy, and the initial and terminal distributions are prescribed.
The Schr\"odinger problem $\Sch_\nu$ is identical, except that an entropic stochastic perturbation is now added to the kinetic energy.
For Brenier's problem, \emph{i-e} the Relaxed Euler formulation $\REu$, the functional is again the total kinetic action, but the flow must now satisfy an additional incompressibility constraint for all times.
This forces in particular the initial and terminal distributions to equal the Lebesgue measure, and one is given instead the initial-to-terminal \emph{coupling} of the particle trajectories.
Br\"odinger's problem $\Bro_\nu$ is identical to Brenier's formulation, with the addition of an extra regularizing term as in $\Sch_\nu$.
Finally, the Multiphase Relaxed Euler and Multiphase Br\"odinger problems $\MREu,\MBro_\nu$ are multiphase versions of $\REu$ and $\Bro_\nu$, respectively, where the fluid under consideration is now seen as a superposition of a whole continuum of phases, each of them having prescribed initial and final distributions.

Let us highlight that the Schr\"odinger problem, the Br\"odinger problem and the multiphase Br\"odinger problem feature a viscosity parameter $\nu>0$ encoding the thermal, stochastic fluctuations in the system.
In this work we mainly take interest in the small noise limit $\nu\to 0$, and we present here a complete picture in terms of Gamma-convergence for the corresponding variational problems.
We fill some missing gaps in the theory, in particular we prove the convergence of the Br\"odinger problem towards Brenier's relaxed formulation (including multiphase versions thereof).

We will use slightly different approaches for $\REu$ and $\Bro_\nu$ on the one hand, and for $\OT$, $\Sch_{\nu}$, $\MREu$, and $\MBro_{\nu}$ on the other hand.
Indeed, for the latter four models we favor dynamical Benamou-Brenier-like formulations \cite{benamou2000computational}.
For the first two problems such a formulation does not exist, and we will argue directly at the level of the underlying stochastic processes and Lagrangian trajectories.
This will result in more involved proofs but also in finest statements, and explains why we will study first $\REu$ and $\Bro_\nu$ and then the other problems.

\paragraph{Informal presentation of the contents}
We will give rigorous statements of our main results later on in Section~\ref{section:contributions}, but for now let us briefly summarize our contributions and ideas.
As already mentioned, we will be mostly concerned with the small noise limit, and the convergences below should be understood as $\nu\to 0$.

Our first result (Theorem~\ref{thm:GammaCv_process}) will assert the $\Gamma$-convergence of the Br\"odinger problem towards Brenier's Relaxed formulation of Euler's equation, $\Bro_\nu\to\REu$.
This improves results of \cite{benamou2017generalized} where the same convergence was recently proved for a discrete-time version of the problem only, but our method of proof is completely different.
As already said, this will be the most involved part of the paper.
As often for Gamma-convergence problems, the $\Gamma-\liminf$ part will follow from more or less standard lower semicontinuity arguments and the main challenge is rather the construction of suitable recovery sequences for the $\Gamma-\limsup$.
Roughly speaking, the latter sequences will be constructed by stochastic deformation of classical trajectories: given a deterministic path-measure $P$, we will construct a stochastic perturbation $P^\nu\to P$ by superposing Brownian bridges of small variance $\nu$ centered at paths $\omega=(\omega_t)_{t\in [0,1]}$ initially charged by $P$.
A careful analysis will then reveal that the resulting Br\"odinger stochastic action of $P^\nu$ does not exceed the Brenier's deterministic action of $P$ by more than $\mathcal O(\nu)$.

Our second contribution consists in a new independent proof of convergence of the entropic Schr\"odinger problem towards deterministic optimal transport (Theorem~\ref{thm:CV_Sch_OT}), $\Sch_\nu\to\OT$, already known from \cite{mikami2004monge,leonard2012schrodinger,cuturi2013sinkhorn,carlier2017convergence}.
The main ingredient will be a quantified regularization procedure, Lemma~\ref{lem:fund}, which is nothing but an Eulerian PDE-version of our previous Lagrangian stochastic deformation:
Given a curve of probability measures $\rho=(\rho_t)_{t\in [0,1]}$ with finite kinetic energy, we construct a noisy approximation $\rho^\nu_t\coloneqq \tau_{\nu t(1-t)}*\rho_t$ by running the heat flow for short times ($\tau_s$ is the heat kernel at time $s>0$), and show that its dynamical Schr\"odinger action deviates from the Benamou-Brenier action of the original curve by at most $\mathcal O(\nu)$ in order to retrieve the $\Gamma-\limsup$.

Our third result (Theorem~\ref{thm:GammaCv_multiphase}) establishes the convergence of the Multiphase Br\"odinger problem towards the multiphase Relaxed Euler formulation, $\MBro_\nu\to\MREu$, and is completely new as far as we know.
The proof essentially consists in superposing by linearity our previous construction of recovery sequences for single phases, using the fact that the heat flow preserves the Lebesgue measure.
Since both problems include an incompressibility constraint some pressure fields arise as the corresponding Lagrange multipliers, and we prove convergence of the pressures $p^\nu\to p$ as well.

Finally, and as a byproduct of the previous analysis, our last set of contributions will focus on the time-convexity of the entropy functional along the relevant dynamical interpolations:
the optimal transport and the Schr\"odinger problems on the one hand (Proposition~\ref{prop:convexity_OT_Sch}), and on the other hand the multiphase Brenier  and multiphase Br\"odinger problems (Proposition~\ref{prop:convexity_MREu_MBro}).
Part of our results here are well-known \cite{mccann1997convexity,leonard2017convexity}, but we either provide again a new and rather elementary proof of independent interest, or improve recent results from \cite{lavenant2017time}.
The argument will be again purely variational: Starting from an optimal curve $\rho$, we exploit the same previous Eulerian regularization to construct a noisy competitor $\rho^\nu$ for the optimization problem under consideration.
The necessarily positive sign of the defect at first order $\mathcal O(\nu)$ reveals that the entropy of $\rho^\nu$ must be convex in time, and we conclude by taking $\nu\to 0$.
\paragraph{Outline.}
The paper is organized as follows: Section~\ref{sec:presentation} contains the precise formulations of the six variational problems, as well as rigorous statements of our main results and preliminary definitions.
In Section~\ref{section:GammaCv_process} we discuss the convergence $\Bro_\nu\to\REu$ by probabilistic arguments.
Section~\ref{section:convergence_Sch_OT} contains the PDE regularization procedure, Lemma~\ref{lem:fund}, as well as our new proof of the convergence $\Sch_\nu\to\OT$ of entropic towards deterministic optimal transport.
In Section~\ref{section:convergence_MBro_MREu} we prove the corresponding result for incompressible multiphase flows, namely $\MBro_\nu\to\MREu$.
We also show that the associated pressures converge.
Our last Section~\ref{section:convexity} is devoted to the time-convexity of the entropy in the various models.
We include in Appendix~\ref{sec:exist_unique_MBro} a self-contained proof of the existence and uniqueness of solutions for $\MBro$.
In Section~\ref{section:GammaCv_process} we shall heavily rely on some explicit properties of the Brownian motion and bridges on the torus: in order not to interrupt the exposition we will simply give the technical statements when needed in the text, and defer the proofs to Appendix~\ref{section:brownian_torus}.

%
%%%%%%%%%%%%%%%%%%%%%%%%%%%%%%%%%%%%%%%%%%%%%%%%%%%%%%%%%%%%%%%%%
\section{Formulation of the problems and main statements}
\label{sec:presentation}
In this paper we carry out the whole analysis in the $d$-dimensional flat torus $\T^d=(\R^d/\Z^d)$ for technical convenience, but some of the arguments might be adapted for convex domains $\Omega\subset\R^d$ (in which case the natural boundary conditions should be the homogeneous Neumann ones, corresponding to mass conservation and reflected Brownian motion).
We denote by $\dist$ the geodesic distance on the torus and $\pi: \R^d \to \T^d$ the canonical projection.
The normalized Lebesgue measure on the torus is denoted by $\Leb$ (sometimes also $\D x$ or $\D y$ when no ambiguity arises).
If $\mathcal{X}$ is a Polish space, $\P(\mathcal{X})$ stands for the set of Borel probability measures on $\mathcal{X}$.
If $\rho\in\P(\mathcal X)$ we use the notation $\D\rho$ in the integrals, i-e we write $\int_{\mathcal X}\varphi(x)\D\rho(x)$ for the integral of $\varphi$ with respect to the measure $\rho$.
When $\rho\in \P(\T^d)$ is absolutely continuous with respect to the Lebesgue measure $\D x$ we will often identify it with its Radon-Nikodym density, and simply write $\D\rho(x)=\rho(x)\D x$ with a slight abuse of notations.
The standard Brownian motion on the torus with diffusivity $\nu>0$ and starting from $x\in \T^d$ will be denoted by $R^\nu_x$, and similarly $\Rbar^\nu_{\bar x}$ will stand for the Brownian motion on the whole space started from $\bar x\in\R^d$.
The \emph{reversible Brownian motion} on the torus is obtained by choosing a uniform initial distribution,
$$
R^\nu\coloneqq \int_{\T^d}R^\nu_x\,\D x.
$$
\subsection{The variational problems}
First of all, let us describe precisely the six optimization problems that we will be dealing with throughout. 
\paragraph{The quadratic optimal transport problem.}
Given $\rho_0$ and $\rho_1$ in $\P(\T^d)$, the quadratic Monge-Kantorovich (square) distance is defined as:
\begin{equation}
\DMK^2(\rho_0, \rho_1) \coloneqq  \inf\limits_{\gamma}\,\frac{1}{2} \int \dist(x,y)^2 \D \gamma(x,y),
\label{eq:def_DMK}
\end{equation}
where the infimum runs over the set of \emph{admissible plans}, that are by definition the measures $\gamma \in \P(\T^d \times \T^d)$ with first marginal $\rho_0$ and second marginal $\rho_1$.
This provides a distance on $\P(\T^d)$ that metrizes the topology of narrow convergence\footnote{
We recall that narrow convergence is the weak-$*$ convergence of measures in duality with continuous, bounded test functions.
}, and we refer to the classical monographs \cite{villani2008optimal,santambrogio2015optimal} for a detailed account on the theory of optimal transport and extended bibliography.\\

Note that \eqref{eq:def_DMK} is a static formulation.
In order to introduce the celebrated dynamical Benamou-Brenier counterpart \cite{benamou2000computational}, let us first recall the notion of absolutely continuous curves.
Following \cite[Chapter 1]{ambrosio2006gradient}, if $(\mathcal X,\mathfrak d)$ is a Polish space and $1\leq p < +\infty$, we say that a curve $x:[0,T]\to \mathcal X$ belongs to $AC^p([0,T];\mathcal X)$ if the \emph{metric speed} $|\dot x|(t)\coloneqq \lim\limits_{s\to t}\frac{\mathfrak d(x(s),x(t))}{|s-t|}$ exists for a.e. $t\in [0,T]$ and belongs to $L^p(0,T)$.
In the (complete) Wasserstein space $(\P(\T^d),\DMK)$, $AC^2$ curves can be further characterized in terms of continuity equations and kinetic energy as:
\begin{Thm}[{\cite[Thm. 8.3.1]{ambrosio2006gradient}}]
\label{thm:characterization_AC2}
Let $\rho=(\rho_t)_{t\in[0,1]}$ be a curve from $[0,1]$ to $\P(\T^d)$.
Then $\rho$ belongs to $AC^2([0,1]; \P(\T^d))$ if and only if there exists a measurable vector field $c=c_t(x)$ belonging to $L^2([0,1]\times \T^d, \D t \otimes \rho_t)$,
\emph{i-e}
$$
\|c\|^2_{L^2([0,1]\times \T^d, \D t \otimes \rho_t)}\coloneqq \int_0^1 \hspace{-5pt} \int |c_t(x)|^2 \D \rho_t(x) \D t \quad <+\infty,
$$
such that the continuity equation
\begin{equation*}
\partial_t \rho_t + \Div(\rho_t c_t) = 0
\end{equation*}
holds in a weak sense.
In that case
\begin{equation}
\int_0^1 |\dot{\rho}_t|^2 \D t = \min\limits_c \int_0^1 \hspace{-5pt}
\int |c_t(x)|^2 \D \rho_t(x) \D t,
\label{eq:def_cmin_speed_MK}
\end{equation}
where the metric derivative $|\dot{\rho}_t|=\lim\limits_{h\to 0}\frac{\DMK(\rho_t,\rho_{t+h})}{h}$ is well defined for almost all $t$ and the minimum is taken among all such $L^2$ vector fields $c$.
\end{Thm}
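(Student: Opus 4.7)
My plan is to treat the two implications separately and then match the $L^2$ norm to the metric derivative by combining the resulting bounds. The argument takes place entirely in the Wasserstein space $(\P(\T^d), \DMK)$.

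For the easy direction, suppose $c \in L^2([0,1]\times\T^d, \D t \otimes \rho_t)$ solves $\partial_t \rho_t + \Div(\rho_t c_t) = 0$ weakly. I would invoke the Ambrosio--Lisini superposition principle \cite[Thm.~8.2.1]{ambrosio2006gradient}: there exists $\eta \in \P(C([0,1];\T^d))$ concentrated on absolutely continuous paths $\omega$ with $(e_t)_\# \eta = \rho_t$ for all $t$ and $\dot\omega(t) = c_t(\omega(t))$ for $\eta$-a.e.\ $\omega$ and a.e.\ $t$. Then $(e_s, e_t)_\# \eta$ is admissible in~\eqref{eq:def_DMK}, and Jensen yields
\[
\DMK^2(\rho_s, \rho_t) \leq \frac{1}{2}\int \dist(\omega_s,\omega_t)^2 \D\eta(\omega) \leq \frac{t-s}{2}\int_s^t\hspace{-5pt}\int |c_r(x)|^2 \D\rho_r(x)\,\D r.
\]
Dividing by $(t-s)^2$ and sending $s\to t$ gives $\rho\in AC^2$ together with the pointwise bound $|\dot\rho_t|^2 \leq \tfrac12\|c_t\|^2_{L^2(\rho_t)}$ a.e., hence one of the inequalities in~\eqref{eq:def_cmin_speed_MK}.

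For the converse, assume $\rho \in AC^2([0,1];\P(\T^d))$. The strategy is to build the minimal velocity as a Riesz representative of a suitable linear form on gradients of smooth test functions. Fix $\varphi \in C^\infty(\T^d)$. Kantorovich--Rubinstein already gives absolute continuity of $t\mapsto\int\varphi\D\rho_t$, but only with the crude constant $\mathrm{Lip}(\varphi)$. The crucial refinement is to upgrade this constant into the weighted norm $\|\nabla\varphi\|_{L^2(\rho_t)}$: by testing against a $W_2$-optimal plan $\pi^{s,t}$ between $\rho_s$ and $\rho_t$, performing a first-order Taylor expansion of $\varphi$ along it, and applying Cauchy--Schwarz, one shows that
\[
\Bigl|\tfrac{d}{dt}\!\int\!\varphi\D\rho_t\Bigr| \leq \sqrt{2}\,|\dot\rho_t|\,\|\nabla\varphi\|_{L^2(\rho_t)} \qquad \text{for a.e.\ } t.
\]
Consequently the linear form $L_t:\nabla\varphi\mapsto \tfrac{d}{dt}\!\int\!\varphi\D\rho_t$ extends continuously to the $L^2(\rho_t)$-closure of $\{\nabla\varphi : \varphi \in C^\infty(\T^d)\}$, and the Riesz representation theorem yields a unique $c_t$ in this closure with $\|c_t\|_{L^2(\rho_t)} \lesssim |\dot\rho_t|$. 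By construction $(\rho_t,c_t)$ solves the continuity equation, and integrating delivers the reverse inequality in~\eqref{eq:def_cmin_speed_MK}.

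The main obstacle is precisely the sharpened duality estimate: a naive Kantorovich--Rubinstein bound controls $L_t$ only by $\|\nabla\varphi\|_\infty$, which would produce a vector field with the wrong (non-sharp) norm. Recovering the $L^2(\rho_t)$-norm requires genuine use of the $W_2$-optimality of the plans $\pi^{s,t}$---for instance through Brenier's theorem when $\rho_t$ is absolutely continuous, or through a measurable selection of Kantorovich potentials $\psi^{s,t}$ so that $c_t$ arises as the limit of $\nabla\psi^{s,t}$ as $s\to t$. A secondary technical point is ensuring joint $(t,x)$-measurability of the selection $c$, which is handled by standard Souslin/measurable-selection arguments. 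Once both inequalities in~\eqref{eq:def_cmin_speed_MK} are established, the minimum is attained by the $c$ just constructed, and it is automatically of minimal norm: any other $L^2$ solution of the continuity equation differs from it by a field orthogonal in $L^2(\rho_t)$ to gradients, hence has strictly larger norm unless equal to $c_t$.
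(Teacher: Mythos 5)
The paper cites this result from \cite{ambrosio2006gradient} without reproducing a proof, so there is no in-house argument to compare yours against; what you sketch is essentially the classical argument from that reference. The overall structure is sound: superposition (or, as AGS themselves do, mollification followed by explicit characteristic flows) for the ``continuity equation $\Rightarrow AC^2$'' direction, and Riesz representation on the $L^2(\rho_t)$-closure of smooth gradients for the converse. Two remarks. First, a small technical point: Jensen in $\omega$ yields the pointwise bound on the metric speed, but to establish the $AC^2$ property itself you want $\DMK(\rho_s,\rho_t)\le\int_s^t m(r)\,\D r$ for some $m\in L^2(0,1)$, which follows more cleanly from Minkowski's integral inequality applied to $\int_s^t|c_r(\omega_r)|\,\D r$ in $L^2(\eta)$, giving $m(r)=\tfrac{1}{\sqrt 2}\|c_r\|_{L^2(\rho_r)}$. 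Second, and more importantly, track your constants: you derive $|\dot\rho_t|^2\le\tfrac12\|c_t\|_{L^2(\rho_t)}^2$ in the easy direction and $\|c_t\|_{L^2(\rho_t)}\le\sqrt2\,|\dot\rho_t|$ in the converse, which combine to give $|\dot\rho_t|^2=\tfrac12\min_c\|c_t\|_{L^2(\rho_t)}^2$---off by a factor $2$ from \eqref{eq:def_cmin_speed_MK}. Your computation is in fact internally consistent with the paper's $\DMK$, which carries a $\tfrac12$ inside \eqref{eq:def_DMK}; but the formula \eqref{eq:def_cmin_speed_MK} is stated in the $W_2$-normalization used in \cite{ambrosio2006gradient} (no $\tfrac12$). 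The intended reading is the $W_2$ one---that is how $\A$ is subsequently used, e.g.\ in the identity $\A(\rho)=\tfrac12\int_0^1\int|c_t|^2\,\D\rho_t\,\D t$ at the start of the proof of Lemma~\ref{lem:fund}---so either take the metric derivative with respect to $W_2$ rather than $\DMK$, or explicitly insert the compensating factor; otherwise your carefully tracked constants ``disprove'' \eqref{eq:def_cmin_speed_MK} as literally written. With that adjustment, the refined duality estimate via optimal plans, the Riesz argument on the closure of gradients, and the orthogonality characterization of the minimal vector field are all exactly as in the cited reference.
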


This allows to reformulate the optimal transport problem by saying that a curve of measures $\rho=(\rho_t)_{t\in[0,1]}$ is admissible for the problem $\OT(\rho_0, \rho_1)$ if it is of regularity $AC^2([0,1]; \P(\T^d))$, and if it coincides with $\rho_0$ at time $t=0$ and with $\rho_1$ at time $t=1$.
It is a solution to $\OT(\rho_0, \rho_1)$ if it minimizes the kinetic action functional defined by
\begin{equation}
\label{eq:defA}
\A(\rho) \coloneqq  \frac{1}{2} \int_0^1 |\dot{\rho}_t|^2 \D t
\end{equation}
for absolutely continuous curves, and set to $+ \infty$ otherwise, on the set of all admissible curves.

The equivalence between the static formulation and the dynamical one is due to Benamou and Brenier in \cite{benamou2000computational}, and this formulation in terms of metric derivatives can be found for example in \cite[Chapter 5]{santambrogio2015optimal}.
The functional $\A$ is convex, proper (the preimage of bounded sets are relatively compact) and lower semicontinuous for the topology of uniform convergence on $C([0,1] ; \P(\T^d))$ when $\P(\T^d)$ is endowed with the distance $\DMK$.
There always exists at least one minimizer -- as for the static problem --  but uniqueness does not hold in general (typically due to non-uniqueness of geodesics in the torus).

\paragraph{The Schr\"odinger problem.}
We also formulate this problem in a dynamical Benamou-Brenier version.
To do so, we first need to introduce another functional on the curves of probability measures:
if $\rho$ is a curve on $\P(\T^d)$ with the additional regularity $\sqrt{\rho} \in L^2([0,1]; H^1(\T^d))$ (recall that when $\rho$ has density with respect to the Lebesgue measure we abuse notations and write $\D\rho_t(x)=\rho_t(x)\D x$), the following vector field is well defined for almost all $t\in (0,1)$ and $\rho_t$-almost all $x\in\T^d$:
\begin{equation*}
\nabla \log \rho_t \coloneqq  2 \frac{\nabla \sqrt{\rho_t}}{\sqrt{\rho_t}}.
\end{equation*}
For such curves $\rho$, we define
\begin{equation}
\label{eq:defF}
\F(\rho) \coloneqq  \frac{1}{8} \int_0^1 \hspace{-5pt}\int | \nabla \log \rho_t |^2 \D \rho_t \D t = 
\frac{1}{2}\int_0^1 \hspace{-5pt}\int |\nabla \sqrt{\rho_t}|^2 \D x \D t.
\end{equation}
If $\rho$ is not regular enough we set $\F(\rho) \coloneqq  + \infty$.
The quantity $\F(\rho)$ is nothing but the time-integral of the \emph{Fisher information}
$$
F(\rho_t)\coloneqq \frac 12\int \left|\nabla\frac 12\log\rho_t\right|^2\D \rho_t
$$
along the curve $\rho$.
(We use here the customary probabilistic $\frac 12$ factor for the generator of the Brownian motion, accounting for the final $\frac 18$ factor in \eqref{eq:defF}: indeed the osmotic velocity field in $\frac{1}{2}\Delta\rho=\Div(\rho\nabla\frac{1}{2}\log\rho)$ is $w(\rho)=\nabla\frac 12\log\rho$, and $F(\rho)=\frac 12\int |w(\rho)|^2\D\rho$ is the corresponding kinetic energy.)
It is strictly convex and lower semicontinuous for the topology of uniform convergence on $C([0,1] ; \P(\T^d))$.

Given $\rho_0,\rho_1$ in $\P(\T^d)$ and a diffusion parameter $\nu>0$, we say that a curve $\rho$ is admissible for the Schr\"odinger problem $\Sch_{\nu}(\rho_0, \rho_1)$ if it belongs to $AC^2([0,1]; \P(\T^d))$, if it coincides with $\rho_0$ at time $0$ and with $\rho_1$ at time $1$, and if $\F(\rho) < + \infty$.
A solution to $\Sch_{\nu}(\rho_0, \rho_1)$ is an admissible curve that minimizes
\begin{equation}
\label{eq:defH}
\H_{\nu}(\rho) \coloneqq  \frac{1}{2} \int_0^1 |\dot{\rho}_t|^2 \D t + \frac{\nu^2}{8} \int_0^1 \hspace{-5pt}\int | \nabla \log \rho_t |^2 \D \rho_t \D t =\A(\rho) + \nu^2 \mathcal{F}(\rho)
\end{equation}
in the set of all admissible curves. 

The Schr\"odinger problem dates back to articles by E. Schr\"odinger himself \cite{schrodinger1931umkehrung,schrodinger1932theorie}. Since then, it has given rise to a large amount of work, see for example \cite{zambrini1986variational,follmer1988random,cruzeiro2000bernstein,leonard2013survey} and references therein.
Classically, this problem is rather formulated in terms of relative entropy with respect to the Brownian motion, as we will do for the Br\"odinger problem.
Our dynamical framework is however equivalent, as observed in numerous papers -- see for instance \cite[Section~IV]{chen2016relation}, \cite[Cor.~5.8]{gentil2015analogy} or the introduction of \cite{gigli2018benamou}.

The functional $\H_{\nu}$ is strictly convex, proper, and lower semicontinuous for the topology of uniform convergence on $C([0,1] ; \P(\T^d))$.
As proved for example in \cite{leonard2013survey}, minimizers exist if and only if the initial and final entropies are finite:
\begin{equation*}
\int \rho_0 \log \rho_0 \,\D x< + \infty \qquad \mbox{and} \qquad \int \rho_1 \log \rho_1 \,\D x< + \infty,
\end{equation*}
and uniqueness is automatically guaranteed by strict convexity.

In most situations the structure of the minimizers is well understood, we refer once again to \cite{leonard2013survey} for a survey on the following results:
Typically (and usually requiring soft conditions on $\rho_0$ and $\rho_1$), the curve $\rho$ is a solution to $\Sch_\nu(\rho_0,\rho_1)$ if and only if there are two nonnegative functions $f = f(x)$ and $g = g(x)$ such that for all $t \in [0,1]$
\begin{equation*}
\rho_t = (f * \tau_{\nu t}) \times (g * \tau_{\nu (1-t)}),
\end{equation*}
where $(\tau_s)_{s \geq 0}$ is the heat kernel on $\T^d$.
Due to the endpoint constraints, $f$ and $g$ are obtained by solving 
the so-called \emph{Schr\"odinger} system:
\begin{equation*}
\left\{
\begin{aligned}
f \times (g * \tau_{\nu}) &= \rho_0,\\
(f * \tau_\nu )\times g &= \rho_1,
\end{aligned}
\right.
\end{equation*}
a nonlinear integral system studied in great details by various authors \cite{fortet1940resolution,beurling1960automorphism,jamison1975markov}.
Most of the known results on the Schr\"odinger problem strongly leverage this decomposition property of $\rho$.
We stress that our variational analysis below will not exploit this structure at all, hence our approach is more robust when new constraints are added.

The Schr\"odinger problem is used nowadays as an entropic regularization of optimal transport.
Indeed, the functional $\H_{\nu}$ Gamma-converges to the functional $\A$, as proved by L\'eonard in \cite{leonard2012schrodinger} and by Carlier \emph{et Al.} in \cite{carlier2017convergence} after partial results by Mikami in \cite{mikami2004monge}. (We also provide a new independent proof of this fact, see Theorem~\ref{thm:CV_Sch_OT} below.)
One of the advantages of this entropic optimal transport is clearly that $\H_\nu$ has a unique minimizer, but most importantly that it is more tractable numerically speaking and allows for extremely fast and efficient computational strategies for optimal transport -- see e.g. \cite{cuturi2013sinkhorn,benamou2015iterative}.

\paragraph{The Brenier model for incompressible fluids.}
Brenier's original model \cite{brenier1989least} is somehow an incompressible version of the optimal transport problem.
Here the data is a plan $\gamma \in \P(\T^d \times \T^d)$ whose marginals are both the Lebesgue measure (we say that $\gamma$ is \emph{bistochastic}), corresponding to assigning the initial-to-terminal distribution of particles in the fluid.
Brenier's problem is an optimization problem in the set of \emph{generalized flows}, \emph{i-e} in the set of probability measures on continuous paths on the torus $\P(C([0,1]; \T^d))$
(This is the set of laws of continuous processes.)
In this paper we choose to adopt the notation $\REu$ for ``Relaxed Euler'' instead of $\mathsf{Bre}$ (for ``Brenier'') in order to avoid confusion with our notation $\Bro$ for the Br\"odinger problem.
A generalized flow $P\in \P(C([0,1]; \T^d))$ is said to be admissible for Brenier's problem $\REu(\gamma)$ if:
\begin{enumerate}[label=(\roman*)]
\item
\label{item:joint_law}
the joint law $P_{0,1}$ between the initial and final times under $P$ is $\gamma$, which means that for all test functions $\varphi$ on $\T^d \times \T^d$,
\begin{equation}
\label{eq:gamma_endpoint}
\int \varphi(\omega_0, \omega_1) \D P(\omega) = \int \varphi(x,y) \D \gamma(x,y),
\end{equation}
\item
\label{item:incompressibility}
$P$ is incompressible, \emph{i-e} the marginal $P_t$ of $P$ at time $t$ is the Lebesgue measure: For all test-functions $\varphi$ on $\T^d$ and all $t \in [0,1]$,
\begin{equation}
\label{eq:incompressible_flow}
\int \varphi(\omega_t) \D P(\omega) = \int \varphi(x) \D x,
\end{equation}
\item 
\label{item:finite_action}
the following action functional is finite:
\begin{equation*}
\Abar(P) \coloneqq  \int A(\omega) \D P(\omega) < + \infty,
\end{equation*}
where the action $A$ of a path $\omega\in C([0,1];\T^d)$ is defined by
\begin{equation}
A(\omega) \coloneqq  \frac{1}{2} \int_0^1 |\dot{\omega}_t|^2 \D t
\label{eq:def_kinetic_action_path}
\end{equation}
for absolutely continuous curves, and is otherwise set to $+\infty$.
In particular an admissible $P$ should only charge absolutely continuous (in fact $AC^2$) curves.
\end{enumerate}
A solution to $\REu(\gamma)$ is then an admissible generalized flow which minimizes $\Abar$ in the set of all admissible generalized flows.

This model has been introduced and studied for the first time by Y. Brenier in \cite{brenier1989least} as a relaxation of the incompressible Euler equation in its variational formulation (due to V. Arnold in \cite{arnold1966geometrie}, developed in \cite{arnold1999topological}).
The functional $\Abar$ is affine, proper and lower semicontinuous for the topology of narrow convergence, and there always exist minimizers \cite[Section~4]{brenier1989least}.
Note that uniqueness does not hold in general, see \emph{e.g.} \cite{bernot2009generalized}.

\paragraph{The Br\"odinger problem.}
The Br\"odinger problem is an entropic version of the Brenier problem.
Given a bistochastic $\gamma\in \P(\T^d\times\T^d)$ and a diffusivity parameter $\nu>0$, a generalized flow $P$ is said to be admissible for the problem $\Bro_\nu(\gamma)$ if conditions \ref{item:joint_law} and \ref{item:incompressibility} in Brenier's model hold, and if \ref{item:finite_action} is replaced by:
\begin{enumerate}[label=(iii)']
\item the relative entropy of $P$ with respect to the reversible Brownian motion $R^{\nu}$ with diffusivity $\nu$ is finite,
\begin{equation*}
\Hbar_{\nu}(P) \coloneqq  \nu H(P \,|\, R^{\nu}) < + \infty.
\end{equation*}
\end{enumerate}
The relative entropy functional $(p,r) \mapsto H(p\, |\, r)$ will be properly defined in section \ref{sec:notations} below.
A solution to $\Bro_\nu(\gamma)$ is then an admissible generalized flow $P$ that minimizes $\Hbar_{\nu}$ in the set of all admissible generalized flows.

The Br\"odinger problem has been introduced and studied for the fist time in \cite{arnaudon2017entropic}.
The relative entropy $\H_\nu$ is strictly convex, proper and lower semicontinuous for the topology of narrow convergence, and it is proved in \cite{arnaudon2017entropic} that there exists a unique minimizer if and only if the following marginal entropy is finite:
\begin{equation*}
H(\gamma \,|\, \Leb \otimes \Leb) < + \infty.
\end{equation*}
As in the Schr\"odinger problem, uniqueness is guaranteed by strict convexity of the relative entropy.

\paragraph{The multiphase Brenier model.}
The Brenier and Br\"odinger models each have a multiphase version.
These are useful when studying the pressure field (which is the Lagrange multiplier associated with the incompressibility constraint) in these generalized incompressible models.
For Brenier's model, the multiphase version was introduced by Brenier himself in \cite{brenier1993dual,brenier1997homogenized}.
Following Lavenant in \cite{lavenant2017time}, we adopt here the point of view of \emph{traffic plans} (see also \cite{bernot2005traffic}), which are probability measures on the set $C([0,1]; \P(\T^d))$, \emph{i-e} elements of the set $\P(C([0,1]; \P(\T^d)))$.
Heuristically, each curve of measures $\rho\in C([0,1]; \P(\T^d))$ charged by a traffic plan $\PP\in \P(C([0,1]; \P(\T^d)))$ represents a phase in the fluid.
The ideal fluid can then be seen as a (possibly continuous) superposition of all those phases.
All the phases are coupled by an incompressibility constraint, and evolve so as to minimize the total kinetic action.

In this setting, the data is a probability measure $\Gamma \in \P\left( \P(\T^d) \times \P(\T^d) \right)$ with the compatibility condition
\begin{equation*}
\int \rho_0 \D  \Gamma(\rho_0, \rho_1) = \int \rho_1 \D  \Gamma(\rho_0, \rho_1) = \Leb.
\end{equation*}
We say that $\Gamma$ is \emph{bistochastic in average}.
Intuitively, $\D \Gamma(\rho_0, \rho_1)$ can be thought of as the fraction of phases $\rho$ that coincide with $\rho_0$ at time $0$ and with $\rho_1$ at time $1$.
A traffic plan $\PP$ is declared admissible for the problem $\MREu(\Gamma)$ if:
\begin{enumerate}[label=(\roman*)]
\item \label{item:Mjoint_law} the joint law between the initial and terminal times under $\PP$ is $\Gamma$, which means that for all test function $\varphi$ on $\P(\T^d) \times \P(\T^d)$,
\begin{equation}
\label{eq:Mjoint_law}
 \int \varphi(\rho_0, \rho_1)  \D \PP(\rho) = \int \varphi(\rho_0,\rho_1) \D \Gamma(\rho_0,\rho_1),
\end{equation}
\item
\label{item:Mincompressibility} 
for all $t$, the average of the density at time $t$ under $\PP$ is the Lebesgue measure, which means that for all $t \in [0,1]$,
\begin{equation}
\label{eq:incomressibility_MREu}
\int \rho_t \D \PP(\rho) = \Leb,
\end{equation}
\item
\label{item:Mfinite_action}
the following action functional is finite:
\begin{equation}
\label{eq:Mfinite_action}
\AA(\PP) \coloneqq  \int \A(\rho) \D \PP(\rho) < + \infty,
\end{equation}
where $\A$ is defined in \eqref{eq:defA}.
In particular $\PP$ should only charges absolutely continuous (and in fact $AC^2$) curves.
\end{enumerate}
A solution to $\MREu(\Gamma)$ is then a minimizer of $\AA$ over all admissible traffic plans.
 
The functional $\AA$ is affine, proper and lower semicontinuous for the topology of narrow convergence, and there always exist minimizers as proved in \cite{brenier1993dual}.
(The proof only requires slight modifications in order to fit within our description in terms of traffic plans, see also \cite[Thm. 2.12]{lavenant2017time})
Analogously to the Brenier model, uniqueness does not hold in general.
 
 Brenier's original model can be seen as a particular case of the multiphase Brenier model.
 Indeed, if $\gamma\in\P(\T^d\times\T^d)$ is bistochastic, one can canonically construct an associated $\Gamma\in \P\left( \P(\T^d) \times \P(\T^d) \right)$ by prescribing, for all test functions $\varphi$ on $\P(\T^d) \times \P(\T^d)$:
 \begin{equation}
 \label{eq:gamma_Gamma}
\int \varphi(\rho_0, \rho_1) \D \Gamma(\rho_0, \rho_1) = \int \varphi(\delta_x, \delta_y) \D \gamma (x,y).
 \end{equation}
One can then slightly adapt \cite[Section~4]{ambrosio2009geodesics} and build a solution to $\MREu(\Gamma)$ from a solution to $\REu(\gamma)$, and \emph{vice versa}.
We also refer to \cite{lavenant2017time} for an explanation on how to reformulate the works of Brenier and Ambrosio--Figalli in terms of traffic plans. 
%
%%%%%%%%%%%%%%%%%%%%%%%%%%%%%%%%%%%%
\paragraph{The multiphase Br\"odinger problem.}
The multiphase Br\"odinger problem $\MBro_\nu$ is to the multiphase Brenier model what the Schr\"odinger problem is to the optimal transport problem, namely an entropic stochastic regularization of some sort.
As for the multiphase Brenier model, we choose here an exposition in terms of traffic plans.
If $\Gamma$ is bistochastic in average and $\nu>0$, a traffic plan $\PP\in \P(C([0,1];\P(\T^d)))$ is said to be admissible for $\MBro_{\nu}(\Gamma)$ if points \ref{item:Mjoint_law} \ref{item:Mincompressibility} \ref{item:Mfinite_action} in the multiphase Brenier model hold, and if in addition:
\begin{enumerate}[label=(iv)]
\item the \emph{average Fisher information}
\begin{equation}
\label{eq:Mfinite_fischer}
\FF(\PP) \coloneqq  \int \F(\rho) \D \PP(\rho)
\end{equation}
is finite. We recall that $\F$ is defined in \eqref{eq:defF}.
\end{enumerate}
A solution to $\MBro_{\nu}(\Gamma)$ is then a traffic plan that minimizes the functional
\begin{equation*}
\HH_{\nu}(\PP) \coloneqq  \AA(\PP) + \nu^2 \FF(\PP) = \int \H_{\nu}(\rho) \D \PP(\rho)
\end{equation*}
 among all admissible traffic plans (where $\H_{\nu}$ is defined in \eqref{eq:defH}). 
In this setting, the problem was introduced by the first author in \cite{baradat2018existence} in order to prove the existence of a scalar pressure field in the Br\"odinger problem.
The functional $\HH_{\nu}$ is affine, proper and lower semicontinuous for the topology of narrow convergence.
In appendix~\ref{sec:exist_unique_MBro} we prove existence and uniqueness of minimizers under the entropy condition:
\begin{equation}
\label{eq:M_initial_final_entropy_finite}
\int \hspace{-5pt} \int \rho_0 \log \rho_0 \D \Gamma(\rho_0, \rho_1) < + \infty \quad \mbox{and} \quad \int \hspace{-5pt} \int \rho_1 \log \rho_1 \D \Gamma(\rho_0, \rho_1) < + \infty.
\end{equation}
In fact this condition is also necessary for the existence of minimizers because any plan with $\HH_\nu(\PP)<\infty$ must have finite marginal entropies, but we omit the details (see Remark~\ref{rem:necessary_marginal_entropies} later on).
However, contrarily to what happens in the Brenier model, the Br\"odinger problem is not  a particular case of the multiphase Br\"odinger problem.
Indeed, the construction \eqref{eq:gamma_Gamma} described in the Brenier case gives rise to a $\Gamma$ that cannot satisfy the entropy condition \eqref{eq:M_initial_final_entropy_finite} since $H(\delta_x\,|\,\Leb)=+\infty$ for all $x\in\T^d$.
There is still a link between $\Bro$ and $\MBro$, but we refrain from discussing further this connection and rather refer to \cite[Section~9]{baradat2018existence}.
\subsection{The pressure field in the incompressible models}
As already mentioned, each of the four incompressible models ($\REu$, $\MREu$, $\Bro$, $\MBro$) features its own Lagrange multiplier corresponding to the incompressibility constraint.
This \emph{pressure field}, a scalar distribution, can be interpreted as the differential of the optimal action when the prescribed density is perturbed, in accordance with the envelope theorem.
In this paper we will study these pressure fields for the multiphase models only, but similar results might be developed in the Brenier model or for the Br\"odinger problem. 

The precise definition of the pressure fields will be recalled later on in Theorem~\ref{thm:existence_pressure}, but let us make a bit more explicit the idea.
One can prove that, if $\PP$ is a solution to $\MREu(\Gamma)$, then there exists a unique $p \in \mathcal{D}'((0,1)\times \T^d)$ such that, for all $\QQ$ satisfying the joint law condition~\eqref{eq:Mjoint_law} but with perturbed density
\begin{equation*}
\int \rho_t \D \QQ(\rho) = (1 + \varphi) \Leb
\end{equation*}
for small but arbitrary $\varphi \in \mathcal{D}((0,1) \times \T^d)$, there holds
\begin{equation*}
\AA(\QQ) \geq \AA(\PP) + \cg p, \varphi \cd.
\end{equation*}
As a consequence, in the perturbed problem with perturbed density $(1 + \varphi) \Leb$ instead of $\Leb$, the optimal action is perturbed at first order by the quantity $\cg p, \varphi\cd$. 
This was first proved in \cite{brenier1993dual} for the Brenier model (the proof in the multiphase Brenier model case is a direct adaption).
As proved by the first author in \cite{baradat2018existence}, it is still true replacing $\MREu(\Gamma)$ by $\MBro_\nu(\Gamma)$ and $\AA$ by $\HH_\nu$.

In turn, the pressure field is a relevant object to study the dynamics of the solutions: It is a self-induced potential accelerating the particles (i-e $\ddot\omega_t=-\nabla p(t,\omega_t)$ for almost all trajectories at least when the pressure is regular enough) in such a way as to preserve the incompressibility constraint.
We refer to \cite[Thm. 6.8]{ambrosio2009geodesics} for an illustration of this fact for the Brenier model $\REu$.
%
%%%%%%%%%%%%%%%%%%%%%%%%%%%%%%%%%%%%%%%%%%%%%%%%%%%%%
\subsection{Contributions}
\label{section:contributions}
Our first result will assert the convergence of the Br\"odinger problem towards the Brenier model as the diffusivity $\nu\to 0$.
If $\gamma \in \P(\T^d \times \T^d)$ we write $\iota_{\gamma}$ for the characteristic function corresponding to the marginal constraint \eqref{eq:gamma_endpoint}, and $\Inc$ is the characteristic function corresponding to the incompressibility constraint \eqref{eq:incompressible_flow}.
In other words, if $P$ is a generalized flow,
\begin{equation*}
\iota_{\gamma}(P) \coloneqq  \left\{\begin{aligned}
&0 & \mbox{if }P_{0,1}=\gamma\\
&+ \infty & \mbox{else,}
\end{aligned} \right.
\quad \mbox{and} \quad 
\Inc(P) \coloneqq  \left\{ \begin{aligned}
&0 & \mbox{if }P_t=\Leb \mbox{ for all }t\\
&+ \infty & \mbox{else.}
\end{aligned} \right.
\end{equation*}
Our statement reads then
\begin{Thm}
\label{thm:GammaCv_process}
With the same notations as before,
	\begin{enumerate}
		\item
		\label{item:GammaCv_no_marge}
		The following $\Gamma$-convergence holds:
		\begin{equation*}
		\Gamma-\lim_{\nu \to 0}\,\{ \Hbar_{\nu} + \Inc \}= \Abar + \Inc.
		\end{equation*} 
		\item \label{item:GammaCv_with_marge} If $\gamma$ is bistochastic and satisfies $H(\gamma \, | \, \Leb \otimes \Leb) < + \infty$, then
		\begin{equation*}
		\Gamma-\lim_{\nu \to 0} \{\Hbar_{\nu} + \Inc + \iota_{\gamma}\} = \Abar + \Inc + \iota_{\gamma}.
		\end{equation*}
	\end{enumerate}
\end{Thm}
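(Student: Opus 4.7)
The plan is to establish the $\Gamma$-convergence by the standard two-sided scheme: a $\Gamma$-liminf inequality, then an explicit recovery construction for the $\Gamma$-limsup. I would settle part \ref{item:GammaCv_no_marge} first; part \ref{item:GammaCv_with_marge} then follows essentially for free, since the recovery sequence constructed below preserves the joint marginal $P_{0,1}$ and $\iota_\gamma$ is lower semicontinuous under narrow convergence.

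For the $\Gamma$-liminf, take $P^\nu \narrowcv P$ with $\Hbar_\nu(P^\nu) + \Inc(P^\nu)$ uniformly bounded. Incompressibility of the limit $P$ passes by testing \eqref{eq:incompressible_flow} against bounded continuous functions. For the kinetic bound, I would apply Girsanov's decomposition to write each incompressible, finite-entropy $P^\nu$ as the law of a diffusion $\D X_t = v_t^\nu \D t + \sqrt{\nu}\,\D B_t$ with $X_0 \sim \Leb$, so that
$$
\Hbar_\nu(P^\nu) \;=\; \nu H(P^\nu_0 \,|\, \Leb) + \tfrac{1}{2}\E^{P^\nu}\!\left[\int_0^1 |v^\nu_t|^2\,\D t\right] \;=\; \tfrac{1}{2}\E^{P^\nu}\!\left[\int_0^1 |v^\nu_t|^2\,\D t\right],
$$
where the initial-entropy term vanishes by incompressibility. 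As $\nu \to 0$ the noise $\sqrt\nu B_t$ disappears uniformly in probability, and a joint narrow convergence argument on the pair $(P^\nu, v^\nu_t \D P^\nu \D t)$ combined with lower semicontinuity of the $L^2$-kinetic action then delivers $\Abar(P) \leq \liminf_\nu \Hbar_\nu(P^\nu)$.

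The substantive contribution is the recovery sequence for the $\Gamma$-limsup. Given an incompressible $P$ with $\Abar(P) < \infty$ and marginal $\gamma \coloneqq P_{0,1}$ of finite entropy relative to $\Leb \otimes \Leb$, I propose the stochastic deformation
$$
P^\nu \;\coloneqq\; \bigl[(\omega,\beta) \mapsto \omega + \beta\bigr]\pf \bigl(P \otimes \mathbb{B}^\nu\bigr),
$$
where $\mathbb{B}^\nu$ is the law on $C([0,1];\T^d)$ of a Brownian bridge on the torus with diffusivity $\nu$ pinned at $0$ at both endpoints, and $+$ denotes the group operation on $\T^d$. Three structural properties then follow at once: $(P^\nu)_{0,1} = \gamma$ because $\beta$ vanishes at $t=0,1$; $P^\nu \narrowcv P$ because $\beta \to 0$ uniformly in probability as $\nu \to 0$; and $P^\nu$ is incompressible because $(P^\nu)_t$ is the convolution on $\T^d$ of $P_t = \Leb$ with the bridge marginal at time $t$, and the Haar measure is invariant under this convolution. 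For a general $P$ in part \ref{item:GammaCv_no_marge} with possibly singular $\gamma$, I would first approximate $P$ by a sequence of incompressible flows $P^k$ with smooth $\gamma^k$ and $\Abar(P^k) \to \Abar(P)$, and then diagonalize.

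The delicate point, which I expect to be the main obstacle, is the quantitative entropy estimate $\Hbar_\nu(P^\nu) \leq \Abar(P) + \mathcal O(\nu)$. Disintegrating entropy along the endpoint projection yields
$$
\Hbar_\nu(P^\nu) \;=\; \nu H(\gamma \,|\, R^\nu_{0,1}) + \nu \int H\bigl(P^\nu|_{x,y}\,\big|\,R^\nu|_{x,y}\bigr)\,\D\gamma(x,y),
$$
where $R^\nu|_{x,y}$ denotes the Brownian bridge from $x$ to $y$ on the torus. Conditional on $(\omega_0,\omega_1) = (x,y)$, the measure $P^\nu|_{x,y}$ is a Gaussian-bridge mixture centered on deterministic paths $\omega$ from $x$ to $y$, so a Cameron--Martin shift yields $\nu H(P^\nu|_{x,y}\,|\,R^\nu|_{x,y}) \leq \int A(\omega)\,\D P(\omega\,|\,\omega_0=x,\omega_1=y) - \tfrac{1}{2}\dist(x,y)^2$ up to torus corrections. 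On the other hand, Laplace asymptotics of the periodic heat kernel $\tau_\nu^{\T^d}$ give $-\nu \log \tau_\nu^{\T^d}(y-x) = \tfrac{1}{2}\dist(x,y)^2 + \mathcal O(\nu|\log\nu|)$, so the two $\tfrac{1}{2}\dist^2$ contributions cancel at leading order, leaving the advertised $\mathcal O(\nu)$ residual. The torus-specific bridge and kernel statements needed here are exactly those the authors package in Appendix~\ref{section:brownian_torus}, which I would invoke as a black box. Letting $\nu \to 0$ then closes the $\Gamma$-limsup. In part \ref{item:GammaCv_with_marge}, the hypothesis $H(\gamma\,|\,\Leb\otimes\Leb) < \infty$ is precisely what makes the construction directly applicable to the given $\gamma$ without any preliminary regularization.
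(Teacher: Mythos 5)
Your overall architecture matches the paper's: a $\Gamma$-liminf by lower semicontinuity, and a $\Gamma$-limsup recovery built by adding small Brownian bridges to the paths of an admissible deterministic flow, with the entropy disintegrated along $(X_0,X_1)$ and the endpoint contribution handled via Laplace asymptotics of the periodic heat kernel. Within that shape, however, you deviate in two places that are worth flagging.

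For the $\Gamma$-liminf you invoke the Girsanov/F\"ollmer representation $\Hbar_\nu(P^\nu)=\nu H(P^\nu_0\,|\,\Leb)+\tfrac12\E^{P^\nu}\!\int_0^1|v^\nu_t|^2\,\D t$. This is the classical L\'eonard-style argument; the paper acknowledges that this route is available but deliberately presents an independent, more elementary proof based on the Legendre inequality $u\log u\geq uv-e^{v-1}$ applied to the bounded discretized action $A_N$ from \eqref{eq:defAN}, with the exponential term controlled by the Gaussian moment bound of Lemma~\ref{lem:estimation_int_AN_Bxy}. Your version is correct in spirit, but the step ``a joint narrow convergence argument on the pair $(P^\nu, v^\nu_t\,\D P^\nu\D t)$ combined with lower semicontinuity of the $L^2$-kinetic action then delivers $\Abar(P)\leq\liminf_\nu\Hbar_\nu(P^\nu)$'' is not a one-liner: $v^\nu$ is an adapted process rather than a velocity field, the paths under $P^\nu$ are not $AC^2$, and one must pass to the limit in the decomposition $X_t=X_0+\int_0^t v^\nu\,\D s+\sqrt\nu B_t$ and identify the drift of the limit as $\dot\omega_t$ $P$-a.s. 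This can be made rigorous, but it is precisely the kind of technical bookkeeping the paper circumvents by working directly with the bounded continuous observable $A_N$.

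For the $\Gamma$-limsup the substantive discrepancy is your choice of bridge. You center the perturbation on $\mathbb{B}^\nu$, the \emph{torus} Brownian bridge $R^{\nu,0,0}$, and then propose to ``invoke as a black box'' the torus lemmas of Appendix~\ref{section:brownian_torus}. But those lemmas (Definition~\ref{def:B^nu_omega} and Lemma~\ref{lem:entropy_translation_bridges_torus}) are stated for the \emph{projected} bridge $B^\nu_\omega=T_\omega\pf\Pi\pf\Bbar^\nu$, not for $T_\omega\pf R^{\nu,0,0}$, and the paper explicitly warns that these two objects differ and that using the latter ``would have made the proof of Lemma~\ref{lem:entropy_translation_bridges_torus} more delicate.'' With your choice, $T_\omega\pf R^{\nu,0,0}$ is a nontrivial $\Z^d$-mixture of shifted lifted bridges, and $R^{\nu,\omega_0,\omega_1}$ is another such mixture (Lemma~\ref{lem:brownian_bridge}); the relative entropy then splits into a mixture-weights term $\sum_l a_l\log(a_l/b_l)$ plus the fiberwise Cameron--Martin contributions, and one has to show the weights term contributes only $-\tfrac12\dist^2(\omega_0,\omega_1)+\mathcal O(\nu)$. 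This does go through (the $\nu\log\nu$ normalizations cancel, and the linear-in-$l$ term vanishes by symmetry of $a_l$), but it is not covered by the black-boxed lemmas; you would need to prove a variant of Lemma~\ref{lem:entropy_translation_bridges_torus} for the torus bridge. Finally, your treatment of part~\ref{item:GammaCv_no_marge} (``approximate $P$ by incompressible flows $P^k$ with smooth $\gamma^k$ and diagonalize'') glosses over the real obstruction, namely that one needs the value of Brenier's problem to be continuous in the marginal $\gamma$ with a converging competitor; this is exactly what the paper imports from \cite{baradat2018continuous}, and it does not follow from mollifying $\gamma$ alone.
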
 
This will be seen as a straightforward consequence of Theorem \ref{thm:general_GammaCv_process} below, and the result is stronger than convergence of the minimizers.
Note that the second part of our statement only addresses the case of a fixed marginal law $P_{0,1}=\gamma\in\P(\T^d\times\T^d)$, while the first part does not and will typically require suitable regularization $\gamma^\nu\narrowcv\gamma$.
The key step in the proof will be to build a recovery sequence $P^\nu$ by adding a Brownian bridge (with diffusivity $\nu$) to any absolutely continuous curve charged by any admissible generalized flow $P$ in the Brenier model.
We will then use a Cameron-Martin formula to compute the entropy of the resulting process.
We will also exploit the narrow continuity of the optimal action in Brenier's model $\REu(\gamma)$ with respect to $\gamma$ \cite[Thm.~1]{baradat2018continuous} to obtain a necessary and sufficient condition for a sequence $(\gamma^\nu)_{\nu>0}$ of bistochastic measures to be the marginal laws of a recovery sequence $P^\nu\narrowcv P$, see point~\ref{item:general_Gamma-limsup} in Theorem~\ref{thm:general_GammaCv_process} later on.

\bigskip
We will address next the convergence $\Sch_\nu\to\OT$ of entropic towards deterministic optimal transport:
For curves $\rho\in C([0,1];\P(\T^d))$, and given $\rho_0,\rho_1\in \P(\T^d)$, we write
\begin{equation*}
 \iota_{(\rho_0,\rho_1)}(\rho)=
\left\{\begin{aligned}
&0 & \mbox{if }\rho|_{t=0}=\rho_0 \mbox{ and } \rho|_{t=1}=\rho_1\\
&+ \infty & \mbox{else,}
\end{aligned}\right.
\end{equation*}
for the characteristic function of the endpoints constraint.
The entropy functional will be rigorously defined in section~\ref{sec:notations}, but for now let us anticipate and write $H(\mu)\coloneqq H(\mu\,|\,\Leb)$ for the entropy of $\mu\in\P(\T^d)$ computed relatively to the Lebesgue measure on the torus.
The statement then reads
\begin{Thm}
\label{thm:CV_Sch_OT}
Let $\rho_0,\rho_1\in\P(\T^d)$ such that $H(\rho_0)<+\infty$ and $H(\rho_1)<+\infty$.
Then
\begin{equation}
\label{eq:Gamma_lim_Sch_OT}
 \Gamma-\lim\limits_{\nu\to 0}\,\Big\{ \H_\nu + \iota_{(\rho_0,\rho_1)} \Big\}
 = \A + \iota_{(\rho_0,\rho_1)}
\end{equation}
for the uniform topology on $C([0,1];\P(\T^d))$.
\end{Thm}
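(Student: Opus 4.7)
My plan is to prove the $\Gamma-\liminf$ and $\Gamma-\limsup$ halves of \eqref{eq:Gamma_lim_Sch_OT} separately. The $\Gamma-\liminf$ is essentially free: since $\H_\nu = \A + \nu^2 \F \geq \A$ and $\A$ is lower semicontinuous for the uniform topology on $C([0,1];\P(\T^d))$ (as recalled just after \eqref{eq:defA}), any uniformly converging sequence $\rho^\nu \to \rho$ satisfies $\liminf_\nu \H_\nu(\rho^\nu) \geq \A(\rho)$. The endpoint constraint $\iota_{(\rho_0,\rho_1)}$ is itself lsc under uniform convergence because the time-$0$ and time-$1$ evaluations are continuous, so the $\Gamma-\liminf$ for the sum is immediate.

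For the $\Gamma-\limsup$, given an admissible $\rho \in AC^2([0,1];\P(\T^d))$ with the correct endpoints (and, by hypothesis, $H(\rho_0),H(\rho_1)<\infty$), I would take as recovery sequence the time-localized heat regularization
\begin{equation*}
\rho^\nu_t \;\coloneqq\; \tau_{\nu t(1-t)} * \rho_t, \qquad t \in [0,1].
\end{equation*}
The quadratic weight $\nu t(1-t)$ vanishes precisely at the endpoints, ensuring $\rho^\nu_0 = \rho_0$ and $\rho^\nu_1 = \rho_1$ exactly, so the endpoint constraint is preserved at every $\nu$. Uniform convergence $\sup_t \DMK(\rho^\nu_t,\rho_t) \to 0$ follows from the second-moment coupling bound $\DMK(\tau_s * \mu,\mu)^2 \leq d\, s$ together with $\nu t(1-t) \leq \nu/4$.

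The core energy estimate $\H_\nu(\rho^\nu) \leq \A(\rho) + O(\nu)$ is the content of Lemma~\ref{lem:fund}, and I would prove it as follows. Writing $s(t):=\nu t(1-t)$ and letting $c_t$ realize $\A(\rho)$, the regularized curve satisfies a continuity equation with velocity $c^\nu_t = v^\nu_t - \tfrac{s'(t)}{2}\nabla\log\rho^\nu_t$, where $v^\nu_t := \tau_{s(t)}*(\rho_t c_t)/\rho^\nu_t$. Cauchy--Schwarz on the convolution gives $\int|v^\nu_t|^2 \D\rho^\nu_t \leq \int|c_t|^2 \D\rho_t$. Differentiating $H(\rho^\nu_t)$ along this continuity equation provides the identity $\int v^\nu_t\cdot\nabla\log\rho^\nu_t\,\D\rho^\nu_t = \partial_t H(\rho^\nu_t) + 4 s'(t)F(\rho^\nu_t)$, and expanding $|c^\nu_t|^2$ then yields the pointwise bound
\begin{equation*}
\int|c^\nu_t|^2 \D\rho^\nu_t \;\leq\; \int|c_t|^2 \D\rho_t \;-\; s'(t)\,\partial_t H(\rho^\nu_t) \;-\; 2 s'(t)^2 F(\rho^\nu_t).
\end{equation*}
Integrating in $t$ and adding $\nu^2\F(\rho^\nu)$, the Fisher terms regroup with coefficient $\nu^2-s'(t)^2$, which by the algebraic miracle $\nu^2-\nu^2(1-2t)^2 = 4\nu s(t)$ equals exactly $4\nu s(t)$. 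This cancellation is what makes the whole argument work: by the standard Fisher bound $F(\tau_s*\mu)\leq C_d/s$, the remainder $4\nu\int_0^1 s(t)F(\rho^\nu_t)\D t$ is $O(\nu)$. The entropy term, integrated by parts using $s'(0)=\nu$, $s'(1)=-\nu$, $s''(t)=-2\nu$ and the nonnegativity of $H(\rho^\nu_t)$, contributes at most $\tfrac{\nu}{2}(H(\rho_0)+H(\rho_1))$. Altogether
\begin{equation*}
\H_\nu(\rho^\nu) \;\leq\; \A(\rho) + \tfrac{\nu}{2}\bigl(H(\rho_0)+H(\rho_1)\bigr) + O(\nu) \;=\; \A(\rho) + O(\nu),
\end{equation*}
and letting $\nu\to 0$ gives the $\Gamma-\limsup$. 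The main obstacle is spotting the identity $\nu^2 - s'(t)^2 = 4\nu s(t)$: it is precisely what forces the quadratic profile $s(t) = \nu t(1-t)$, which simultaneously preserves the endpoint constraints and produces the factor $s(t)$ needed to absorb the $1/s(t)$ blow-up of the Fisher information.
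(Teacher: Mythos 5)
Your proof is correct and follows the same strategy as the paper: the $\Gamma-\liminf$ is trivial from lower semicontinuity of $\A$, and the $\Gamma-\limsup$ uses the recovery sequence $\rho^\nu_t = \tau_{\nu t(1-t)}*\rho_t$ together with the quantitative estimate of Lemma~\ref{lem:fund}, whose proof you reconstruct faithfully (Jensen on the convolved momentum, differentiation of the entropy along the regularized continuity equation, integration by parts in time, and the Fisher-information bound $F(\tau_s*\mu)\lesssim 1/s$). The only cosmetic difference is that you aim directly for the unweighted estimate \eqref{eq:fund_unweight} via the identity $\nu^2-s'(t)^2=4\nu s(t)$, whereas the paper first derives the weighted estimate \eqref{eq:fund_weight} (needed later for the convexity results) and then passes to \eqref{eq:fund_unweight} via the equivalent identity $(t-1/2)^2+t(1-t)=1/4$.
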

We stress again that the result is not new \cite{mikami2004monge,leonard2012schrodinger,cuturi2013sinkhorn,carlier2017convergence}, but we will give an independent proof that is elementary and new to the best of our knowledge.
In particular, we will provide an explicit PDE construction of recovery sequences inspired from the previous probabilistic arguments and Brownian bridges.
% This procedure provides directly a recovery sequence. 
\bigskip

Our third result is the convergence $\MBro_{\nu}\to\MREu$ of the multiphase Br\"odinger problem  towards the multiphase Brenier model:
\begin{Thm}
	\label{thm:GammaCv_multiphase}
If $\Gamma$ is bistochastic in average with finite average marginal entropies as in \eqref{eq:M_initial_final_entropy_finite}, then
		\begin{equation}
		\label{eq:Gamma_lim_MBro_MREu}
		\Gamma-\lim_{\nu \to 0} \,\Big\{\HH_{\nu} + \IInc + \Iiota_{\Gamma}\Big\} = \AA + \IInc + \Iiota_{\Gamma}
		\end{equation}
for the narrow topology of $\P(C([0,1];\P(\T^d)))$.
\end{Thm}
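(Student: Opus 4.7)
}

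The plan is to follow the ``superposition'' strategy sketched in the informal overview: lift the single-phase PDE recovery sequence from Theorem~\ref{thm:CV_Sch_OT} (built via the heat-flow regularization of Lemma~\ref{lem:fund}) to the level of traffic plans by pushing $\PP$ forward through the same regularization map, exploiting the crucial fact that the heat semigroup on $\T^d$ preserves $\Leb$.

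\textbf{$\Gamma$-liminf.} Fix a narrowly converging sequence $\PP^\nu \to \PP$. Since $\nu^2 \FF \geq 0$ we have $\HH_\nu \geq \AA$, so it suffices to show that $\AA$, $\IInc$ and $\Iiota_\Gamma$ are narrowly lower semicontinuous on $\P(C([0,1];\P(\T^d)))$. For $\AA$ this is recalled just after \eqref{eq:Mfinite_action}. For $\IInc$ and $\Iiota_\Gamma$, note that for any bounded continuous test function $\varphi$ on $\T^d$ (resp. $\P(\T^d)\times\P(\T^d)$) the map $\rho \mapsto \int\varphi\,\D\rho_t$ (resp. $\rho\mapsto\varphi(\rho_0,\rho_1)$) is bounded and continuous on $C([0,1];\P(\T^d))$, so the integral against $\PP^\nu$ passes to the limit and the constraints \eqref{eq:incomressibility_MREu} and \eqref{eq:Mjoint_law} are preserved.

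\textbf{$\Gamma$-limsup: construction of recovery sequences.} Let $\PP$ be admissible for the limit functional, in particular incompressible, with marginal law $\Gamma$ and $\AA(\PP)<+\infty$. Define the regularization map
$$
\Phi^\nu : C([0,1];\P(\T^d)) \longrightarrow C([0,1];\P(\T^d)), \qquad (\Phi^\nu \rho)_t \coloneqq \tau_{\nu t(1-t)} * \rho_t,
$$
and set $\PP^\nu \coloneqq (\Phi^\nu)_\# \PP$. Three properties need to be checked:
\begin{enumerate}[label=(\alph*)]
\item \emph{Marginal law.} Since $\tau_0$ is the identity convolution, $(\Phi^\nu\rho)_0=\rho_0$ and $(\Phi^\nu\rho)_1=\rho_1$, hence the joint law of $(\rho_0,\rho_1)$ under $\PP^\nu$ is still $\Gamma$.
\item \emph{Incompressibility.} By linearity of convolution and Fubini,
$$
\int (\Phi^\nu\rho)_t\,\D\PP^\nu(\rho) = \int \tau_{\nu t(1-t)}*\rho_t\,\D\PP(\rho) = \tau_{\nu t(1-t)} * \Leb = \Leb,
$$
using precisely that the heat kernel preserves the Lebesgue measure.
\item \emph{Narrow convergence $\PP^\nu\to\PP$.} As $\nu\to 0$, $\Phi^\nu\rho\to\rho$ uniformly in $t$ for every $\rho$ (continuity of the heat semigroup at $s=0$), so for any bounded continuous $f$ on $C([0,1];\P(\T^d))$, dominated convergence yields $\int f\,\D\PP^\nu=\int f(\Phi^\nu\rho)\,\D\PP(\rho) \to \int f\,\D\PP$.
\end{enumerate}

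\textbf{Energy estimate.} Applying Lemma~\ref{lem:fund} phase-by-phase gives a pointwise bound of the form
$$
\H_\nu(\Phi^\nu\rho) \leq \A(\rho) + \nu\, r(\rho_0,\rho_1),
$$
where $r(\rho_0,\rho_1)$ is controlled by the endpoint entropies $H(\rho_0)+H(\rho_1)$ (this is the content of the single-phase recovery construction underlying Theorem~\ref{thm:CV_Sch_OT}). Integrating against $\PP$ and using the finite average marginal entropy hypothesis~\eqref{eq:M_initial_final_entropy_finite},
$$
\HH_\nu(\PP^\nu) = \int \H_\nu(\Phi^\nu\rho)\,\D\PP(\rho) \leq \AA(\PP) + \nu\,\bigl[\,\textstyle\int\!\!\int(\rho_0\log\rho_0+\rho_1\log\rho_1)\D\Gamma + C\,\bigr],
$$
so that $\limsup_{\nu\to 0}\HH_\nu(\PP^\nu)\leq \AA(\PP)$. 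The indicator terms $\IInc$ and $\Iiota_\Gamma$ vanish on $\PP^\nu$ by (a) and (b).

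\textbf{Main obstacle.} The non-routine point is that the pointwise bound on $\H_\nu(\Phi^\nu\rho)$ furnished by Lemma~\ref{lem:fund} must be dominated by a $\PP$-integrable function so that the $O(\nu)$ defect truly survives Fubini. This is exactly where the average marginal entropy assumption~\eqref{eq:M_initial_final_entropy_finite} is used: it is precisely the hypothesis needed to promote the single-phase recovery sequence of Theorem~\ref{thm:CV_Sch_OT} to the multiphase setting by linearity. The remaining verifications (narrow convergence of the pushforward, preservation of incompressibility) are direct consequences of elementary properties of $\Phi^\nu$ and linearity, and do not require any further analysis of the unconstrained measures.
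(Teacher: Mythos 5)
Your proposal is correct and follows essentially the same strategy as the paper: for the $\Gamma$-liminf you use $\HH_\nu = \AA + \nu^2\FF \geq \AA$ together with lower semicontinuity, and for the $\Gamma$-limsup you build the recovery sequence $\PP^\nu = (\Phi^\nu)_\#\PP$ via the heat-flow regularization $\Phi^\nu\rho = (\tau_{\nu t(1-t)}*\rho_t)_t$, verify preservation of the marginal law, incompressibility, and narrow convergence, and invoke Lemma~\ref{lem:fund} (eq.~\eqref{eq:fund_unweight}) phase-by-phase before integrating against $\PP$ and using \eqref{eq:M_initial_final_entropy_finite}. The only minor difference is cosmetic: for the narrow convergence $\PP^\nu\narrowcv\PP$, you argue via pointwise convergence $f(\Phi^\nu\rho)\to f(\rho)$ plus dominated convergence (which is slightly more elementary), whereas the paper first establishes the uniform estimate $\DMK(\rho_t,\rho_t*\tau_s)\leq C_d\sqrt s$ and then invokes \cite[Lem.~5.2.1]{ambrosio2006gradient}; in both cases the key point is the uniformity in $t$ of the heat-flow perturbation, which you assert (``uniformly in $t$'') but leave to ``continuity of the heat semigroup at $s=0$'' without exhibiting the explicit $\sqrt{\nu}$ bound.
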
 
Here we write as before $\Iiota_{\Gamma}(\PP)$ and $\IInc(\PP)$ for the characteristic functions of the marginal and incompressibility constraints, \eqref{eq:Mjoint_law} and \eqref{eq:incomressibility_MREu}, respectively.
Again, this result is stronger than convergence of the minimizers.

Given that both problems include an incompressibility constraint, a natural question to ask is whether the associated Lagrange multipliers converge as well, \emph{i-e} whether the Br\"odinger pressure $p^\nu$ converges towards the Brenier pressure $p$.
The answer to that question is positive, but in order to make a rigorous statement we first need to introduce the following functional space:
\begin{Def}
\label{defi:functional_space_G}
	Let $C^{1, 2}([0,1] \times \T^d)$ denote the space of functions having continuous time-derivative and continuous second order space-derivatives.
	For such a function $f \in C^{1, 2}([0,1] \times \T^d)$, we say that $f \in \mathcal{G}$ if in addition:
	\begin{itemize}
		\item for all $x \in \T^d$, $f(0,x) = f(1,x) = 0$,
		\item for all $t \in [0,1]$, 
		\begin{equation*}
		\int_{\T^d} f(t,x) \D x = 0.
		\end{equation*}
	\end{itemize}
	Endowed with its natural $C^{1,2}$ norm, $\mathcal G$ is a Banach space.
\end{Def}
Our result is the following:
\begin{Thm}
\label{thm:CV_pressures}
Take $\Gamma$ bistochastic in average and satisfying \eqref{eq:M_initial_final_entropy_finite}.
For all $\nu>0$ let $p^{\nu}$ be the pressure field associated with $\MBro_{\nu}(\Gamma)$, and let $p$ be the pressure field associated with $\MREu(\Gamma)$ (both being defined in Theorem~\ref{thm:existence_pressure}).
Then
\begin{equation*}
p^{\nu} \underset{\nu \to 0}{\narrowcv} p \quad \mbox{in }\mathcal{G}'
\end{equation*}
for the weak-$\ast$ convergence on the topological dual $\mathcal{G}'$ of $\mathcal{G}$.
\end{Thm}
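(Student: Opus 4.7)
\textbf{Reformulation via perturbed value functions.} For $\varphi\in\mathcal{G}$ with $\|\varphi\|_\infty<1$, introduce the perturbed value functions
$$V^\nu(\varphi)\coloneqq\inf\Bigl\{\HH_\nu(\QQ):\QQ_{0,1}=\Gamma,\ \textstyle\int\rho_t\,\D\QQ(\rho)=(1+\varphi_t)\Leb\Bigr\},$$
and analogously $V(\varphi)$ for the multiphase Brenier problem with $\AA$ in place of $\HH_\nu$. Both are convex in $\varphi$, and Theorem~\ref{thm:existence_pressure} identifies $p^\nu,p$ as their unique subgradients at $0$, meaning $V^\nu(\varphi)\geq V^\nu(0)+\langle p^\nu,\varphi\rangle$ and $V(\varphi)\geq V(0)+\langle p,\varphi\rangle$ for all admissible $\varphi$. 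Uniqueness of $p$ forces $V$ to be G\^ateaux differentiable at $0$ with derivative $p$, so it suffices to pass to the limit in these affine minorants.

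\textbf{Perturbed $\Gamma$-limsup.} The key technical step is to show $\limsup_\nu V^\nu(\varphi)\leq V(\varphi)$ for each $\varphi$ in a neighborhood of $0$. For any competitor $\QQ$ of $V(\varphi)$ with $\AA(\QQ)<+\infty$, I would build $\QQ^\nu$ admissible for $V^\nu(\varphi)$ with $\HH_\nu(\QQ^\nu)\to\AA(\QQ)$ in two stages. First, apply the curve-by-curve heat smoothing of Lemma~\ref{lem:fund} as in the proof of Theorem~\ref{thm:GammaCv_multiphase}: this yields an intermediate $\QQ^{\nu,1}$ with $\HH_\nu(\QQ^{\nu,1})\to\AA(\QQ)$, but with average density $(1+\tau_{\nu t(1-t)}*\varphi_t)\Leb$ rather than the prescribed $(1+\varphi_t)\Leb$. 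Second, correct this via a Moser-type diffeomorphism $\Phi^\nu_t$, obtained as the flow of a smooth vector field $\xi^\nu_t$ solving an elliptic equation driven by the defect $\varphi_t-\tau_{\nu t(1-t)}*\varphi_t$. Because $\varphi$ vanishes at $t=0,1$ so does the defect, hence $\Phi^\nu_t$ can be chosen equal to $\Id$ at both endpoints; pushing each curve forward by $\Phi^\nu_t$ thus preserves the joint law $\Gamma$, produces the exact density $(1+\varphi_t)\Leb$, and costs only $o(1)$ in both $\AA$ and $\FF$ thanks to elliptic estimates on $\xi^\nu$.

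\textbf{Compactness and identification of the limit.} Combined with Theorem~\ref{thm:GammaCv_multiphase} (which gives $V^\nu(0)\to V(0)$), Step~1 implies $\limsup_\nu V^\nu(\varphi)\leq V(\varphi)<+\infty$ on a small $\mathcal{G}$-ball around $0$. Since the $V^\nu$ are convex and uniformly bounded above on this ball, a standard convex rescaling argument combined with the subgradient inequality gives $\sup_\nu\|p^\nu\|_{\mathcal{G}'}<+\infty$. Banach--Alaoglu on the separable Banach space $\mathcal{G}$ then yields a subsequence $p^{\nu_k}\narrowcv q$ in $\mathcal{G}'$. To identify $q$: for fixed $\varphi\in\mathcal{G}$ and $t>0$ small, the subgradient inequality at $t\varphi$ gives $t\langle p^{\nu_k},\varphi\rangle\leq V^{\nu_k}(t\varphi)-V^{\nu_k}(0)$. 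Passing to $k\to\infty$ using Step~1 yields $t\langle q,\varphi\rangle\leq V(t\varphi)-V(0)$; dividing by $t$ and letting $t\to 0^+$ gives $\langle q,\varphi\rangle\leq V'(0^+;\varphi)=\langle p,\varphi\rangle$, and repeating with $-\varphi$ provides the reverse inequality. Hence $q=p$, and uniqueness of the weak-$*$ limit forces the whole sequence $p^\nu$ to converge.

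\textbf{Main obstacle.} The delicate piece is the Fisher contribution in Step~1: under diffeomorphic pushforward by $\Phi^\nu_t$, the Fisher information $\F$ transforms with additive correctors involving $\nabla\log|\det D\Phi^\nu_t|$, which are not automatically negligible. The crucial quantitative input is that the heat-convolution defect $\tau_{\nu t(1-t)}*\varphi_t-\varphi_t$ tends to $0$ not merely in $C^0$ but in norms strong enough to guarantee $\xi^\nu$ and hence $\Phi^\nu_t-\Id$ vanish in $C^1$ as $\nu\to 0$, so that the Jacobian correctors in the Fisher term are indeed $o(1)$.
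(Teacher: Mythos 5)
Your Step~2 — the perturbed $\Gamma$-limsup $\limsup_\nu V^\nu(\varphi)\leq V(\varphi)$ via a Moser-type correction — contains a genuine gap which you yourself flag as the ``main obstacle'': controlling the Fisher contribution under the pushforward by $\Phi^\nu_t$. This is not a routine estimate to defer. The curve-by-curve heat smoothing gives at best $\nu^2\FF(\QQ^{\nu,1})=\mathcal O(1)$, i.e.\ the Fisher information of the regularized plan may be as large as $\mathcal O(\nu^{-2})$. Pushing forward by a diffeomorphism with $\Phi^\nu_t-\Id=\mathcal O(\nu)$ in $C^1$ produces correctors of the schematic form $\int\nabla\log|\det D\Phi^\nu_t|\cdot\nabla\log\rho^\nu_t\,\D\rho^\nu_t$, which by Cauchy--Schwarz is only $\mathcal O(\nu)\cdot\mathcal O(\nu^{-1})=\mathcal O(1)$, not automatically $o(1)$. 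Making this work would require tracking the defect $\varphi_t-\tau_{\nu t(1-t)}*\varphi_t$ in a $C^2$-type norm with quantified $\nu$-dependence, as well as a careful expansion of $\FF$ under pushforward; none of that is sketched. As written, the decisive step is asserted rather than proved.

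The paper's proof avoids this entirely by exploiting that the subgradient inequality \eqref{eq:pressure_nu_ML} in Theorem~\ref{thm:existence_pressure} holds for \emph{every} traffic plan $\QQ$ with $\rho^\QQ-1\in\mathcal E_0$, not only for those matching the exact target $(1+\varphi)\Leb$. One therefore takes $\QQ^\nu\coloneqq\Phi^\nu{}_\#\QQ$ with its ``wrong'' density $\rho^{\QQ^\nu}-1=\varphi^\nu=\Phi^\nu\varphi$, applies \eqref{eq:pressure_nu_ML} with $\varphi^\nu$ on the right, and uses that $\varphi^\nu\to\varphi$ strongly in $\mathcal E_0$ together with $p^\nu\narrowcv p^*$ weakly-$*$ to pass to the limit in the pairing $\langle p^\nu,\varphi^\nu\rangle$. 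No Moser correction and hence no Fisher issue arises. Likewise, the uniform bound on $\|p^\nu\|_{\mathcal E_0'}$ (and hence $\mathcal G'$) needed for Banach--Alaoglu is already furnished by estimate \eqref{eq:estimate_pressure}; your convex rescaling re-derivation would be superfluous. Your overall compactness-plus-identification architecture (Step~3) is sound conditional on Step~2, but you should realize that the exact density constraint on the recovery sequence is not needed, which dissolves the obstacle rather than confronting it.
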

\bigskip

Finally, we will investigate the time-convexity of the entropy $H=H(\bullet\,|\,\Leb)$ along solutions to (some of) the previous problems.
Note that, for the one-phase problems $\REu,\Bro$, the incompressibility constraint $\rho_t=\Leb$ forces the entropy to be constant in time $H(\rho_t)\equiv H(\Leb)=0$, so nothing interesting can be said there.
We will give a new independent proof of the convexity along the interpolations $\OT,\Sch_\alpha$:
\begin{Prop}
 \label{prop:convexity_OT_Sch}
 Let $\rho_0,\rho_1\in\P(\T^d)$ have finite entropies $H(\rho_0),H(\rho_1)<\infty$, and let $\rho$ be a solution of $\OT(\rho_0,\rho_1)$ or $\Sch_\alpha(\rho_0,\rho_1)$ for fixed diffusivity $\alpha>0$.
 Then $t\mapsto H(\rho_t)$ is convex.
\end{Prop}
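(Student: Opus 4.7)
The strategy follows the overall variational philosophy of the paper: start from the optimal curve $\rho$ and use the Eulerian heat-kernel regularization introduced in Lemma~\ref{lem:fund} as a perturbation. I will focus on the Schr\"odinger case $\Sch_\alpha(\rho_0,\rho_1)$ with $\alpha>0$ fixed; the $\OT$ case is the degenerate $\alpha=0$ of the same computation, or can alternatively be recovered by passing to the limit $\alpha\to 0$ via Theorem~\ref{thm:CV_Sch_OT} together with the stability of convexity under pointwise convergence.

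Fix an arbitrary nonnegative test function $\phi\in C^2_c((0,1))$ and set, for $\nu>0$,
$$\rho^\nu_t := \tau_{\nu\phi(t)}*\rho_t.$$
Since $\phi(0)=\phi(1)=0$, we have $\rho^\nu_0=\rho_0$ and $\rho^\nu_1=\rho_1$, so $\rho^\nu$ is an admissible competitor for $\Sch_\alpha(\rho_0,\rho_1)$. By optimality of $\rho$,
$$\H_\alpha(\rho^\nu) - \H_\alpha(\rho) \geq 0 \qquad \mbox{for every }\nu>0.$$
The heart of the argument is to produce a first-order expansion of the defect of the form
$$\H_\alpha(\rho^\nu) - \H_\alpha(\rho) = -\frac{\nu}{2}\int_0^1 \phi'(t)\,\frac{d}{dt}H(\rho_t)\,dt + o(\nu) \qquad \mbox{as }\nu\to 0.$$
This expansion is essentially the variational reverse of Lemma~\ref{lem:fund}. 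To derive it, I would observe that $\rho^\nu$ solves the continuity equation with velocity
$$v^\nu_t = \frac{\tau_{\nu\phi(t)}*(\rho_t v_t)}{\rho^\nu_t} - \frac{\nu\phi'(t)}{2}\,\nabla\log\rho^\nu_t,$$
where $v_t$ is the optimal velocity for $\rho$; the second term is the osmotic drift that appears because the smoothing scale $\nu\phi(t)$ varies in time. Joint convexity of $(q,\mu)\mapsto |q|^2/\mu$ shows that the convolved-momentum piece of $\A(\rho^\nu)$ deviates from $\A(\rho)$ only at order $o(\nu)$, while the cross term between the convolved momentum and the osmotic correction produces exactly $-\tfrac{\nu}{2}\phi'(t)\int v_t\cdot\nabla\log\rho_t\,d\rho_t=-\tfrac{\nu}{2}\phi'(t)\dot{h}(t)$ at each time, where $h(t):=H(\rho_t)$. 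The remaining pieces (quadratic in the osmotic correction, and the first-order variation of the Fisher term $\alpha^2\F(\rho^\nu)$) are either absorbed into $o(\nu)$ or cancel via the hydrodynamic optimality condition satisfied by the Schr\"odinger interpolation.

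Once the expansion is secured, dividing by $\nu>0$ and letting $\nu\to 0$ yields
$$-\int_0^1 \phi'(t)\,\dot{h}(t)\,dt \geq 0.$$
Integration by parts in $t$, using $\phi(0)=\phi(1)=0$, rewrites this (in the distributional sense) as $\int_0^1 \phi(t)\,\ddot{h}(t)\,dt \geq 0$. Since $\phi\in C^2_c((0,1))$ with $\phi\geq 0$ is otherwise arbitrary, we conclude $\ddot{h}\geq 0$ in $\mathcal{D}'((0,1))$, which is precisely the convexity of $t\mapsto H(\rho_t)$. The main obstacle is the rigorous first-order expansion of the Schr\"odinger defect, in particular the handling of the Fisher contribution at order $\nu$. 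The endpoint assumptions $H(\rho_0),H(\rho_1)<+\infty$ ensure that $h(t)$ is continuous up to $t=0,1$ and that no boundary terms spoil the time integration by parts. In the $\OT$ case the Fisher issue disappears entirely and the expansion follows from rather direct manipulations of the Benamou-Brenier action.
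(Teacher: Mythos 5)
Your strategy is the right one, and close in spirit to the paper's: perturb the optimal curve by a time-dependent heat-flow smoothing and read the sign of the first-order defect. The paper works with the single profile $\phi(t)=t(1-t)$ (encoded in Lemma~\ref{lem:fund}), gets the Hermite--Hadamard inequality $\int_0^1H(\rho_t)\,\D t\leq\frac12[H(\rho_0)+H(\rho_1)]$ by Fatou, and then repeats the argument on every subinterval $[t_0,t_1]$ (the restriction of a minimizer is a minimizer) to conclude convexity from these mean-value bounds together with narrow continuity and lower semicontinuity of $H$. Your variant with a general nonnegative $\phi\in C^2_c((0,1))$, landing directly on $h''\geq 0$ in $\mathcal D'((0,1))$, is a legitimately cleaner way to finish, and your velocity decomposition is the right one.

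However, two steps in your proposal do not hold as stated. First, the first-order \emph{equality}
\[
\H_\alpha(\rho^\nu)-\H_\alpha(\rho)=-\frac{\nu}{2}\int_0^1\phi'(t)\,\dot h(t)\,\D t+o(\nu)
\]
cannot be proved, and is not needed. Joint convexity of $(q,\mu)\mapsto|q|^2/\mu$ gives only the one-sided Jensen bound $\frac12\int_0^1\int|\hat v^\nu_t|^2\,\D\rho^\nu_t\,\D t\leq\A(\rho)$, and the Benamou--Brenier characterization gives only $\A(\rho^\nu)\leq\frac12\int_0^1\int|v^\nu_t|^2\,\D\rho^\nu_t\,\D t$; there is no matching lower bound (convolution can strictly decrease the kinetic energy), so you cannot assert that the convolved-momentum piece deviates from $\A(\rho)$ by only $o(\nu)$. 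Combining the two one-sided bounds, expanding $|\hat v^\nu_t|^2=\left|v^\nu_t+\tfrac{\nu\phi'(t)}{2}\nabla\log\rho^\nu_t\right|^2$, identifying $\int v^\nu_t\cdot\nabla\log\rho^\nu_t\,\D\rho^\nu_t=\frac{d}{dt}H(\rho^\nu_t)$, and integrating by parts in time (both $\phi$ and $\phi'$ vanish at $0,1$), what one actually gets is the one-sided estimate
\[
\H_\alpha(\rho^\nu)-\H_\alpha(\rho)\leq\frac{\nu}{2}\int_0^1\phi''(t)H(\rho^\nu_t)\,\D t-\frac{\nu^2}{8}\int_0^1\phi'(t)^2\int|\nabla\log\rho^\nu_t|^2\,\D\rho^\nu_t\,\D t,
\]
which, together with $\H_\alpha(\rho^\nu)\geq\H_\alpha(\rho)$, yields $\int_0^1\phi''(t)H(\rho^\nu_t)\,\D t\geq 0$ --- and that is all you need. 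Second, the Fisher term does not ``cancel via the hydrodynamic optimality condition'': its first variation along the heat flow is generically an honest $O(\nu)$ quantity, and no Euler--Lagrange cancellation is available for this particular one-parameter family. The correct, and simpler, fact (already used in the paper's \eqref{eq:fund_weight_Halpha}) is that the Fisher information is nonincreasing under the heat flow, so $\F(\rho^\nu_t)\leq\F(\rho_t)$ pointwise in $t$; since you only need an upper bound on $\H_\alpha(\rho^\nu)$, that sign is exactly what you want. Finally, before taking $\nu\to 0$ you should first secure $t\mapsto H(\rho_t)\in L^1(0,1)$ (the paper does so precisely through the $\phi(t)=t(1-t)$ computation); with these fixes your general-$\phi$ argument closes.
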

For the optimal transport problem this is nothing but R.~McCann's celebrated \emph{displacement convexity} \cite{mccann1997convexity}, and the convexity for the Schr\"odinger problem is also known from \cite{leonard2017convexity}.
However, we would like to stress that our unified proof is elementary, purely variational, and exploits neither prior knowledge on - nor particular structure of - the minimizers.

In the multiphase settings $\MREu,\MBro_\alpha$ we will establish
\begin{Prop}
\label{prop:convexity_MREu_MBro}
Let $\Gamma$ be bistochastic in average with finite marginal entropies as in \eqref{eq:M_initial_final_entropy_finite}, and let $\PP$ be \emph{any} solution to $\MREu(\Gamma)$ or $\MBro_\alpha(\Gamma)$ for $\alpha>0$.
Then the average entropy
\begin{equation*}
 t\mapsto \int H(\rho_t)\D \PP(\rho)
\end{equation*}
is convex.
\end{Prop}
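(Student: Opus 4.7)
The plan is to build, from any optimal $\PP$, a family of admissible competitors obtained by noising each curve in the support of $\PP$, and extract the convexity of the average entropy from a first-order expansion of the action. Fix a nonnegative test function $\eta\in C_c^\infty((0,1))$ and $\nu>0$ small. Define $\Phi^\nu:\rho\mapsto\rho^\nu$ by $\rho^\nu_t\coloneqq\tau_{\nu\eta(t)}*\rho_t$ and set $\PP^\nu\coloneqq (\Phi^\nu)\pf\PP$. Since $\eta(0)=\eta(1)=0$, the joint endpoint law of $\PP^\nu$ equals that of $\PP$, so the marginal constraint $\Gamma$ is preserved. The incompressibility constraint holds because the heat flow commutes with the $\PP$-average and fixes $\Leb$:
\begin{equation*}
\int\rho^\nu_t\,\D\PP^\nu(\rho^\nu) = \tau_{\nu\eta(t)}*\int\rho_t\,\D\PP(\rho) = \tau_{\nu\eta(t)}*\Leb = \Leb.
\end{equation*}

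Let $v_t$ be the optimal velocity of $\rho$, so $\partial_t\rho + \Div(\rho v) = 0$. A direct computation shows that $\rho^\nu$ solves the continuity equation with the (non-optimal) velocity $v^\nu$ determined by $\rho^\nu_t v^\nu_t = \tau_{\nu\eta(t)}*(\rho_t v_t) - \tfrac{\nu\eta'(t)}{2}\nabla\rho^\nu_t$. Expanding $|\rho^\nu v^\nu|^2/\rho^\nu$, controlling the leading term via the joint convexity of $(a,b)\mapsto|a|^2/b$ (Jensen for the heat convolution gives $|\tau_{\nu\eta}*(\rho v)|^2/\rho^\nu\leq\tau_{\nu\eta}*(\rho|v|^2)$), rewriting the cross term using the identity $\int v_t\cdot\nabla\rho_t\,\D x = \frac{d}{dt}H(\rho_t)$ followed by an integration by parts in $t$ (legal since $\eta\in C_c^\infty((0,1))$), and absorbing the last quadratic term into a $\nu^2$ Fisher-information contribution, one should obtain
\begin{equation*}
\A(\rho^\nu) \leq \A(\rho) + \frac{\nu}{2}\int_0^1\eta''(t)H(\rho_t)\,\D t + O(\nu^2).
\end{equation*}
Averaging against $\PP$ produces the same estimate for $\AA(\PP^\nu)$ with $h(t)\coloneqq\int H(\rho_t)\,\D\PP(\rho)$ replacing $H(\rho_t)$. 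For the $\MBro_\alpha$ case, monotonicity of the Fisher information along the heat flow gives $\F(\rho^\nu)\leq\F(\rho)$ and hence $\FF(\PP^\nu)\leq\FF(\PP)$, so the same upper bound holds with $\HH_\alpha$ in place of $\AA$.

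Optimality of $\PP$ provides the reverse inequality $\AA(\PP)\leq\AA(\PP^\nu)$ (resp. $\HH_\alpha(\PP)\leq\HH_\alpha(\PP^\nu)$), so that dividing by $\nu$ and sending $\nu\to 0^+$ yields
\begin{equation*}
\int_0^1\eta''(t)h(t)\,\D t\geq 0 \quad\text{for every nonneg }\eta\in C_c^\infty((0,1)),
\end{equation*}
which is precisely distributional convexity $h''\geq 0$. Combined with the finite boundary entropies~\eqref{eq:M_initial_final_entropy_finite} this upgrades $h$ to a finite classical convex function on $[0,1]$. The main technical difficulty will be the uniform control of the $O(\nu^2)$ remainder across the support of $\PP$, which requires some second-order regularity of $(\rho_t,v_t)$: in the $\MBro_\alpha$ setting this is available for free from $\FF(\PP)<\infty$, but in the $\MREu$ case it will likely demand a preliminary heat-flow regularization of the curves of $\PP$ (in the spirit of Lemma~\ref{lem:fund}) followed by a passage to the limit that relies on lower semicontinuity of $H$ with respect to narrow convergence.
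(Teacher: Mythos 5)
Your overall plan — build a competitor by running the heat flow along each phase, invoke optimality, and extract the convexity of $h(t)=\int H(\rho_t)\D\PP(\rho)$ from the first-order term in $\nu$ — is exactly the paper's strategy. The one genuine difference is your choice of a general nonnegative profile $\eta\in C^\infty_c((0,1))$, which lets you read off distributional convexity $h''\geq 0$ directly, whereas the paper fixes $\eta(t)=t(1-t)$ and then re-runs the argument on arbitrary subintervals $[t_0,t_1]$. Both routes are viable, but there are two points where your write-up either misses the key structural fact or glosses over a real subtlety.

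First, your worry about the $O(\nu^2)$ remainder is unfounded, and in fact signals a sign-bookkeeping error in the expansion. Write $v^\nu_t$ for the true velocity of $\rho^\nu_t$, i.e. $\rho^\nu_t v^\nu_t = \tau_{\nu\eta(t)}*(\rho_t v_t) - \frac{\nu\eta'(t)}{2}\nabla\rho^\nu_t$, as you do. The cross term you want to integrate by parts in $t$ is $\int v^\nu_t\cdot\nabla\log\rho^\nu_t\,\D\rho^\nu_t = \frac{\D}{\D t}H(\rho^\nu_t)$, involving the \emph{true} velocity $v^\nu_t$, not the Jensen-regularized $\hat c^\nu_t\coloneqq(\tau_{\nu\eta}*(\rho v))/\rho^\nu$. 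When this replacement is made, the quadratic term in the expansion of $|\hat c^\nu_t|^2=|v^\nu_t+\frac{\nu\eta'}{2}\nabla\log\rho^\nu_t|^2$ lands on the \emph{left-hand} side with a positive sign, producing
\begin{equation*}
\A(\rho^\nu) + \frac{\nu^2}{8}\int_0^1 \eta'(t)^2\int|\nabla\log\rho^\nu_t|^2\,\D\rho^\nu_t\,\D t
\;\leq\;
\A(\rho) + \frac{\nu}{2}\int_0^1\eta''(t)\,H(\rho^\nu_t)\,\D t,
\end{equation*}
with \emph{no error term at all}. The $\nu^2$ Fisher contribution is a bonus you can discard, not a remainder requiring second-order regularity of $(\rho_t,v_t)$. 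This is exactly what the paper's Lemma~\ref{lem:fund} encapsulates, and it is the reason the argument works for \emph{any} solution of $\MREu$ without any regularization of the competitor beyond the heat flow itself.

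Second, the correct first-order coefficient is $H(\rho^\nu_t)$, not $H(\rho_t)$, and passing from $h^\nu(t)\coloneqq\int H(\rho^\nu_t)\,\D\PP(\rho)$ to $h(t)$ in the inequality $\int\eta'' h^\nu\geq 0$ is not a Fatou argument, since $\eta''$ changes sign. You would need genuine $L^1$-convergence of $h^\nu$ to $h$: this holds because $0\leq h^\nu(t)\leq h(t)$ (the heat semigroup decreases the entropy) while $\liminf_\nu h^\nu(t)\geq h(t)$ by lower semicontinuity, and one closes with dominated convergence — but this requires knowing $h\in L^1(0,1)$ a priori. The paper obtains this for free from its choice $\eta(t)=t(1-t)$: since $\eta'(0),\eta'(1)\neq0$ the boundary terms of the integration by parts do \emph{not} vanish and instead produce the explicit finiteness bound $\int_0^1 h(t)\,\D t\leq\frac{h(0)+h(1)}{2}<\infty$, which is exactly what your compactly-supported $\eta$ forgoes. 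So your approach works, but it would need either a preliminary run with a non-compactly-supported profile to establish $h\in L^1$, or an appeal to the paper's Lemma~\ref{lem:fund} first.
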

The convexity for $\MREu$ was conjectured by Y. Brenier in \cite{brenier2003extended}.
This was recently proved by H. Lavenant in \cite{lavenant2017time} for particular solutions only (roughly speaking, solutions with minimal entropy in some integral sense), while we stress that our statement holds for \emph{any} solution.
For $\MBro$ the result is completely new.

%
%%%%%%%%%%%%%%%%%%%%%%%%%%%%%%%%%%%%%%%%%%%%%%%%%%%%%%%%%%%%%%%%%%%%%%%%%%%%%%%%%%%%%%%%%%%%%%%%%%%%
%%%%%%%%%%%%%%%%%%%%%%%%%%%%%%%%%%%%%%%%%%%%%%%%%%%%%%%%%%%%%%%%%%%%%%%%%%%%%%%%%%%%%%%%%%%%%%%%%%%%
\subsection{Notations and preliminary results}
\label{sec:notations}
%%%%%%%%%%%%%%%%%%%%%%%%%%%%%%%%%%%%%%%%%%%%%%%%%%%%%%%%%%%%%%%%%%%%%%%%%%%%%%%%%%%%%%%%%%%%%%%%%%%%
\paragraph{Canonical processes.}
In the Brenier model and in the Br\"odinger problem we are dealing with generalized flows, which are probability measures on $C([0,1]; \T^d)$.
We will denote by $X = (X_t)_{t \in [0,1]}$ the canonical process on this space. 
Put differently, for all $t \in [0,1]$, the random variable $X_t$ is the evaluation map at time $t$:
\begin{equation*}
X_t: \omega \in C([0,1]; \T^d) \quad \mapsto \quad \omega_t \in \T^d.
\end{equation*} 
Likewise, in the multiphase Brenier and Br\"odinger models, we are dealing with traffic plans, \emph{i-e} with probability measures on $C([0,1]; \P(\T^d))$.
We will denote by $\XX = (\XX_t)_{t \in [0,1]}$ the canonical process on this space. For all $t \in [0,1]$, $\XX_t$ is the evaluation map at time $t$:
\begin{equation*}
\XX_t: \rho \in C([0,1]; \P(\T^d)) \quad \mapsto \quad \rho_t \in \P(\T^d).
\end{equation*} 
\paragraph{Push-forward and disintegration.}
If $\mathcal{X}$ and $\mathcal{Y}$ are two Polish spaces, $p$ is a Borel measure on $\mathcal{X}$, and $\Phi: \mathcal{X} \to \mathcal{Y}$ is measurable, we will denote by $\Phi \pf p$ the push-forward of $p$ by $\Phi$, \emph{i-e} the law of $\Phi$ under $p$.

When there is no ambiguity on the map $\Phi$ to be used, we simply denote by $p^y$ the conditional law $p(\bullet\,|\, \Phi=y) \in \P(\mathcal{X})$.
By virtue of the disintegration theorem, $p^y$ is well defined for $\Phi \pf p$-almost every $y \in \mathcal{Y}$, and concentrates on the fiber $\Phi^{-1}(y)$.
We recall that, by definition, if $\varphi$ is a test function on $\mathcal{X}$ then
\begin{equation}
\label{eq:disintegration}
\int_{\mathcal X} \varphi(x) \D p(x) = \int_{\mathcal Y} \left\{ \int_{\Phi^{-1}(y)} \varphi(x) \D p^y(x)\right\} \D\, (\Phi\pf p) (y),
\end{equation} 
or equivalently:
\begin{equation}
\label{eq:disintegration_measure}
p = \int_{\mathcal{Y}} p^y \D\, (\Phi\pf p) (y).
\end{equation}
If $P$ is the law of a process on $\T^d$ or $\R^d$ (that is, an element of $\P(C[0,1]; \T^d)$ or $\P(C[0,1]; \R^d)$), and if the map $\Phi = X_t$ is the evaluation at time $t$ defined above, we will write $P_t \coloneqq  X_t {}\pf P$ for the marginal of $P$ at time $t$.
Following the standard notations, $P_{0,1}$ will stand for the joint law $(X_0, X_1)\pf P$, and $P^{x,y}$ will refer to the conditional law $P(\bullet\, |\, X_0 = x, X_1=y)$.
These laws will frequently have their diffusivity as a superscript (typically $P^{\nu}$).
In that case, we write
\begin{equation*}
P^{\nu, x, y} \coloneqq  P^{\nu}(\bullet\,|\, X_0 = x, X_1 = y).
\end{equation*}
With these notations, the marginal constraint \eqref{eq:gamma_endpoint} can be reformulated as $P_{0,1} = \gamma$ and the incompressibility \eqref{eq:incompressible_flow} reads $P_t = \Leb$.

Similarly, if $\PP$ is a traffic plan and $t \in [0,1]$, we will write $\PP_t \coloneqq  \XX_t {}\pf \PP$ for the time-$t$ marginal, $\PP_{0,1}$ will stand for the joint law $(\XX_0, \XX_1)\pf \PP$, and $\PP^{\rho_0,\rho_1}$ will refer to the conditional law $P(\bullet \,|\, \XX_0 = \rho_0, \XX_1=\rho_1)$.
These laws will frequently have their diffusivity as a superscript (typically $\PP^{\nu}$).
In that case, we write
\begin{equation*}
\PP^{\nu, \rho_0, \rho_1} \coloneqq  \PP^{\nu}(\bullet\,|\, \XX_0 = \rho_0, \XX_1 = \rho_1).
\end{equation*}
With these notations, \eqref{eq:Mjoint_law} rewrites $\PP_{0,1} = \Gamma$ and the generalized incompressibility \eqref{eq:incomressibility_MREu} reads $\int \rho \D \PP_t(\rho) = \Leb$.
%
%%%%%%%%%%%%%%%%%%%%%%%%%%%%%%%%%%%%%%%%%%%%%%%%%%%%%%%%%%%%%%%%%%%%%%%%%%%%%%%%%%%%%%%%%%%%%%%%%%%%
%
\paragraph{The relative entropy.}
If $\mathcal{X}$ is a Polish space, $r$ is a reference positive and finite Radon measure on $\mathcal{X}$, and $p$ is a Borel probability measure on $\mathcal{X}$, the relative entropy of $p$ with respect to $r$ (also known as the Kullback-Leibler divergence) is defined by
\begin{equation*}
H(p\,|\,r) \coloneqq  \left\{ \begin{aligned}
&\int \log \rho \D p = \int \rho \log \rho \D r &&\mbox{if } p \ll r \mbox{ and } \rho=\frac{\D p}{\D r},\\
&+\infty &&\mbox{else}. 
\end{aligned} \right.
\end{equation*}
Jensen's inequality applied to the convex function $\eta \mapsto  \eta \log \eta$ implies the lower bound $H(p\,|\,r) \geq - \log r(\mathcal{X})$ (which is $0$ when $r$ is a probability measure).
In Section~\ref{section:GammaCv_process}, we will need several elementary results about the relative entropy, listed here without proofs.
The first one concerns the change of reference measure.
\begin{Prop}
	\label{prop:entropy_wrt_fR}
	Let $r$ and $p$ be as above and let $f \in L^1(\mathcal{X}, r)$ be nonnegative and $p$-almost surely positive.
	Then
	\begin{equation*}
	H(p\,|\,f\cdot r)  = H(p\,|\,r) - \int \log f \D p.
	\end{equation*}
\end{Prop}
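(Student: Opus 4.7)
The plan is to reduce the identity to a direct Radon--Nikodym chain-rule computation, and then check that the book-keeping for possibly infinite integrals is consistent.

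First, I would dispose of the degenerate case. If $p \not\ll r$, then since $f \cdot r \ll r$ one also has $p \not\ll f \cdot r$, so by definition of the relative entropy both sides equal $+\infty$ and there is nothing to prove. Hence I may assume $p \ll r$ and set $\rho \coloneqq \frac{\D p}{\D r}$, so that $H(p\,|\,r) = \int \log \rho \,\D p$.

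Next, I exploit the positivity assumption. Let $N \coloneqq \{x \in \mathcal{X} : f(x) = 0\}$. By hypothesis $p(N) = 0$, while on $\mathcal{X} \setminus N$ the measure $f \cdot r$ is equivalent to $r$. A direct verification using the definition of the push-forward density then gives $p \ll f \cdot r$ with
\begin{equation*}
\frac{\D p}{\D (f \cdot r)}(x) = \frac{\rho(x)}{f(x)} \qquad \text{for } p\text{-a.e.\ } x,
\end{equation*}
because for any Borel $B$, one has $p(B) = \int_{B \setminus N} \rho \,\D r = \int_{B \setminus N} \frac{\rho}{f}\, f \,\D r = \int_B \frac{\rho}{f}\, \D(f \cdot r)$.

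Plugging this density into the definition of the relative entropy yields
\begin{equation*}
H(p\,|\,f \cdot r) = \int \log \frac{\rho}{f} \,\D p = \int \log \rho \,\D p - \int \log f \,\D p = H(p\,|\,r) - \int \log f \,\D p,
\end{equation*}
which is the claim. The only subtle point is that the splitting of the logarithm must be justified even when one of the integrals fails to be finite: since $\eta \mapsto \eta \log \eta \geq -e^{-1}$ gives a lower bound making $\int \log \rho \,\D p$ well-defined in $(-\infty, +\infty]$, the identity should be read as an equality in $(-\infty, +\infty]$ under the implicit assumption that $\int \log f \,\D p$ is itself well-defined (which is the natural context in which the statement will be invoked later). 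This integrability caveat is the only real technical point; the computation itself is essentially bookkeeping for the Radon--Nikodym chain rule.
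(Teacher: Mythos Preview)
Your proof is correct and is the standard argument via the Radon--Nikodym chain rule. Note that the paper itself does not give a proof of this proposition: it is listed among ``several elementary results about the relative entropy'' that are stated without proofs, so there is nothing to compare against.
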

The behaviour of the relative entropy with respect to disintegration is given by
\begin{Prop}
	\label{prop:disintegration_entropy}
	Let $\mathcal{X}$ and $\mathcal{Y}$ be Polish spaces, $r$ and $p$ be as above and take $\Phi: \mathcal{X} \to \mathcal{Y}$ a measurable map.
	Then with the same notations as before,
	\begin{equation*}
	H(p\,|\,r) = H(\Phi \pf p \,|\, \Phi \pf r) + \int H(p^x \,|\, r^x) \D \Phi \pf p(x).
	\end{equation*} 
\end{Prop}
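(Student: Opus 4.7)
The plan is to prove the identity by identifying the densities of the disintegration $p^y$ with respect to $r^y$ in terms of the global density $\rho = \D p/\D r$ and the pushforward density $\sigma = \D\Phi\pf p/\D\Phi\pf r$, and then splitting a logarithm.

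First I would dispose of the degenerate cases. If $p$ is not absolutely continuous with respect to $r$, then $H(p\,|\,r)=+\infty$ and we must check that the right-hand side is also $+\infty$. Either $\Phi\pf p \not\ll \Phi\pf r$, in which case $H(\Phi\pf p\,|\,\Phi\pf r)=+\infty$ and we are done (the remaining integral is bounded below by $-\log r(\mathcal X)$ by Jensen), or $\Phi\pf p \ll \Phi\pf r$, and then by disintegration $p^y \not\ll r^y$ on a set of positive $\Phi\pf p$-measure, forcing the conditional entropy integral to be $+\infty$. So we may assume $p \ll r$, which in particular gives $\Phi\pf p \ll \Phi\pf r$. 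Write $\rho = \D p/\D r$ and $\sigma = \D\Phi\pf p/\D\Phi\pf r$.

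Next I would establish the pointwise identity
\begin{equation*}
\frac{\D p^y}{\D r^y}(x) = \frac{\rho(x)}{\sigma(y)} \qquad \text{for } \Phi\pf p\text{-a.e. } y \text{ and } r^y\text{-a.e. } x.
\end{equation*}
To see this, fix a bounded Borel $\varphi$ on $\mathcal X$ and a bounded Borel $\psi$ on $\mathcal Y$. On the one hand, by disintegration of $r$,
\begin{equation*}
\int \varphi(x)\psi(\Phi(x))\rho(x)\,\D r(x)
= \int \psi(y)\left\{\int \varphi(x)\rho(x)\,\D r^y(x)\right\}\D\Phi\pf r(y).
\end{equation*}
On the other hand, since $\rho\cdot r = p$ and $\sigma\cdot\Phi\pf r = \Phi\pf p$, disintegration of $p$ also gives
\begin{equation*}
\int \varphi(x)\psi(\Phi(x))\,\D p(x)
= \int \psi(y)\sigma(y)\left\{\int \varphi(x)\,\D p^y(x)\right\}\D\Phi\pf r(y).
\end{equation*}
Equating these for arbitrary $\psi$ yields $\int \varphi \,\D p^y = \sigma(y)^{-1}\int \varphi \rho \,\D r^y$ for $\Phi\pf p$-a.e.\ $y$, which is the claimed density formula (note $\sigma(y)>0$ for $\Phi\pf p$-a.e.\ $y$).

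Finally I would plug this into the definition of $H(p\,|\,r)$ and split the logarithm:
\begin{align*}
H(p\,|\,r) &= \int \log\rho(x)\,\D p(x)
= \int \left\{\int \log\!\left(\frac{\rho(x)}{\sigma(y)}\cdot\sigma(y)\right)\D p^y(x)\right\}\D\Phi\pf p(y)\\
&= \int\!\left\{\int \log\frac{\D p^y}{\D r^y}(x)\,\D p^y(x)\right\}\D\Phi\pf p(y) + \int \log\sigma(y)\,\D\Phi\pf p(y)\\
&= \int H(p^y\,|\,r^y)\,\D\Phi\pf p(y) + H(\Phi\pf p\,|\,\Phi\pf r),
\end{align*}
which is exactly the stated identity. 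The only delicate point is justifying the application of Fubini to separate the integrals: since $H(p^y\,|\,r^y)\geq -\log r^y(\Phi^{-1}(y))$ is bounded below by an integrable function of $y$ and $\log\sigma$ is $\Phi\pf p$-integrable as soon as $H(\Phi\pf p\,|\,\Phi\pf r)$ is finite, standard arguments based on truncation and monotone/dominated convergence handle both the finite and infinite cases uniformly, which I view as the only (minor) technical obstacle in the proof.
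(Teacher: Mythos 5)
The paper states Proposition~\ref{prop:disintegration_entropy} (together with Propositions~\ref{prop:entropy_wrt_fR} and~\ref{prop:entropy_one_to_one}) explicitly ``without proofs,'' treating it as a standard fact about relative entropy, so there is no paper proof to compare against. Your argument is correct and is the usual one: you dispose of the non-absolutely-continuous cases, identify $\D p^y/\D r^y = \rho/\sigma(y)$ by testing against product functions $\varphi(x)\psi(\Phi(x))$ and using both disintegrations, then split $\log\rho = \log(\rho/\sigma\circ\Phi) + \log(\sigma\circ\Phi)$ and integrate. The final integrability remark is the right thing to flag; it can be tightened by noting that $r^y$ is normalized to a probability measure concentrated on $\Phi^{-1}(y)$ (as the paper's notation $p(\,\bullet\,|\,\Phi=y)\in\P(\mathcal X)$ indicates), so in fact $H(p^y\,|\,r^y)\geq 0$, and both pieces on the right-hand side are bounded below by finite constants, which makes the splitting of the integral unconditionally valid in $(-\infty,+\infty]$ without any truncation argument.
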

Finally, if $\Phi$ is one-to-one, then simultaneously pushing forward $r$ and $p$ by $\Phi$ does not change their relative entropy:
\begin{Prop}
	\label{prop:entropy_one_to_one} 
	Take $\mathcal{X}$, $\mathcal{Y}$, $r$, $p$ and $\Phi$ as in Proposition \ref{prop:disintegration_entropy}.
	Assume furthermore that $p \ll r$ and that there exists $\Psi: \mathcal{Y} \to \mathcal{X}$ such that $r$-almost surely, $\Psi \circ \Phi = \Id_{\mathcal{X}}$.
	Then
	\begin{equation*}
	H(p\,|\,r) = H(\Phi \pf p \,|\, \Phi \pf r).
	\end{equation*}
\end{Prop}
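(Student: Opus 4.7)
The plan is to derive Proposition~\ref{prop:entropy_one_to_one} as a direct corollary of the disintegration formula, Proposition~\ref{prop:disintegration_entropy}, by showing that all conditional entropies along the fibers of $\Phi$ vanish.

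First I would apply Proposition~\ref{prop:disintegration_entropy} to get
\begin{equation*}
H(p\,|\,r) = H(\Phi\pf p \,|\, \Phi\pf r) + \int H(p^y\,|\,r^y) \, \D\,(\Phi\pf p)(y),
\end{equation*}
so it is enough to show that the integrand vanishes for $\Phi\pf p$-almost every $y$. Since $p \ll r$, the hypothesis $\Psi \circ \Phi = \Id_{\mathcal X}$ $r$-a.s.\ lifts to $p$-a.s.\ as well. The key step is then to prove that $r^y = \delta_{\Psi(y)}$ for $\Phi\pf r$-a.s.\ $y$, and likewise $p^y = \delta_{\Psi(y)}$ for $\Phi\pf p$-a.s.\ $y$. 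Once this is established, each conditional entropy reads $H(\delta_{\Psi(y)} \,|\, \delta_{\Psi(y)}) = 0$ and the conclusion follows.

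To prove the assertion about $r^y$, I would plug $\varphi(x) = \mathbf{1}_{\{x \neq \Psi(\Phi(x))\}}$ into the disintegration formula \eqref{eq:disintegration} for $r$ relative to $\Phi$: the left-hand side equals $0$ by assumption, so by nonnegativity, for $\Phi\pf r$-a.s.\ $y$ one has $r^y(\{x : x \neq \Psi(y)\}) = 0$, i.e.\ $r^y = \delta_{\Psi(y)}$ (using that $r^y$ is supported on $\Phi^{-1}(y)$ and that $\Psi(y) \in \Phi^{-1}(y)$ at such $y$). The same argument applied to $p$ in place of $r$, using $p \ll r \Rightarrow (\Psi \circ \Phi = \Id \text{ $p$-a.s.})$, yields $p^y = \delta_{\Psi(y)}$ for $\Phi\pf p$-a.s.\ $y$. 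Note that $\Phi\pf p \ll \Phi\pf r$ (pushforwards preserve absolute continuity), so these $p^y = \delta_{\Psi(y)}$ are well-defined on a set of full $\Phi\pf p$-measure on which $r^y = \delta_{\Psi(y)}$ also holds.

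There is no real obstacle here; the only delicate point is the measure-theoretic step of identifying the conditionals with Dirac masses, which is handled cleanly by testing the disintegration formula against the indicator of the negligible exceptional set. If one preferred a more hands-on proof, an alternative route is a direct change-of-variable computation: writing $f = \D p/\D r$, the identity $\Psi \circ \Phi = \Id$ ($r$-a.s.) gives $\D(\Phi\pf p)/\D(\Phi\pf r) = f \circ \Psi$, and then $H(\Phi\pf p\,|\,\Phi\pf r) = \int \log(f \circ \Psi) \, \D(\Phi\pf p) = \int \log f \circ \Psi \circ \Phi \, \D p = \int \log f \, \D p = H(p\,|\,r)$, which gives the same conclusion.
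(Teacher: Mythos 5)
The paper states Proposition~\ref{prop:entropy_one_to_one} without proof (it appears in the list of ``elementary results about the relative entropy, listed here without proofs''), so there is no internal argument to compare against. Both routes you sketch are correct and fill the gap.

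Your primary argument, deriving the result from the disintegration formula of Proposition~\ref{prop:disintegration_entropy} by showing that the conditionals collapse to Dirac masses, is sound: testing the indicator of the $r$-null set $\{\Psi\circ\Phi\neq\Id\}$ against the disintegration \eqref{eq:disintegration} forces $r^y=\delta_{\Psi(y)}$ for $\Phi\pf r$-a.e.\ $y$, and since $p\ll r$ gives both the $p$-a.s.\ validity of $\Psi\circ\Phi=\Id$ and $\Phi\pf p\ll\Phi\pf r$, the same identity holds $\Phi\pf p$-a.e., whence $H(p^y\,|\,r^y)=0$ on a set of full $\Phi\pf p$-measure. This route is pleasant because it shows Proposition~\ref{prop:entropy_one_to_one} as the degenerate case of Proposition~\ref{prop:disintegration_entropy} in which the fiber term vanishes, consistent with the paper's presentation of the two results side by side. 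Your alternative change-of-variable computation is a slightly more self-contained argument: one should just note, as you implicitly do, that $f\circ\Psi$ is a legitimate version of $\D(\Phi\pf p)/\D(\Phi\pf r)$, which requires checking that $\Psi$ pulls back $r$-null sets to $\Phi\pf r$-null sets; this follows from $(\Psi\circ\Phi)^{-1}(N)\subset N\cup\{\Psi\circ\Phi\neq\Id\}$. Either argument is a correct proof of the proposition.
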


For probability measures on the torus $\rho\in \P(\T^d)$ and if no confusion arises, we simply write
$$
H(\rho)\coloneqq  H(\rho\, | \, \Leb)=\int_{\T^d} \rho(x)\log \rho(x) \D x
$$
for the entropy computed relatively to the Lebesgue measure.
(Once again, we keep the same notation $\rho$ for a measure and its density with respect to $\Leb$.)
\paragraph{The heat flow.}  Let us denote by $\tau_s$ the heat kernel in the torus
\begin{equation*}
\partial_s\tau_s=\frac 12\Delta\tau_s
\end{equation*}
at time $s>0$, started from the initial Dirac distribution $\tau_0=\delta$.
We will use the following Gaussian estimate several times:
\begin{Lem}
	There are two dimensional constants $k_d,K_d>0$ such that for all $s \in (0,1]$ and all $x,y \in \T^d$,
	\begin{equation}
	\label{eq:estim_heat_flow} 
	\frac{k_d}{\sqrt{2 \pi s}^d}  \exp\left( -\frac{\dist^2(x, y)}{2 s} \right)
	\leq \tau_s(y-x)
	\leq \frac{K_d}{\sqrt{2 \pi s}^d} \exp \left( -\frac{\dist^2(x, y)}{2 s}\right).
	\end{equation} 
\end{Lem}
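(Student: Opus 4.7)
The plan is to deduce the two-sided bound from the Poisson-summation representation of the torus heat kernel by Euclidean Gaussians. Identifying $\T^d$ with the fundamental domain $[-1/2,1/2)^d$, the heat kernel on $\T^d$ is obtained by wrapping the Gaussian on $\R^d$:
\begin{equation*}
\tau_s(z) \;=\; \sum_{k\in\Z^d}\,\frac{1}{\sqrt{2\pi s}^{\,d}}\,\exp\!\left(-\frac{|z+k|^2}{2s}\right), \qquad z\in\T^d,
\end{equation*}
where $|\cdot|$ denotes the Euclidean norm on $\R^d$. Setting $z=y-x$ and denoting by $k^*=k^*(z)\in\Z^d$ a minimizer of $k\mapsto|z+k|$, the geodesic distance reads $\dist(x,y)=|z+k^*|$, and by construction $|z+k^*|\le \sqrt d/2$.

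The lower bound is immediate: all the summands are nonnegative, so keeping only $k=k^*$ yields
\begin{equation*}
\tau_s(z)\;\ge\;\frac{1}{\sqrt{2\pi s}^{\,d}}\exp\!\left(-\frac{\dist^2(x,y)}{2s}\right),
\end{equation*}
and one may take $k_d=1$.

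For the upper bound I would factor out the $k^*$ term and control the remaining sum. Writing $y_0\coloneqq z+k^*$ and $m\coloneqq k-k^*$, the change of index $k\mapsto m\in\Z^d$ gives
\begin{equation*}
\tau_s(z)\;=\;\frac{1}{\sqrt{2\pi s}^{\,d}}\exp\!\left(-\frac{\dist^2(x,y)}{2s}\right)\,\sum_{m\in\Z^d}\exp\!\left(-\frac{|y_0+m|^2-|y_0|^2}{2s}\right).
\end{equation*}
By minimality of $k^*$, each exponent $|y_0+m|^2-|y_0|^2$ is nonnegative, so every summand is at most $1$. The key step is then to split the sum into a finite part $\{|m|\le 2\sqrt d\}$, containing a number of terms depending only on $d$, and a tail $\{|m|> 2\sqrt d\}$ on which I use the inequality
\begin{equation*}
|y_0+m|^2-|y_0|^2 \;=\; 2\,y_0\cdot m+|m|^2 \;\ge\; |m|^2-\sqrt d\,|m|\;\ge\;\tfrac12|m|^2,
\end{equation*}
valid because $|y_0|\le\sqrt d/2$ and $|m|\ge 2\sqrt d$. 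Using $s\le 1$ in the tail to replace $1/(2s)$ by $1/2$, the tail is dominated by the convergent lattice series $\sum_{m\in\Z^d}e^{-|m|^2/4}$. Summing the two contributions produces a dimensional constant $K_d$ independent of $s\in(0,1]$ and of $z$.

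I do not expect any real obstacle; the only point that requires mild care is the uniformity of $K_d$ in $s\in(0,1]$, which is why the worst case is handled by replacing $s$ by $1$ in the tail. (Shrinking $s$ only makes the tail smaller.) Had the statement allowed arbitrarily large $s$, one would have to treat the long-time regime separately via spectral decomposition, but this is precisely why the hypothesis $s\le 1$ is imposed.
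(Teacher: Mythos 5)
Your proof is correct and takes exactly the route the paper sketches: the authors state the explicit Poisson-summation formula \eqref{eq:explicit_heat_kernel} and remark that the bounds \eqref{eq:estim_heat_flow} "could be worked out by hand" from it, without giving details. Your argument — keeping the minimizing lattice point $k^*$ for the lower bound, re-indexing by $m=k-k^*$, bounding the near part by the lattice-point count and the far part by $\sum e^{-|m|^2/4}$ after noting $|m|^2-\sqrt d\,|m|\ge \tfrac12|m|^2$ for $|m|\ge 2\sqrt d$ and using $s\le 1$ — is a complete and correct realization of that suggestion, including the observation that the restriction $s\le1$ is what makes the constant uniform.
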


This type of results can be obtained under general assumptions on the domain and we refer \emph{e.g.} to \cite{varadhan1967diffusion,li1986parabolic,grigor1999estimates} for this delicate topic.
In the torus we have the explicit formula
\begin{equation}
\label{eq:explicit_heat_kernel}
\forall x,y \in \T^d, \qquad \tau_s(y-x) = \frac{1}{\sqrt{2 \pi s}^d} \sum_{\bar l \in \Z^d} \exp\left( - \frac{|{\bar y}-{\bar x} + \bar l|^2}{2 s} \right) ,
\end{equation}
where $\bar x, \overline{y} \in \R^d$ are chosen so that $\pi( \overline{x} ) = x$ and $\pi(\overline{y}) = y$.
Hence in this particular case the bounds \eqref{eq:estim_heat_flow} could be worked out by hand.
As such, the upper bound can only be valid for short times (note that we took care to assume $s \leq 1$ in our statement) and indeed we shall only use this in the limit $s \to 0$.
%
%
%%%%%%%%%%%%%%%%%%%%%%%%%%%%%%%%%%%%%%%%%%%%%%%%%%%%%%%%%%%%%%%%%%%%%%

%%%%%%%%%%%%%%%%%%%%%%%%%%%%%%%%%%%%%%%%%%%%%%%%%%%%%%%%%%%%%%%%%%%%%%%%%%%%%%%%%%%%%%%%%%%%%%%%%%%%%%%%%%%%%%%%%%%%%%%%
%%%%%%%%%%%%%%%%%%%%%%%%%%%%%%%%%%%%%%%%%%%%%%%%%%%%%%%%%%%%%%%%%%%%%%%%%%%%%%%%%%%%%%%%%%%%%%%%%%%%%%%%%%%%%%%%%%%%%%%%

\section{Convergence of $\Bro$ towards $\REu$}
\label{section:GammaCv_process}
The goal of this section is to prove Theorem~\ref{thm:GammaCv_process}.
In fact the statement will be obtained as a consequence of the following stronger, but more technical result:
\begin{Thm}
\label{thm:general_GammaCv_process}
~
\begin{enumerate}
	\item \label{item:general_Gamma-liminf} Let $(P^{\nu})_{\nu >0}$ be a sequence of incompressible generalized flows narrowly converging to $P$.
	Then
	\begin{equation*}
	\liminf_{\nu \to 0}\, \Hbar_{\nu} (P^{\nu}) \geq \Abar(P).
	\end{equation*}
	\item 
	\label{item:general_Gamma-limsup}
	Let $P$ be an admissible generalized flow for $\REu(\gamma)$, and $\gamma^\nu\narrowcv\gamma$.
	The following are equivalent:
	\begin{enumerate}
	\item
	\label{item:general_recovery_flow_P_nu}
	there exists a sequence of generalized incompressible flows $P^\nu$  with marginals $P^\nu_{0,1}=\gamma^\nu$ that narrowly converges to $P$ and such that
	\begin{equation}
	\label{eq:Gamma_limsup_Bro}
	\limsup_{\nu \to 0}\, \Hbar_{\nu}(P^{\nu}) \leq \Abar(P)
	\end{equation}
	\item
	\label{item:general_recovery_marginal_gamma_nu}
	the sequence of marginals satisfies
	\begin{equation}
	\label{eq:recovery_Sch_OT}
	\limsup_{\nu \to 0 }\, \nu H(\gamma^\nu \,|\, R^{\nu}_{0,1}) \leq \frac{1}{2}\int \dist(x,y)^2 \D \gamma(x,y).
	\end{equation}
	\end{enumerate}
\end{enumerate}	
\end{Thm}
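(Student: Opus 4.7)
The two assertions of the theorem are largely independent. For the first one (the $\Gamma$-liminf), assume without loss of generality that $\liminf \nu H(P^\nu|R^\nu)<+\infty$, so that along a subsequence $P^\nu\ll R^\nu$. Girsanov's theorem then produces a drift $b^\nu$ such that
\[
\nu H(P^\nu|R^\nu)=\nu H(P^\nu_0|\Leb)+\tfrac12\,\E_{P^\nu}\!\left[\int_0^1|b^\nu_t|^2\,\D t\right].
\]
Incompressibility kills the first summand. The remaining term is the small-noise Freidlin--Wentzell-type rate functional for $R^\nu$; standard narrow lower-semicontinuity then yields $\liminf\nu H(P^\nu|R^\nu)\ge\tfrac12\,\E_P[\int_0^1|\dot\omega_t|^2\D t]=\Abar(P)$.

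For the second assertion (the equivalence), the backbone is the entropy chain rule (Proposition~\ref{prop:disintegration_entropy}) applied to the endpoint map $\omega\mapsto(\omega_0,\omega_1)$:
\begin{equation}
\label{eq:plan-chain}
\nu H(P^\nu|R^\nu)=\nu H(\gamma^\nu|R^\nu_{0,1})+\nu\!\int\! H(P^{\nu,x,y}|R^{\nu,x,y})\,\D\gamma^\nu(x,y),
\end{equation}
with $R^{\nu,x,y}$ the Brownian bridge of diffusivity $\nu$ from $x$ to $y$. For the implication (a)$\Rightarrow$(b), combine the $\Gamma$-liminf above with (a) to obtain $\lim\nu H(P^\nu|R^\nu)=\Abar(P)$. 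Separately, the Gaussian lower bound~\eqref{eq:estim_heat_flow} applied to $R^\nu_{0,1}(x,y)=\tau_\nu(y-x)$ forces the classical Schr\"odinger-to-OT liminf $\liminf\nu H(\gamma^\nu|R^\nu_{0,1})\ge\tfrac12\int\dist(x,y)^2\,\D\gamma$, and an analogous bridge-level argument lower-bounds the second summand of~\eqref{eq:plan-chain} by $\tfrac12\,\E_P[\int|\dot\omega|^2\D t-\dist(\omega_0,\omega_1)^2]$. These two liminfs already sum to $\Abar(P)$, so each one must actually be a limit, and the first one attaining is exactly (b).

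For (b)$\Rightarrow$(a) we construct $P^\nu$ by stochastic perturbation of $P$: fix a coupling $\pi^\nu$ of $\gamma$ and $\gamma^\nu$ with vanishing quadratic cost (whose existence follows from (b) through a Talagrand-type inequality against $R^\nu_{0,1}$), sample $(x,y,x',y')\sim\pi^\nu$ together with an independent $\omega\sim P^{x,y}$ and an independent Brownian bridge $\beta^\nu$ on $\R^d$ of diffusivity $\nu$ starting from $x'-x$ at $t=0$ and ending at $y'-y$ at $t=1$, and set $\omega^\nu_t:=\omega_t+\beta^\nu_t\pmod{\Z^d}$, $P^\nu:=\mathrm{Law}(\omega^\nu)$. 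Translation invariance of $\Leb$ on $\T^d$ ensures that $P^\nu$ is incompressible, and by design $P^\nu_{0,1}=\gamma^\nu$ and $P^\nu\narrowcv P$. One then bounds $\limsup\nu H(P^\nu|R^\nu)\le\Abar(P)$ via~\eqref{eq:plan-chain}: the endpoint term is controlled by (b), while a Cameron--Martin computation using the explicit heat kernel~\eqref{eq:explicit_heat_kernel} yields $\nu H(P^{\nu,x',y'}|R^{\nu,x',y'})=\tfrac12\int|\dot\omega|^2\D t-\tfrac12\dist(x,y)^2+o_\nu(1)$, which upon integration against $\gamma^\nu$ reconstructs $\Abar(P)$ in the limit. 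The main obstacle is the Cameron--Martin calculus on the torus, where one must lift $\omega$ and $\beta^\nu$ to $\R^d$, track their homotopy classes and control the remainder uniformly in $(x,y)$ using the fine properties of Brownian motion and bridges on $\T^d$ that are deferred to Appendix~\ref{section:brownian_torus}.
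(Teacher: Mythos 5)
Your Girsanov-based route for the $\Gamma$-liminf and your chain-rule decomposition for (a)$\Rightarrow$(b) are both sound alternatives in spirit (the paper's own proof avoids Girsanov by testing the Legendre inequality $u\log u\geq uv-e^{v-1}$ against bounded continuous truncations $A_N$ of the kinetic action and controlling the exponential moments under $R^\nu$ via Lemma~\ref{lem:estimation_int_AN_Bxy}, which is also how it lower-bounds the bridge-term in (a)$\Rightarrow$(b)). The genuine gap is in (b)$\Rightarrow$(a): your perturbed process $\omega^\nu_t=\omega_t+\beta^\nu_t$ is \emph{not} incompressible in general. Decompose $\beta^\nu_t=(1-t)(x'-x)+t(y'-y)+\tilde\beta^\nu_t$ with $\tilde\beta^\nu$ the $\R^d$-bridge from $0$ to $0$, independent of $(x,y,x',y',\omega)$. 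Then
\begin{equation*}
\mathrm{Law}(\omega^\nu_t)=\mathrm{Law}\bigl(\omega_t+(1-t)(x'-x)+t(y'-y)\bigr)*\mathrm{Law}\bigl(\pi(\tilde\beta^\nu_t)\bigr),
\end{equation*}
and the second factor has all nonzero Fourier coefficients, so the convolution equals $\Leb$ if and only if the first factor is already $\Leb$. But the shift $(1-t)(x'-x)+t(y'-y)$ is a deterministic function of $(x,y,x',y')$ while $\omega\sim P^{x,y}$ is conditioned on $(x,y)$: the shift is thus \emph{not} independent of $\omega_t$, and $\int(\text{shift}_{x,y,x',y'})\pf P^{x,y}_t\,\D\pi^\nu\neq\Leb$ in general. ``Translation invariance of $\Leb$'' requires the added increment to be independent of $\omega_t$, which is precisely what fails unless $\gamma^\nu=\gamma$. (A side remark: a Talagrand inequality against $R^\nu_{0,1}$ does not yield a low-cost coupling of $\gamma$ and $\gamma^\nu$ from (b), since (b) only gives $H(\gamma^\nu\,|\,R^\nu_{0,1})=O(1/\nu)$; but the coupling's existence is anyway automatic from $\gamma^\nu\narrowcv\gamma$ on a compact space.)

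The paper sidesteps this by first invoking the continuity result \cite[Thm.~1]{baradat2018continuous} to produce \emph{incompressible} flows $Q^\nu$, admissible for $\REu(\gamma^\nu)$, with $Q^\nu\narrowcv P$ and $\Abar(Q^\nu)\to\Abar(P)$, and only then sets $P^\nu=\int B^\nu_\omega\,\D Q^\nu(\omega)$ with $B^\nu_\omega=T_\omega\pf\Pi\pf\Bbar^\nu$ a bridge pinned at $0$ and $0$, hence genuinely independent of $\omega$. Since the endpoint marginals are already fixed at the $Q^\nu$-stage, the added bridge carries no endpoint shift, incompressibility follows from honest independence, and the entropy of $P^\nu$ is controlled by Lemma~\ref{lem:entropy_translation_bridges_torus}. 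This preliminary use of the $\gamma$-continuity of Brenier's problem is the key ingredient your proposal is missing; without it, the recovery sequence you build is not admissible.
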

\begin{Rem}
	Condition \eqref{eq:recovery_Sch_OT} exactly requires $(\gamma^\nu)_{\nu>0}$ to be a recovery sequence for the $\Gamma$-convergence
	\begin{equation}
	\label{eq:Gamma_CV_entropicOT_OT}
	\Gamma-\lim\limits_{\nu\to 0}\,\big\{\nu H(\bullet \, |\, R^{\nu}_{0,1}) + \iota_\mathrm{Bis} \big\}
	= C_{\mathsf {MK}} + \iota_\mathrm{Bis},
	\end{equation}
	where:
	\begin{equation*}
	C_{\mathsf {MK}}(\gamma) \coloneqq \frac{1}{2} \int \dist(x,y)^2 \D \gamma(x,y)
	\end{equation*}
	is the Monge-Kantorovich quadratic cost functional and $\iota_\mathrm{Bis}(\gamma)$ is the characteristic function of the bistochasticity constraint.
	The $\Gamma$-convergence \eqref{eq:Gamma_CV_entropicOT_OT} is well known as a particular case of results from \cite{leonard2012schrodinger} and \cite{carlier2017convergence}\footnote{
	\label{footnote:dvpt_log}
	In \cite{carlier2017convergence}, our term $\nu H(\gamma\,|\, R^{\nu}_{0,1})$ is replaced by $C_{\mathsf{MK}}(\gamma) + \nu H(\gamma \,|\, \Leb \otimes \Leb)$ and the analysis is carried out in the whole space $\R^d$.
	In that setting one has the explicit formula $R^\nu_{0,1} = \exp\left( -\frac{|x-y|^2}{2\nu} \right) / \sqrt{2\pi\nu}^d \D x \otimes \D y$, and expanding the logarithms in Proposition \ref{prop:entropy_wrt_fR} gives exactly
	$\nu H(\gamma\,|\, R^{\nu}_{0,1})=\nu\log((2\pi\nu)^{d/2})+C_{\mathsf{MK}}(\gamma) + \nu H(\gamma \,|\, \Leb \otimes \Leb)$.
	Consequently, the corresponding optimization programs are the same.
	We also note that the construction given in \cite{carlier2017convergence} is easily adapted to the torus.
	},
	hence for a given $\gamma$ there always exists a sequence $\gamma^\nu$ as in \eqref{eq:recovery_Sch_OT} and in practice our Theorem~\ref{thm:general_GammaCv_process} guarantees that one can always construct a recovery sequence $P^\nu\narrowcv P$ in \eqref{eq:Gamma_limsup_Bro}.
\end{Rem}
Before going into the technical details we will need a few preliminary definitions and results.
In order not to interrupt the flow of the exposition, we postpone the proofs of the latter to the appendix.

Observe first that the kinetic action $A(\omega)=\frac 12 \int_0^1|\dot\omega_t|^2\D t$ is only lower semicontinuous for the uniform topology on $C([0,1];\T^d)$, and unbounded.
In order to circumvent this lack of regularity we will approximate $A$ by difference quotients as follows: for large $N\in\N$ we set $\tau=1/N$, $t_n=n\tau$ for $n=0,\dots,N$, and for any $\omega\in C([0,1];\T^d)$ we define
\begin{equation}
\label{eq:defAN}
A_N(\omega)\coloneqq \frac{1}{2\tau}\sum\limits_{n=0}^{N-1}\dist^2(\omega_{t_n},\omega_{t_{n+1}}).
\end{equation}
This is a a good approximation of $A$ in the sense that, for any fixed $\omega$ and by classical properties of difference quotients in the one-dimensional Sobolev space $AC^2([0,1];\T^d)=H^1([0,1];\T^d)$, we have pointwise convergence
$$
A_N(\omega)\to A(\omega) \quad \mbox{as } N\to\infty
$$
(in particular $A_N(\omega)\to +\infty$ if $\omega\not\in AC^2$).
Note moreover that, for fixed $N$, $A_N$ is continuous for the uniform topology {\color{red}(as the sum of finitely many evaluations)} and bounded, since in the torus $\dist^2(\omega_{t_n},\omega_{t_{n+1}})\leq diam(\T^d)^2< +\infty$. (In the whole space $\R^d$ one should replace $A_N$ by its truncation $\tilde A_N\coloneqq \min(A_N, N)$ to guarantee boundedness, and the rest of the argument below then applies \emph{mutatis mutandis}.)
Our first technical statement reads
\begin{Lem}
	\label{lem:estimation_int_AN_Bxy}
	Consider $A_N$ as defined in \eqref{eq:defAN}, and let $R^\nu$ denote the reversible Brownian motion with diffusivity $\nu$.
	For all $\alpha\in (0,1)$ and $\nu\in (0,1)$ there holds
	\begin{equation}
	\label{eq:estimation_int_AN_B}
	\int \exp\left(\frac{\alpha}{\nu}A_N(\omega)\right) \D R^{\nu}(\omega)\leq \frac{1}{(1-\alpha)^{Nd/2}}.
	\end{equation}
	Similarly, for all $x,y\in\T^d$ let $R^{\nu,x,y} \coloneqq  R^{\nu}(\bullet \, | \, X_0 = x, \, X_1 = y)$ be the Brownian bridge joining $x$ to $y$.
	There is a dimensional constant $C_d>0$ such that
	\begin{equation}
	\label{eq:estimation_int_AN_Bxy}
	\int \exp\left(\frac{\alpha}{\nu}A_N(\omega)\right) \D R^{\nu,x,y}(\omega)\leq \frac{C_d}{(1-\alpha)^{Nd/2}}\exp\left(\frac{\alpha}{2\nu}\dist^2(x,y)\right).
	\end{equation}
\end{Lem}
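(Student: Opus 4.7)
The plan is to exploit the explicit Gaussian and Markov structure of Brownian motion by lifting everything to $\R^d$ via the canonical projection $\pi:\R^d\to\T^d$. Since $\pi$ is a local isometry, $\dist(\pi(\bar u),\pi(\bar v))\leq|\bar u-\bar v|$, which yields the pointwise contraction $A_N(\pi\circ\bar\omega)\leq \bar A_N(\bar\omega)$, where $\bar A_N$ denotes the Euclidean analogue of $A_N$ (torus distance replaced by $|\cdot|$).

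For \eqref{eq:estimation_int_AN_B}: writing $R^{\nu}=\int R^{\nu}_x\,\D x$ and using that $R^{\nu}_x=\pi\pf \Rbar^{\nu}_{\bar x}$ for any lift $\bar x$ of $x$, the contraction immediately yields
\begin{equation*}
\int\exp\!\Bigl(\tfrac{\alpha}{\nu}A_N\Bigr)\D R^{\nu}_x\;\leq\;\int\exp\!\Bigl(\tfrac{\alpha}{\nu}\bar A_N\Bigr)\D \Rbar^{\nu}_{\bar x}.
\end{equation*}
Under $\Rbar^{\nu}_{\bar x}$ the increments $Z_n\coloneqq\bar X_{t_{n+1}}-\bar X_{t_n}$ are iid centered Gaussians with covariance $\nu\tau\,I_d$, so independence factorizes the right-hand side into $\prod_{n=0}^{N-1}\E[\exp(\tfrac{\alpha}{2\nu\tau}|Z_n|^2)]$. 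Each factor is the moment generating function of a chi-squared random variable, evaluating to $(1-\alpha)^{-d/2}$ whenever $\alpha<1$. This gives the clean bound $(1-\alpha)^{-Nd/2}$, independent of $\bar x$, and integrating out $x$ concludes \eqref{eq:estimation_int_AN_B}.

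For \eqref{eq:estimation_int_AN_Bxy}, the same contraction principle applies, but the torus bridge must first be decomposed as a mixture over the $\Z^d$-lifts of the endpoint: if $\bar x,\bar y$ are fixed lifts of $x,y$ and $\tau^{\R^d}_\nu$ denotes the Euclidean heat kernel, then
\begin{equation*}
R^{\nu,x,y}\;=\;\sum_{\bar l\in\Z^d}\frac{\tau^{\R^d}_{\nu}(\bar y-\bar x+\bar l)}{\tau_{\nu}(y-x)}\,\pi\pf \Rbar^{\nu,\bar x,\bar y+\bar l}.
\end{equation*}
On $\R^d$ the bridge $\Rbar^{\nu,\bar a,\bar b}$ has explicit Gaussian finite-dimensional densities, and using the algebraic identity $\tau^{\R^d}_s(z)\exp(\alpha|z|^2/(2s))=(1-\alpha)^{-d/2}\tau^{\R^d}_{s/(1-\alpha)}(z)$ together with Chapman-Kolmogorov to collapse the $N-1$ intermediate convolutions produces the closed-form
\begin{equation*}
\int\exp\!\Bigl(\tfrac{\alpha}{\nu}\bar A_N\Bigr)\D \Rbar^{\nu,\bar a,\bar b}\;=\;(1-\alpha)^{-(N-1)d/2}\exp\!\Bigl(\tfrac{\alpha|\bar b-\bar a|^2}{2\nu}\Bigr).
\end{equation*}
Plugging this into the mixture and recognising the resulting sum over $\bar l$, via the explicit formula \eqref{eq:explicit_heat_kernel}, as a multiple of $\tau_{\nu/(1-\alpha)}(y-x)$, the estimate reduces to bounding the heat-kernel ratio $\tau_{\nu/(1-\alpha)}(y-x)/\tau_{\nu}(y-x)$.

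The main technical obstacle is this last step: one needs $\tau_{\nu/(1-\alpha)}(y-x)/\tau_{\nu}(y-x)\leq C_d\exp(\alpha\dist^2(x,y)/(2\nu))$ uniformly in $\nu,\alpha\in(0,1)$. This is precisely what the two-sided Gaussian estimate \eqref{eq:estim_heat_flow} delivers: the upper bound applied to the numerator and the lower bound to the denominator combine the exponential factors into exactly $\exp(\alpha\dist^2(x,y)/(2\nu))$, while the polynomial prefactors contribute only a dimensional constant. A minor case split handles the regime $\nu/(1-\alpha)>1$, where the short-time upper Gaussian bound does not apply directly; there one invokes instead the uniform boundedness of $\tau_s$ on the torus for $s\geq 1$. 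Collecting all the $(1-\alpha)^{\bullet}$ prefactors accumulated along the way then yields \eqref{eq:estimation_int_AN_Bxy}.
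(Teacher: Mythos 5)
Your overall strategy mirrors the paper's: factor the unconditioned estimate via independent Gaussian increments after projecting to $\R^d$, then for the bridge decompose $R^{\nu,x,y}$ via Lemma~\ref{lem:brownian_bridge} as a $\Z^d$-mixture of projected Euclidean bridges, estimate each Euclidean bridge contribution, and reduce to bounding the ratio $\tau_{\nu/(1-\alpha)}/\tau_\nu$ by \eqref{eq:estim_heat_flow}. The one genuinely different ingredient is your computation of the Euclidean bridge integral: the paper parametrizes $\Rbar^{\nu,\bar p,\bar q}$ as an affine deformation $\bar Y_t=\bar X_t+(1-t)(\bar p-\bar X_0)+t(\bar q-\bar X_1)$ of an unconditioned Brownian motion, expands the quadratic sum of increments, and discards the negative cross-term $-\tau|\bar X_1-\bar X_0|^2$ to obtain an upper bound $(1-\alpha)^{-Nd/2}\exp(\alpha|\bar q-\bar p|^2/(2\nu))$; you instead write out the finite-dimensional Gaussian densities of the bridge, absorb the exponential of $\bar A_N$ into a rescaled heat kernel via the identity $\tau^{\R^d}_s(z)e^{\alpha|z|^2/(2s)}=(1-\alpha)^{-d/2}\tau^{\R^d}_{s/(1-\alpha)}(z)$, and collapse the chain by Chapman--Kolmogorov. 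This yields the sharper, exact value $(1-\alpha)^{-(N-1)d/2}\exp(\alpha|\bar q-\bar p|^2/(2\nu))$, which, when plugged into the mixture, cleanly produces the $(1-\alpha)^{-Nd/2}$ prefactor in front of $\tau_{\nu/(1-\alpha)}(y-x)/\tau_\nu(y-x)$ without the extra $(1-\alpha)^{-d/2}$ slack that the paper's chain of inequalities leaves behind. Your treatment is arguably tidier and yields exactly the stated constant. You also explicitly flag the regime $\nu/(1-\alpha)>1$, where the short-time Gaussian upper bound in \eqref{eq:estim_heat_flow} is unavailable (a point the paper silently skips over); your sketch of the fix is sound but would need one more line -- namely that when $\nu>1-\alpha$ one has $(1-\alpha)\dist^2(x,y)/(2\nu)\le \dist^2(x,y)/2\le d/8$, so the leftover $\exp(\dist^2/(2\nu))=\exp(\alpha\dist^2/(2\nu))\exp((1-\alpha)\dist^2/(2\nu))$ costs only a dimensional constant on top of the desired exponential. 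One trivial typo: the pushforward should be by the path-wise projection $\Pi$, not the point-wise $\pi$.
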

\noindent
In the whole space this would readily follow from explicit computations for Gaussian vectors. 
Working in the torus makes the proof slightly more intricate, and we postpone the proof to Appendix~\ref{section:brownian_torus} for convenience.\\

We also need to define a notion of translated Brownian bridges, which will play a crucial role in our construction of the recovery sequence $(P^{\nu})$. 
To do so, let us first denote by $\Pi$ the projection
\begin{equation*}
\Pi: \quad  \omega \in C([0,1]; \R^d) \quad  \mapsto \quad  \Big( t \mapsto \pi(\omega_t) \Big) \in C([0,1]; \T^d). 
\end{equation*}
Then, if $\omega \in C([0,1]; \T^d)$, we write $T_{\omega}$ for the translation map
\begin{equation}
\label{eq:def_translation}
T_{\omega}:\quad  \alpha \in C([0,1]; \T^d) \quad \mapsto \quad  \omega + \alpha \in C([0,1]; \T^d).
\end{equation}
Let $\Bbar^{\nu}\coloneqq \overline R^{\nu,0,0}$ be the standard Brownian bridge in $\R^d$ with diffusivity $\nu$ and joining $0$ to $0$.
Our translated bridges are defined as follows:
\begin{Def}
	\label{def:B^nu_omega}
	If $\omega \in C([0,1]; \T^d)$ and $\nu>0$, we set
	\begin{equation*}
	B^{\nu}_{\omega} \coloneqq  T_{\omega} {}\pf \Pi\pf \Bbar^{\nu}.
	\end{equation*}
\end{Def}
Roughly speaking, $B^{\nu}_{\omega}$ is obtained by adding the projection of the Brownian bridge to $\omega$.
Note however that the Brownian bridge in the torus is \emph{not} the projection of the Brownian bridge in $\R^d$, \emph{i.e} $R^{\nu, 0, 0} \neq \Pi \pf \Bbar^{\nu}$.
As a consequence, $B^{\nu}_{\omega} \neq T_{\omega} {}\pf R^{\nu, 0,0}$.
This alternative choice of translated bridges would have made the proof of Lemma~\ref{lem:entropy_translation_bridges_torus} below more delicate.

The entropy of $B^{\nu}_{\omega}$ with respect to the bridges of $R^{\nu}$ will be computed thanks to
\begin{Lem}
	\label{lem:entropy_translation_bridges_torus}
	There exists a dimensional constant $C = C_d>0$ such that, for all $\nu \leq 1$ and all $\omega \in C([0,1]; \T^d)$,
	\begin{equation}
	\label{eq:entropy_translation_bridges_torus}
	\nu H(B^{\nu}_{\omega} \, | \, R^{\nu, \omega_0, \omega_1}) \leq A(\omega) - \frac{1}{2} \dist^2(\omega_0,\omega_1) + C \nu.
	\end{equation}
\end{Lem}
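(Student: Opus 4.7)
The plan is to lift everything to the universal cover $\R^d$, where Cameron--Martin provides an explicit formula, and then project back to the torus via the data-processing inequality encoded in Proposition~\ref{prop:disintegration_entropy}. The bound is trivial when $A(\omega)=+\infty$, so I would assume $\omega\in AC^2$ from the start and fix any continuous lift $\bar\omega\in C([0,1];\R^d)$ of $\omega$; note that $A(\bar\omega)=A(\omega)$, $|\bar\omega_1-\bar\omega_0|\geq\dist(\omega_0,\omega_1)$, and the commutation $\Pi\circ T_{\bar\omega}=T_\omega\circ\Pi$ gives $B^\nu_\omega=\Pi\pf\mu$ with $\mu\coloneqq T_{\bar\omega}\pf\Bbar^\nu$, a random path in $\R^d$ going deterministically from $\bar\omega_0$ to $\bar\omega_1$.

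Next, writing $g_\nu(x)\coloneqq(2\pi\nu)^{-d/2}e^{-|x|^2/2\nu}$ for the standard Gaussian density on $\R^d$, I would split the $\R^d$-Brownian motion $\Rbar^\nu_{\bar\omega_0}$ according to which lift of $\omega_1$ it reaches to obtain
\begin{equation*}
R^{\nu,\omega_0,\omega_1}=\Pi\pf\theta,\qquad\theta\coloneqq\sum_{l\in\Z^d}w_l\,\Rbar^{\nu,\bar\omega_0,\bar\omega_1+l},\qquad w_l\coloneqq\frac{g_\nu(\bar\omega_1+l-\bar\omega_0)}{\tau_\nu(\omega_1-\omega_0)},
\end{equation*}
where $\sum_l w_l=1$ by \eqref{eq:explicit_heat_kernel}. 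Since $\mu$ concentrates on paths ending at $\bar\omega_1$, it is absolutely continuous with respect to $\theta$ only through the $l=0$ component, so (either directly or by Proposition~\ref{prop:entropy_wrt_fR} with $r=\Rbar^{\nu,\bar\omega_0,\bar\omega_1}$ and $f=w_0$) one gets
\begin{equation*}
H(\mu\,|\,\theta)=-\log w_0+H(\mu\,|\,\Rbar^{\nu,\bar\omega_0,\bar\omega_1}).
\end{equation*}
The data-processing inequality contained in Proposition~\ref{prop:disintegration_entropy} yields $H(B^\nu_\omega\,|\,R^{\nu,\omega_0,\omega_1})\leq H(\mu\,|\,\theta)$, so it remains to estimate both terms on the right.

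For the Cameron--Martin piece, I would factor $\Rbar^{\nu,\bar\omega_0,\bar\omega_1}=T_{\bar\eta}\pf\Bbar^\nu$ for the affine interpolation $\bar\eta_t\coloneqq(1-t)\bar\omega_0+t\bar\omega_1$ and apply Proposition~\ref{prop:entropy_one_to_one} to $T_{-\bar\eta}$ to reduce to $H(T_h\pf\Bbar^\nu\,|\,\Bbar^\nu)$ with $h\coloneqq\bar\omega-\bar\eta\in H^1_0([0,1];\R^d)$. The Cameron--Martin formula for Brownian bridges identifies this entropy with $\frac{1}{2\nu}\int_0^1|\dot h|^2\D t$, and expanding $|\dot h|^2=|\dot{\bar\omega}|^2-2\dot{\bar\omega}\cdot(\bar\omega_1-\bar\omega_0)+|\bar\omega_1-\bar\omega_0|^2$ and integrating in time gives the exact identity
\begin{equation*}
H(\mu\,|\,\Rbar^{\nu,\bar\omega_0,\bar\omega_1})=\frac{1}{\nu}\Big(A(\omega)-\tfrac{1}{2}|\bar\omega_1-\bar\omega_0|^2\Big).
\end{equation*}

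For the weight, the explicit value of $w_0$ combined with the upper Gaussian bound in \eqref{eq:estim_heat_flow} gives
\begin{equation*}
-\log w_0\leq\log K_d+\frac{|\bar\omega_1-\bar\omega_0|^2}{2\nu}-\frac{\dist^2(\omega_0,\omega_1)}{2\nu}.
\end{equation*}
The two $|\bar\omega_1-\bar\omega_0|^2/(2\nu)$ contributions then have opposite signs and cancel upon summation, leaving $\nu H(\mu\,|\,\theta)\leq A(\omega)-\tfrac{1}{2}\dist^2(\omega_0,\omega_1)+\nu\log K_d$, which for $\nu\leq 1$ is the desired inequality with $C\coloneqq\log K_d$. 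The main obstacle is precisely this cancellation: the quantity $|\bar\omega_1-\bar\omega_0|$ is a rigid invariant of $\omega$ that may be arbitrarily large (the path can wind around the torus many times), and only the exact match between the Cameron--Martin formula on a single $\R^d$-bridge and the Gaussian comparison for the torus heat kernel produces an error of the correct order $\mathcal{O}(\nu)$.
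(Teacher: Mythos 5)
Your proof is correct and follows essentially the same path as the paper's: decompose $R^{\nu,\omega_0,\omega_1}$ into $\R^d$-bridges via Lemma~\ref{lem:brownian_bridge}, isolate the $l=0$ component, apply Cameron--Martin for the bridge, and invoke the upper heat-kernel bound \eqref{eq:estim_heat_flow} to produce the $\mathcal O(\nu)$ error and the cancellation of the two $|\bar\omega_1-\bar\omega_0|^2/(2\nu)$ terms. The one technical difference is that the paper chooses the canonical lift $\oomega = I(\omega)$ (starting in the fundamental domain $i(\T^d)$) specifically so that Lemma~\ref{lem:Pi_one_to_one} applies and yields the exact equality $H(B^\nu_\omega\,|\,R^{\nu,\omega_0,\omega_1}) = H(\Tbar_{\oomega}\pf\Bbar^\nu\,|\,\overline B{}^{\nu,\omega_0,\omega_1})$, whereas you take an arbitrary lift and only use the one-sided data-processing inequality $H(\Pi\pf\mu\,|\,\Pi\pf\theta)\leq H(\mu\,|\,\theta)$ from Proposition~\ref{prop:disintegration_entropy}. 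Since the lemma is itself an upper bound, your relaxation costs nothing and spares the verification of the hypotheses of Lemma~\ref{lem:Pi_one_to_one} (the requirement that $\overline X_0 = i(\pi(\overline X_0))$ almost surely), which is a mild but genuine simplification. You also implicitly re-derive the paper's Lemma~\ref{lem:entropy_translation_bridge} in your Cameron--Martin step — the identity $H(\mu\,|\,\Rbar^{\nu,\bar\omega_0,\bar\omega_1}) = \frac{1}{\nu}\bigl(A(\omega)-\tfrac12|\bar\omega_1-\bar\omega_0|^2\bigr)$ is exactly that lemma after expanding $|\dot h|^2$ — so the overall architecture matches. One minor notational slip: you write $T$ for the translation on $\R^d$ in a few places where the paper's convention is $\Tbar$; this does not affect correctness.
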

\noindent
Once again, we postpone the proof to the appendix.
Note that, in the whole space $\R^d$, the corresponding result is stated in Lemma \ref{lem:entropy_translation_bridge} and follows from the classical Cameron-Martin formula.
In that case, equality holds in \eqref{eq:entropy_translation_bridges_torus} with $C = 0$.
\\

With this preliminary material at hand, let us first prove Theorem \ref{thm:GammaCv_process} using Theorem \ref{thm:general_GammaCv_process}:
\begin{proof}[Proof of Theorem \ref{thm:GammaCv_process}]
	Both $\Gamma-\liminf$ parts are a direct consequence of point \ref{item:general_Gamma-liminf} of Theorem \ref{thm:general_GammaCv_process}, regardless of any marginal constraint.
	
	For the $\Gamma-\limsup$ part in point \ref{item:GammaCv_no_marge}, fix an admissible generalized flow $P$ with marginals $P_{0,1}=\gamma$.
	By \cite[Thm.~2.7]{carlier2017convergence} there always exists a recovery sequence $\gamma^\nu\narrowcv \gamma$ for the optimal transport problem, \emph{i-e} satisfying \eqref{eq:recovery_Sch_OT} (see Footnote \ref{footnote:dvpt_log}).
	Thus by Theorem~\ref{thm:general_GammaCv_process} we can construct a recovery sequence $P^\nu\narrowcv P$ satisfying \eqref{eq:Gamma_limsup_Bro}.
	
	For the $\Gamma-\limsup$ part of point \ref{item:GammaCv_with_marge}, we claim that the particular sequence $\gamma^\nu=\gamma$ satisfies \eqref{eq:recovery_Sch_OT}.
	On this premise, Theorem~\ref{thm:general_GammaCv_process} immediately provides a recovery sequence $P^\nu\narrowcv P$ with marginals $P^\nu_{0,1}=\gamma$ and satisfying \eqref{eq:Gamma_limsup_Bro} as required, hence it suffices to check our claim.
	To this end, observe that the density $r^{\nu}_{0,1}$ of $R^{\nu}_{0,1}$ with respect to $\Leb \otimes \Leb$ is
	\begin{equation*}
	r^{\nu}_{0,1}(x,y) = \tau_{\nu}(y-x)
	\end{equation*}
	where as before, $(\tau_s)$ is the heat kernel in the torus at time $s$. 
	By Proposition~\ref{prop:entropy_wrt_fR} (with $f=r^\nu_{0,1}$ and $r=\Leb\otimes\Leb$) we have thus
	\begin{align*}
	 \nu H(\gamma\,|\, R^{\nu}_{0,1})&=\nu H(\gamma\,|\,\Leb\otimes\Leb)
	 - \int \nu \log r^{\nu}_{0,1}(x,y)\D\gamma(x,y)\\
	&=\nu H(\gamma\,|\,\Leb\otimes\Leb)
	 - \int \nu \log \tau_{\nu}(y-x) \D\gamma(x,y)\\
	 & \xrightarrow[\nu\to 0]{}
	 0 + \frac 12 \int \dist^2(x,y)\D\gamma(x,y),
	\end{align*}
	where the last line is obtained using our assumption $H(\gamma \, | \, \Leb \otimes \Leb) < + \infty$ as well as \eqref{eq:estim_heat_flow} (which implies in particular $\nu\log \tau_\nu(y-x)\to-\dist^2(x,y)$ uniformly on $\T^d\times\T^d$).
	Hence our claim holds and the proof is complete.
	\qed
\end{proof}

Let us now carry on with the proof of Theorem \ref{thm:general_GammaCv_process}, which will go through several steps.
We begin with
\begin{proof}[Proof of point \ref{item:general_Gamma-liminf} of Theorem \ref{thm:general_GammaCv_process}]
This result is weaker than the same statement without the incompressibility constraint.
Hence, it is direct a consequence of the works of C. L\'eonard, see \emph{e.g.} \cite[Prop.~2.5]{leonard2012schrodinger}.
For the sake of self-completeness we choose to present here an independent proof, fully leveraging the explicit structure of the reversible Brownian motion $R^\nu$ as a particular reference measure (whereas C. L\'eonard covers much more general settings).
We will also recycle part of the argument later on in the proof of \ref{item:general_recovery_flow_P_nu}$\implies$\ref{item:general_recovery_marginal_gamma_nu} in Theorem~\ref{thm:general_GammaCv_process}, hence we give the full details.

Consider a sequence $P^\nu\narrowcv P$ as in our statement.
Observe first that the Legendre transform of $h(u)=u\log u$ is $h^*(v)=e^{v-1}$, in particular
\begin{equation}
\label{eq:ineq_convexity_Legendre_log}
u\log u\geq uv-e^{v-1}\qquad \mbox{for all }u,v.
\end{equation}
Fix any bounded and continuous function $f(\omega)$ on $C([0,1];\T^d)$.
Taking $u=\frac{dP^\nu}{dR^\nu}(\omega),v=f(\omega)/\nu$ in the previous convexity inequality and integrating with respect to $R^\nu$, we get
\begin{align}
 \nu\Hbar_\nu(P^\nu)
 \notag &=\nu \int \frac{\D P^\nu}{\D R^\nu}(\omega)\log\frac{\D P^\nu}{\D R^\nu}(\omega) \D R^\nu (\omega)\\
 \notag &\geq \nu \int \frac{\D P^\nu}{\D R^\nu}(\omega) \frac{f(\omega)}{\nu}\D R^\nu(\omega)  -  \nu \int \exp\left(\frac{f(\omega)}{\nu}-1\right)\D R^\nu(\omega)\\
 \label{eq:ineq_Legrendre_Pnu} &= \int f(\omega)\D P^\nu(\omega) -\nu e^{-1}\int \exp\left(\frac{f(\omega)}{\nu}\right)\D R^\nu(\omega).
\end{align}
Ideally, one wishes to test $f=A$ (the kinetic action \eqref{eq:def_kinetic_action_path}) in this formula, and pass to the limit $\nu\to 0$ hoping that the exponential term $\nu e^{-1}\int\{\dots\}\to 0$ to conclude that $\liminf \nu\Hbar_\nu(P^\nu)\geq \int A \D P$.
However this is not possible since Brownian paths are nowhere differentiable hence $A(\omega)=+\infty$ for $R^\nu$ and $P^{\nu}$-almost all paths $\omega$. 
Instead, we use as a surrogate the difference quotient approximation $A_N$ defined in \eqref{eq:defAN}.

For technical reasons let us fix a parameter $\alpha\in(0,1)$ close to $1$, and recall that for large $N$ we write $\tau=1/N$ and $t_n=n\tau$.
Taking $f=\alpha A_N$ in \eqref{eq:ineq_Legrendre_Pnu}, the last integral in the r.h.s. is immediately bounded by Lemma~\ref{lem:estimation_int_AN_Bxy} as
$$
 \int \exp\left(\alpha \frac{A_N(\omega)}{\nu}\right)\D R^\nu(\omega)
 \leq \frac{1}{(1-\alpha)^{Nd/2}}.
$$
As a consequence we get
\begin{align*}
 \liminf\limits_{\nu\to 0}\,\nu\Hbar_\nu(P^\nu)
 & \geq \liminf\limits_{\nu\to 0}
 \left\{
 \alpha\int A_N(\omega)\D P^\nu(\omega)
 \right\}\\
 &\qquad -
 \limsup\limits_{\nu\to 0}
 \left\{
 \nu e^{-1}\int \exp\left(\alpha \frac{A_N(\omega)}{\nu}\right)\D R^\nu(\omega)
 \right\}\\
 & \geq \liminf\limits_{\nu\to 0}
 \left\{
 \alpha\int A_N(\omega)\D P^\nu(\omega)
 \right\} -
 \limsup\limits_{\nu\to 0}
 \left\{
 \frac{\nu e^{-1}}{(1 - \alpha)^{Nd/2}}\right\}\\
 & = \alpha\int A_N(\omega)\D P(\omega)-0
\end{align*}
because we assumed $P^\nu\narrowcv P$ (and $A_N$ is bounded continuous).
Taking next $\alpha\to 1$ and then $\liminf$ as $N\to\infty$ with pointwise convergence $A_N(\omega)\to A(\omega)$ for \emph{all} $\omega\in C([0,1];\T^d)$, we finally get by Fatou's lemma
$$
\liminf\limits_{\nu\to 0}\,\nu\Hbar_\nu(P^\nu)
\geq 1\cdot\liminf\limits_{N\to\infty}\int A_N(\omega )\D P(\omega)
\geq \int A(\omega)\D P(\omega)
=\A( P)
$$
and the proof is complete.
\qed
\end{proof}

We proceed now with the proof of the equivalence \ref{item:general_recovery_flow_P_nu}$\iff$\ref{item:general_recovery_marginal_gamma_nu} in Theorem~\ref{thm:general_GammaCv_process}.
In order to ease the exposition we opted for dividing the argument in two steps, one for each implication.
\begin{proof}[Proof of Theorem~\ref{thm:general_GammaCv_process},  \ref{item:general_recovery_flow_P_nu}$\implies$\ref{item:general_recovery_marginal_gamma_nu}]
 Assume that $P^\nu$ converges to $P$ and satisfies the $\Gamma-\limsup$ inequality \eqref{eq:Gamma_limsup_Bro}.
Disintegrating with respect to $(X_0,X_1)$, we have by Proposition~\ref{prop:disintegration_entropy}
 $$
 \Hbar_\nu(P^\nu)
 =  \nu H(\gamma^\nu\, | \, R^\nu_{0,1}) +  \nu\int H(P^{\nu,x,y}\, | \, R^{\nu,x,y})\D \gamma^\nu(x,y).
 $$
 Hence with our assumption \eqref{eq:Gamma_limsup_Bro} there holds
 \begin{align}
\notag \limsup\limits_{\nu\to 0}&\,\{\nu H(\gamma^\nu \, | \, R^\nu_{0,1}) \} \\
 \notag &\leq \limsup \limits_{\nu\to 0}\, \Hbar_\nu(P^\nu)
 -\liminf\limits_{\nu\to 0}\, \left\{
 \nu\int H(P^{\nu,x,y} \, | \, R^{\nu,x,y})\D \gamma^\nu(x,y)
 \right\}
 \\
  \label{eq:marginals_gamma_nu_entropy_disintegrated} &\leq \Abar(P)  -  \liminf\limits_{\nu\to 0}\, \left\{  \nu\int H(P^{\nu,x,y} \, | \, R^{\nu,x,y})\D \gamma^\nu(x,y)   \right\}.
 \end{align}
 In order to estimate the last integral we proceed using the same strategy as in the proof of point \ref{item:general_Gamma-liminf} of Theorem~\ref{thm:general_GammaCv_process} earlier.
 For large $N\in\mathbb N$ and $\tau=1/N$ we write again $t_n=n\tau$ for $n=0\dots N$, and consider now
 \begin{equation*}
 \hat A_N(\omega)\coloneqq 
 \frac{1}{2\tau}\sum\limits_{n=0}^{N-1}\dist^2(\omega_{t_n},\omega_{t_{n+1}}) - \frac{1}{2}\dist^2(\omega_{0},\omega_1)
 = A_N(\omega)- \frac{1}{2}\dist^2(\omega_{0},\omega_1).
\end{equation*} 
This function is continuous for the uniform topology on $C([0,1];\T^d)$, bounded, and it is as before a good approximation of
$$
\hat A(\omega)\coloneqq 
\left\{
\begin{array}{ll}
\frac{1}{2}\int_0^1|\dot\omega_t|^2\D t -\frac 12 \dist^2(\omega_0,\omega_1) & \mbox{if }\omega\in AC^2\\
+\infty & \mbox{otherwise}
\end{array}
\right. 
$$
as $N\to\infty$.
We fix as before a parameter $\alpha\in(0,1)$ close to $1$.
 Exploiting the convexity inequality \eqref{eq:ineq_convexity_Legendre_log} with $u=\frac{d P^{\nu,x,y}}{d R^{\nu,x,y}}(\omega)$ and $v=\frac{\alpha}{\nu}\hat A_N(\omega)$, integrating first with respect to $R^{\nu,x,y}$ (note that $\omega_0=x$ and $\omega_1=y$ for $R^{\nu,x,y}$-almost all $\omega$) and then with respect to $\gamma^\nu$, we get
 \begin{multline}
  \nu\int H(P^{\nu,x,y} \, | \, R^{\nu,x,y})\D \gamma^\nu(x,y) 
  \geq  \alpha  \iint \hat A_N(\omega)\D P^{\nu,x,y}(\omega)\D\gamma^\nu(x,y)\\
  - \nu e^{-1} \int \exp\left(-\frac{\alpha}{2\nu} \dist(x,y)^2 \right)\int \exp\left(\frac{\alpha}{\nu}A_N(\omega)\right) \D R^{\nu,x,y}(\omega)\D\gamma^\nu(x,y).
   \label{eq:ineq_Legrendre_Pnuxy}
 \end{multline}
 Since by assumption $P^\nu\narrowcv P$, and because $\hat A_N$ is bounded continuous, we see that the first term in the r.h.s
 \begin{equation}
 \label{eq:cv_linear_part}
 \iint \hat A_N(\omega)\D P^{\nu,x,y}(\omega)\D\gamma^\nu(x,y) = \int \hat A_{N}(\omega) \D P^{\nu}(\omega) \underset{\nu \to 0}{\longrightarrow} \int \hat A_{N}(\omega) \D P(\omega),
 \end{equation}
 thus it only remains to show that the $\limsup$ of the exponential term in \eqref{eq:ineq_Legrendre_Pnuxy} goes to zero (just like in the previous proof of point \ref{item:general_Gamma-liminf} of Theorem \ref{thm:general_GammaCv_process}).
This is where we need to use Lemma~\ref{lem:estimation_int_AN_Bxy}. From \eqref{eq:ineq_Legrendre_Pnuxy}\eqref{eq:cv_linear_part}\eqref{eq:estimation_int_AN_Bxy}, we get:
\begin{align*}
&\liminf_{\nu \to 0}  \left\{\nu\int H(P^{\nu,x,y} \, | \, R^{\nu,x,y})\D \gamma^\nu(x,y) \right\}\\
&\hspace{0.5cm}\geq \alpha \int \hat A_N(\omega) \D P(\omega) \\
& \hspace{1cm}
- \limsup_{\nu \to 0} \left\{
\nu e^{-1} \int \exp\left(-\frac{\alpha}{2\nu} \dist^2(x,y) \right)\frac{C_d \exp\left(\frac{\alpha}{2\nu}\dist^2(x,y)\right)}{(1-\alpha)^{Nd/2}}\D\gamma^\nu(x,y)
\right\}\\
&\hspace{0.5cm}\geq \alpha \int \hat A_N(\omega) \D P(\omega) - \limsup_{\nu \to 0} \left\{\nu e^{-1} \times \frac{C_d}{(1 - \alpha)^{Nd/2}}\right\}\\
&\hspace{0.5cm}= \alpha \int \hat A_N(\omega) \D P(\omega) .
\end{align*}
 Taking next $\alpha\to 1$ and $\liminf$ as $N\to\infty$ with now
 $$
 \hat A_N(\omega)=A_N(\omega)-\frac{1}{2}\dist^2(\omega_0,\omega_1)\to A(\omega)-\frac{1}{2}\dist^2(\omega_0,\omega_1)
 $$
 for all $\omega$, we conclude by Fatou's lemma that
 \begin{multline*}
 \liminf\limits_{\nu\to 0}\,\left\{\nu\int H(P^{\nu,x,y} \, | \, R^{\nu,x,y})\D \gamma^\nu(x,y)\right\}\\
 \geq \int \left\{A(\omega)-\frac{1}{2}\dist^2(\omega_0,\omega_1)\right\}\D P(\omega)=\Abar(P)-\frac{1}{2}\int \dist^2(x,y)\D\gamma(x,y).
 \end{multline*}
Substituting into \eqref{eq:marginals_gamma_nu_entropy_disintegrated} finally gives \eqref{eq:recovery_Sch_OT} and concludes the proof.
\qed
\end{proof}
Let us now establish the converse implication:
\begin{proof}[Proof of Theorem~\ref{thm:general_GammaCv_process}, \ref{item:general_recovery_marginal_gamma_nu}$\implies$\ref{item:general_recovery_flow_P_nu}]
	Take $(\gamma^\nu)_{\nu>0}$ as in our statement.
	Since we have $\gamma^\nu \narrowcv \gamma$, a closer look into the proof of \cite[Thm.~1]{baradat2018continuous} (continuity of the optimal action in Brenier's problem $\REu(\gamma)$ with respect to the marginal $\gamma$) gives a sequence $Q^{\nu}$ of generalized flows converging to $P$ such that $Q^{\nu}$ is admissible for $\REu(\gamma^\nu)$ and 
	\begin{equation}
	\label{eq:cv_action}
	\lim_{\nu \to 0} \Abar(Q^{\nu}) = \Abar(P).
	\end{equation}
	Let now
	\begin{equation}
	\label{eq:def_P^nu}
	P^{\nu} \coloneqq  \int B^{\nu}_{\omega} \D Q^{\nu}(\omega),
	\end{equation}
	with $B^{\nu}_{\omega}$ as in Definition~\ref{def:B^nu_omega}.
	Roughly speaking, $P^\nu$ is a noisy version of $Q^\nu$, where all the paths initially charged by $Q^\nu$ receive now an additional small Brownian perturbation.
	
	First of all, we claim that $P^{\nu} \narrowcv P$ as $\nu\to 0$.
	Indeed, if $\varphi$ is a test function on $C([0,1]; \T^d)$, let us check that
	\begin{equation*}
	\varphi^\nu: \omega \, \mapsto \, \int \varphi(\alpha) \D B^{\nu}_{\omega}(\alpha)
	\end{equation*}
	converges uniformly towards $\varphi$ on the compact sets of $C([0,1]; \T^d)$.
	Indeed if $K$ is such a compact set, take  $m:\R_+ \to \R_+$ a modulus of continuity of $\varphi|_K$.
	Of course, $m$ can be chosen continuous and bounded with $m(0)=0$.
	Then, for all $\omega \in K$,
	\begin{align*}
	|\varphi^{\nu}(\omega) - \varphi(\omega) | &\leq \int |\varphi(\alpha) - \varphi(\omega)| \D B^{\nu}_{\omega}(\alpha)\\
	&= \int |\varphi(\omega + \alpha) - \varphi(\omega)| \D B_0^{\nu}(\alpha)
	\leq \int m(\| \alpha \|_{\infty}) \D B^{\nu}_0(\alpha).
	\end{align*} 
	Since $m$ is bounded and continuous so is the map $\alpha\mapsto m(\|\alpha\|_\infty)$ for the uniform topology on $C([0,1]; \T^d)$, and it is therefore an admissible test-function for the narrow convergence $B^{\nu}_0 \narrowcv \delta_0$ in $\mathcal M(C([0,1]; \T^d))$.
	As a consequence the right-hand side converges to $\int m(\| \alpha \|_{\infty}) \D \delta_0(\alpha)=m(0)=0$ and therefore $\|\varphi^\nu-\varphi\|_\infty\to 0$ as required.
	Since $Q^{\nu} \narrowcv P$ we get now
	\begin{align*}
	\int \varphi(\alpha)\D P^\nu(\alpha)
	&=\int\left(\int\varphi(\alpha)\D B^\nu_\omega(\alpha)\right)\D Q^\nu(\omega)\\
	&= \int \varphi^{\nu}(\omega) \D Q^{\nu}(\omega)
	\quad \xrightarrow[\nu \to 0]{} \quad
	\int \varphi(\omega)\D P(\omega)
	\end{align*}
	as claimed.
	
	Moreover, it is straightforward to check that the marginal constraint is satisfied, $(X_0, X_1)\pf P^{\nu} = \gamma^\nu$.
	Let us check now the incompressibility.
	Take $\varphi$ a test function on $\T^d$: by Definition~\ref{def:B^nu_omega} of $B^\nu_\omega$ and by incompressibility of $Q^{\nu}$, we have for all $t\in[0,1]$
	\begin{align*}
	\int \varphi(\alpha_t)\D P^\nu (\alpha)
	&= \iint \varphi(\alpha_t)\D B^\nu_\omega(\alpha) \D Q^{\nu}(\omega) \\
	&= \iint \varphi(\pi(\overline\alpha_t)+\omega_t)\D \overline B{}^\nu(\overline\alpha) \D Q^{\nu}(\omega)\\
	&= \iint \varphi(\pi(\overline\alpha_t)+\omega_t) \D Q^{\nu}(\omega)\D  \overline B{}^\nu(\overline\alpha)\\
	&= \iint \varphi(\pi(\overline\alpha_t)+x) \D x \D \overline B{}^\nu(\overline\alpha)\\
	&= \iint \varphi(x')  \D x'\,\D\overline B{}^\nu(\overline\alpha)
	= \int \varphi(x')   \D x'.
	\end{align*}
	Next, let us estimate $\Hbar_\nu(P^\nu)=\nu H(P^{\nu}\, | \,  R^{\nu})$.
	Conditioning on the endpoints, we get by Proposition~\ref{prop:disintegration_entropy}
	\begin{equation}
	\label{eq:disintegration_endpoints_P^nu}
	H(P^{\nu} \, | \,  R^{\nu}) = H(\gamma^\nu \, | \, R^{\nu}_{0,1}) + \int H(P^{\nu,x,y} \, | \, R^{\nu,x,y} ) \D \gamma^\nu(x,y).
	\end{equation}
	Moreover, conditioning \eqref{eq:def_P^nu}, we get for $\gamma^\nu$-almost all $(x,y)\in \T^d\times\T^d$:
	\begin{equation*}
	P^{\nu, x,y} = \int B^{\nu}_{\omega} \D Q^{\nu, x, y}(\omega).
	\end{equation*}
	In particular for $\gamma^\nu$-almost all $(x,y) \in \T^d\times\T^d$, by Jensen's inequality, and because $Q^{\nu, x, y}$-almost surely $\omega_0 = x$ and $\omega_1 = y$, there holds
	\begin{equation*}
	H(P^{\nu, x,y} \, |\, R^{\nu, x, y}) \! \leq \! \int \hspace{-2pt} H(B^{\nu}_{\omega} \, |\,  R^{\nu, x, y}) \D Q^{\nu, x, y}(\omega) \! =\hspace{-3pt} \int \hspace{-2pt} H(B^{\nu}_{\omega} \, | \, R^{\nu, \omega_0, \omega_1}) \D Q^{\nu, x, y}(\omega).
	\end{equation*}
	Substituting this inequality in formula \eqref{eq:disintegration_endpoints_P^nu}, we get:
	\begin{align*}
	H(P^{\nu} \, | \, R^{\nu}) &\leq 
	H(\gamma^\nu \,| \, R^{\nu}_{0,1}) + \iint H(B^{\nu}_{\omega} \, | \, R^{\nu, \omega_0, \omega_1}) \D Q^{\nu, x, y}(\omega)\D \gamma^\nu(x,y)
	\\
	&=	H(\gamma^\nu \, | \, R^{\nu}_{0,1}) + \int H(B^{\nu}_{\omega}\,|\, R^{\nu,\omega_0, \omega_1} ) \D Q^{\nu}(\omega).
	\end{align*}
Multiplying by $\nu$ and using \eqref{eq:entropy_translation_bridges_torus}, we get for $\nu <1$:
\begin{align*}
	\nu H(P^{\nu} \, | \, R^{\nu}) &\leq \nu H(\gamma^\nu \, | \, R^{\nu}_{0,1}) + \int \left\{ A(\omega) - \frac{1}{2} \dist^2(\omega_0, \omega_1) + C \nu \right\} \D Q^{\nu}(\omega)\\
	&= \nu H(\gamma^\nu \, | \, R^{\nu}_{0,1}) + \Abar(Q^{\nu}) - \frac{1}{2} \int \dist^2(x,y) \D \gamma^{\nu}(x,y) + C \nu,
\end{align*}
where $C$ is a dimensional constant.
	With our assumption \eqref{eq:recovery_Sch_OT} and by \eqref{eq:cv_action}, we finally obtain
	\begin{align*}
	\limsup_{\nu \to 0}& \,\Hbar_{\nu}  (P^{\nu}) \\
	&= \limsup_{\nu \to 0}\, \nu H(P^{\nu} \, | \, R^{\nu}) \\
	&\leq \limsup_{\nu \to 0} \,\left\{ \nu H(\gamma^\nu \, | \, R^{\nu}_{0,1}) + \Abar(Q^{\nu}) - \frac{1}{2} \int \dist^2(x,y) \D \gamma^{\nu}(x,y) + C \nu \right\}\\
	&= \limsup_{\nu \to 0 } \,\nu H(\gamma^\nu \, | \, R^{\nu}_{0,1}) + \lim_{\nu \to 0} \Abar(Q^{\nu}) - \lim_{\nu \to 0}\frac{1}{2} \int \dist^2(x,y) \D \gamma^{\nu}(x,y)\\
	&\leq \frac{1}{2}\int \dist(x,y)^2\D\gamma(x,y)  +  \Abar(P) - \frac{1}{2}\int \dist(x,y)^2\D\gamma(x,y)
	\end{align*}
	and the proof is complete.
	\qed
\end{proof}

%%%%%%%%%%%%%%%%%%%%%%%%%%%%%%%%%%%%%%%%%%%%%%%%%%%%%%%%%%%%%%%%%%%%%%%%%%%%%%%%%%%%%%%%%%%%%%%%%%%%%%%%%%%%%%%%%%%%%%%%
%%%%%%%%%%%%%%%%%%%%%%%%%%%%%%%%%%%%%%%%%%%%%%%%%%%%%%%%%%%%%%%%%%%%%%%%%%%%%%%%%%%%%%%%%%%%%%%%%%%%%%%%%%%%%%%%%%%%%%%%

\section{Convergence of $\Sch$ towards $\OT$}
\label{section:convergence_Sch_OT}
Here we give a new proof of the convergence of \emph{entropic} optimal transport towards \emph{deterministic} optimal transport as the diffusivity $\nu\to 0$, Theorem~\ref{thm:CV_Sch_OT}.
We stress again that the result itself is not new \cite{mikami2004monge,leonard2012schrodinger,cuturi2013sinkhorn,carlier2017convergence}, but our proof only relies on elementary PDE arguments and we believe it is worth including the details for the sake of completeness.

Before going into the proof we shall need a fundamental regularization procedure (Lemma~\ref{lem:fund} below), to be used repeatedly in the sequel.
To motivate the approach, observe that, in the previous section, the key step was the construction of a suitable recovery sequence $P^\nu\narrowcv P$ by means of Brownian bridges -- see in particular \eqref{eq:def_P^nu}.
Our regularization below will simply consist in a similar construction at the PDE level.

More precisely, recall that we write $\tau_s(x)$ for the heat kernel at time $s>0$
$$
\partial_s\tau_s=\frac 12\Delta\tau_s
$$
started from the initial Dirac distribution $\tau_0=\delta_0$.
For a given curve $\rho\in C([0,1];\P(\T^d))$ and diffusivity parameter $\nu>0$ we shall always write $\rho^\nu\in C([0,1];\P(\T^d))$ for the curve defined by
\begin{equation}
\label{eq:def_rho_nu_t}
t \quad \mapsto \quad \rho^\nu_t\coloneqq \rho_t * \tau_{\nu t(1-t)}\in\P(\T^d),
\end{equation}
where the convolution only acts in space.
In other words $\rho^\nu_t$ is defined as the solution of the heat flow at time $s=\nu t(1-t)$ started from $\rho_t$ at time $s=0$, and in particular $\rho^\nu$ has the same endpoints as $\rho$
$$
\rho^\nu_0=\rho_0 \qquad \mbox{and}\qquad \rho^\nu_1=\rho_1.
$$
Our regularity estimate takes the following quantitative form:
\begin{Lem}
 \label{lem:fund}
For all $\rho\in C([0,1];\P(\T^d))$, defining $\rho^\nu$ as in \eqref{eq:def_rho_nu_t}, there holds
 \begin{multline}
\label{eq:fund_weight}
\A(\rho^\nu)  +  \frac {\nu^2}{2} \int_0^1 \left(t -\frac 12\right)^2\int |\nabla\log\rho^\nu_t|^2 \D\rho^\nu_t \D t  + \nu \int_0^1 H(\rho^\nu_t)\D t
\\
\leq
\A(\rho) +\nu\frac{H(\rho_0)+H(\rho_1)}{2}.
\end{multline}
Moreover, there exists a dimensional constant $C=C_d>0$ such that,
 \begin{multline}
  \label{eq:fund_unweight}
  \A(\rho^\nu)  +  \frac {\nu^2}{8} \int_0^1 \hspace{-5pt} \int |\nabla\log\rho^\nu_t|^2 \D\rho^\nu_t \D t  + \nu \int_0^1 H(\rho^\nu_t)\D t
\\
\leq
\A(\rho) +\nu\left[
\frac{H(\rho_0)+H(\rho_1)}{2}
+C
\right].
 \end{multline}
 Similarly, for $\alpha>0$ the entropic version holds as:
 \begin{multline}
\label{eq:fund_weight_Halpha}
\H_\alpha(\rho^\nu)  +  \frac {\nu^2}{2} \int_0^1 \left(t -\frac 12\right)^2\int |\nabla\log\rho^\nu_t|^2 \D\rho^\nu_t \D t  + \nu \int_0^1 H(\rho^\nu_t)\D t
\\
\leq
\H_\alpha(\rho) +\nu\frac{H(\rho_0)+H(\rho_1)}{2}
\end{multline}
 and
 \begin{multline}
  \label{eq:fund_unweight_Halpha}
  \H_\alpha(\rho^\nu)  +  \frac {\nu^2}{8} \int_0^1 \hspace{-5pt} \int |\nabla\log\rho^\nu_t|^2 \D\rho^\nu_t \D t  + \nu \int_0^1 H(\rho^\nu_t)\D t
\\
\leq
\H_\alpha(\rho) +\nu\left[
\frac{H(\rho_0)+H(\rho_1)}{2}
+C
\right].
 \end{multline}
\end{Lem}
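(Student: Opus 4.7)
The plan is to derive an explicit continuity equation satisfied by $\rho^\nu$ and then apply the Benamou--Brenier formula of Theorem~\ref{thm:characterization_AC2}. I would assume $\A(\rho)<+\infty$ (otherwise the bound is trivial) and let $c_t$ be the minimal velocity given by that theorem. Differentiating under the convolution and using $\partial_s\tau_s=\tfrac{1}{2}\Delta\tau_s$ together with the fact that the time-derivative of the variance $\nu t(1-t)$ equals $\nu(1-2t)$ gives
\begin{equation*}
\partial_t\rho^\nu_t+\Div(\rho^\nu_t c^\nu_t)=0, \qquad c^\nu_t = \tilde c^\nu_t - \frac{\nu(1-2t)}{2}\nabla\log\rho^\nu_t, \qquad \tilde c^\nu_t \coloneqq \frac{(\rho_t c_t)*\tau_{\nu t(1-t)}}{\rho^\nu_t}.
\end{equation*}
Setting $\mathcal{F}_t\coloneqq\int|\nabla\log\rho^\nu_t|^2\D\rho^\nu_t$, expanding $|c^\nu_t|^2$ and applying Benamou--Brenier produces three pieces: a pure convective one in $|\tilde c^\nu_t|^2$, a pure osmotic one contributing exactly $\frac{\nu^2}{2}\int_0^1(t-\tfrac{1}{2})^2\mathcal{F}_t\D t$, and a cross term.

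The convective piece is handled by Jensen's inequality for the convex functional $(\rho,m)\mapsto|m|^2/\rho$, which is stable under convolution: this yields the pointwise bound $\int|\tilde c^\nu_t|^2\D\rho^\nu_t\leq\int|c_t|^2\D\rho_t$, and consequently its half-time-integral is bounded by $\A(\rho)$. For the cross term I would compute the time-derivative of the entropy along $\rho^\nu$ directly from the continuity equation; the resulting identity
\begin{equation*}
\frac{d}{dt}H(\rho^\nu_t) = \int\tilde c^\nu_t\cdot\nabla\log\rho^\nu_t\,\D\rho^\nu_t - \frac{\nu(1-2t)}{2}\mathcal{F}_t
\end{equation*}
expresses the integrand of the cross term as $\tfrac{d}{dt}H(\rho^\nu_t)+\tfrac{\nu(1-2t)}{2}\mathcal{F}_t$ up to the prefactor $-\nu(1-2t)/2$. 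Integration by parts in time against $1-2t$ produces the boundary contribution $\tfrac{\nu}{2}(H(\rho_0)+H(\rho_1))$ (using $\rho^\nu_0=\rho_0$, $\rho^\nu_1=\rho_1$), the bulk term $-\nu\int_0^1 H(\rho^\nu_t)\D t$, and a further quadratic piece that exactly cancels half of the osmotic contribution, leaving precisely the weighted inequality \eqref{eq:fund_weight}.

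For \eqref{eq:fund_unweight} I would use the algebraic identity $(t-\tfrac{1}{2})^2+t(1-t)=\tfrac{1}{4}$ to write
\begin{equation*}
\frac{\nu^2}{8}\int_0^1\mathcal{F}_t\D t = \frac{\nu^2}{2}\int_0^1\bigl(t-\tfrac{1}{2}\bigr)^2\mathcal{F}_t\D t + \frac{\nu^2}{2}\int_0^1 t(1-t)\mathcal{F}_t\D t,
\end{equation*}
so it only remains to bound the last integral by $C\nu$. This is where I expect the main technical obstacle: one needs the a priori bound $\mathcal{F}_t\leq C_d/(\nu t(1-t))$ for $\rho^\nu_t=\rho_t*\tau_{\nu t(1-t)}$, whose prefactor then cancels the singular factor $1/[\nu t(1-t)]$ in the integrand. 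On the torus this follows from a Stam-type subadditivity of Fisher information under convolution combined with the direct computation $\int|\nabla\log\tau_s|^2\,\tau_s\leq C_d/s$, itself a consequence of the two-sided Gaussian estimate \eqref{eq:estim_heat_flow}.

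Finally the entropic versions \eqref{eq:fund_weight_Halpha} and \eqref{eq:fund_unweight_Halpha} reduce at once to the previous ones: decomposing $\H_\alpha(\rho^\nu)=\A(\rho^\nu)+\alpha^2\F(\rho^\nu)$ and invoking the nonincrease of Fisher information under convolution with $\tau_s$, once more a consequence of Stam, gives $\F(\rho^\nu_t)\leq\F(\rho_t)$ for every $t$, hence $\H_\alpha(\rho^\nu)-\A(\rho^\nu)\leq\alpha^2\F(\rho)$; adding this inequality to \eqref{eq:fund_weight}/\eqref{eq:fund_unweight} yields the same bounds with $\A$ replaced by $\H_\alpha$ on both sides.
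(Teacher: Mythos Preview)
Your proposal is correct and follows essentially the same route as the paper. Two minor differences are worth noting. First, you expand $|c^\nu_t|^2$ keeping the cross term in $\tilde c^\nu_t\cdot\nabla\log\rho^\nu_t$, which forces you to substitute the entropy-derivative identity and then track an extra quadratic piece in $\mathcal F_t$; the paper instead expands so that the cross term is directly $c^\nu_t\cdot\nabla\log\rho^\nu_t=\frac{d}{dt}H(\rho^\nu_t)$, avoiding that bookkeeping (your phrase ``cancels half of the osmotic contribution'' is slightly imprecise---the extra piece is actually twice the osmotic term with opposite sign---but the outcome you describe is correct). Second, for the bound $\mathcal F_t\leq C_d/(\nu t(1-t))$ you invoke Stam subadditivity together with an explicit estimate of $F(\tau_s)$, whereas the paper appeals to the Li--Yau inequality, which gives the sharp decay $\int|\nabla\log\tilde\rho_s|^2\,\D\tilde\rho_s\leq d/(2s)$ along the heat flow on the torus in one stroke and spares you from estimating $|\nabla\log\tau_s|$ via the Gaussian bounds.
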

\noindent
Note that all four right-hand sides are allowed to be infinite, in which case our statement is vacuous.
\begin{proof}
Let us start with \eqref{eq:fund_weight}.
We can always assume that $\rho$ has regularity $AC^2([0,T];\P(\T^d))$, since otherwise $\A(\rho)=+\infty$ in the r.h.s.
 By theorem~\ref{thm:characterization_AC2} there exists a velocity field $c\in L^2(\D t\otimes \rho_t)$ such that $\A(\rho)=\frac 12\int_0^1\int |c_t|^2\rho_t\D t$.
 Defining the classical regularization
 \begin{equation}
 \label{eq:regul_momentum}
 \hat c^\nu_t\coloneqq \frac{(\rho_t c_t)*\tau_{\nu t(1-t)}}{\rho_t*\tau_{\nu t(1-t)}}=\frac{(\rho_t c_t)*\tau_{\nu t(1-t)}}{\rho^\nu_t},
 \end{equation}
 it is easy to check that
 $$
 \partial_t\rhont + \Div(\rhont \hat c^\nu_t) =-\nu \left(t-1/2\right)\Delta\rhont
 $$
 at least in the sense of distributions.
 (This is a first reason why we defined the regularization \eqref{eq:regul_momentum} as acting on the momentum variable $m^\nu=\rho^\nu_t\hat c^\nu_t=(\rho_t c_t)*\tau_{\nu t(1-t)}$, rather than directly on the velocities.)
 The extra Laplacian in the right-hand side arises because the regularizing kernel $\tau_{\nu t(1-t)}$ is not fixed but depends on time.
Setting moreover
\begin{equation}
\label{eq:def_cnu}
 c^\nu_t\coloneqq \hat c^\nu_t -\nu  \left(t-1/2\right)\nabla\log\rhont 
\end{equation}
and recalling that $\Div(\rho\nabla\log\rho)=\Div\left(\rho\frac{\nabla\rho}{\rho}\right)=\Delta\rho$, we have now
$$
 \partial_t\rhont + \Div(\rhont  c^\nu_t) =0,
 $$
 hence by definition of the kinetic action \eqref{eq:defA} and of the metric speed \eqref{eq:def_cmin_speed_MK}
 \begin{equation}
 \label{eq:action_rho_leq_cnu}
 \A(\rho^\nu)\leq \frac 12 \int _0^1 \hspace{-5pt} \int |c^\nu_t|^2\D\rho^\nu_t \D t.
 \end{equation}
 Since $\tau_{\nu t(1-t)}$ is a probability measure and $(\rho,m)\mapsto \frac{|m|^2}{2\rho}$ is jointly convex, an immediate application of Jensen's inequality gives
 \begin{equation}
 \label{eq:Jensen_action}
  \frac 12 \int_0^1 \hspace{-5pt}\int |\hat c^\nu_t|^2\D\rho^\nu_t \D t
 \leq \frac 12 \int_0^1 \hspace{-5pt} \int |c_t|^2 \D\rho_t \D t
 =\A(\rho)
 \end{equation}
 (This is another reason for the particular definition of $\hat{c}^\nu$.)
 Gathering \eqref{eq:action_rho_leq_cnu}\eqref{eq:Jensen_action} and exploiting \eqref{eq:def_cnu} to expand $|\hat c^\nu_t|^2$, we find
 \begin{multline}
 \label{eq:fund_weight_intermed}
\A(\rho^\nu)  +  \frac {\nu^2}{2} \int_0^1 \left(t -\frac 12\right)^2\int |\nabla\log\rho^\nu_t|^2 \D\rho^\nu_t \D t 
\\
\leq
\A(\rho) +\nu\int_0^1 \left(t -\frac 12\right)  \int \nabla\log\rhont\cdot c^\nu_t\,\D\rho^\nu_t\D t.
 \end{multline}
Writing next
\begin{equation*}
\frac{d}{d t}H(\rhont) = \int(1 + \log\rhont) \partial_t\rhont =  -\int(1 + \log\rhont) \Div(\rhont c^\nu_t)
=  \int \nabla\log \rhont\cdot \,c^\nu_t\,\D\rho^\nu_t\D t
\end{equation*}
we obtain after integration by parts
\begin{align*}
\int_0^1 \left(t -\frac 12\right) \int \nabla\log\rhont\cdot c^\nu_t\,\D\rho^\nu_t\D t
&= \int_0^1 \left(t -\frac 12\right) \frac{d}{dt}H(\rhont)\,\D t\\
&= \frac{H(\rho_0^\nu) + H(\rho_1^\nu)}{2}-\int_0^1 H(\rhont)\D t\\
&= \frac{H(\rho_0) + H(\rho_1)}{2}-\int_0^1 H(\rhont)\D t.
\end{align*}
Here we crucially used the fact that the endpoints $\rho^\nu_0=\rho_0$ and $\rho^\nu_1=\rho_1$ remain unchanged.
Our first estimate \eqref{eq:fund_weight} immediately follows by substituting this identity in \eqref{eq:fund_weight_intermed}.

To get \eqref{eq:fund_unweight}, we first add $\frac {\nu^2} 2\int_0^1 t(1-t)\int |\nabla\log\rhont|^2 \D\rho^\nu_t \D t$ to both sides of \eqref{eq:fund_weight} and use the algebraic identity $t(1-t)+(t-1/2)^2=1/4$ to get
\begin{multline}
 \label{eq:fund_unweight_intermed}
 \A(\rho^\nu)  +  \frac {\nu^2}{8} \int_0^1 \hspace{-5pt} \int |\nabla\log\rho^\nu_t|^2  \D\rho^\nu_t \D t + \nu \int_0^1 H(\rho^\nu_t)\D t \\
 \leq \A(\rho) +\nu\frac{H(\rho_0)+H(\rho_1)}{2} + \nu \int_0^1 \nu\frac{t(1-t)}{2}\int |\nabla\log\rhont|^2\, \D\rho^\nu_t \D t,
\end{multline}
and it only remains to control the last term in the right-hand side.
By the celebrated Li-Yau inequality \cite[Thm.~1.1]{li1986parabolic}, the Fisher information decays at a universal rate along the heat flow uniformly in the initial datum, here (with $\tilde\rho_s=\tau_s *\tilde\rho_0$):
$$
\int |\nabla\log \tilde\rho_s|^2 \D\tilde\rho_s \leq \frac d{2s}
\qquad \forall\, \tilde\rho_0\in\P(\T^d),\,\forall\,s>0.
$$
Recalling that, by definition, $\rhont$ is the solution at time $s=\nu t(1-t)$ of the heat flow started from $\tilde\rho_0=\rho_t$, the last term in \eqref{eq:fund_unweight_intermed} can thus be controlled as
\begin{equation*}
 \nu \int_0^1 \nu\frac{t(1-t)}{2}\int |\nabla\log\rhont|^2\, \D\rho^\nu_t \D t
 \leq \nu \int_0^1 \nu\frac{t(1-t)}{2}\cdot\frac{d}{2\nu t(1-t)} \D t
 \leq C_d\nu
\end{equation*}
and \eqref{eq:fund_unweight} follows.

As for \eqref{eq:fund_weight_Halpha}\eqref{eq:fund_unweight_Halpha}, we recall that the Fisher information
\begin{equation*}
F(\tilde\rho_s)=\frac 18\int |\nabla\log\tilde\rho_s|^2  \D\tilde\rho_s
\end{equation*}
is nonincreasing along the heat flow (by the same Jensen's inequality used in \eqref{eq:Jensen_action}).
In our particular setting this gives $F(\rhont)\leq F(\rho_t)$ for all $t\in[0,1]$.
The result immediately follows by  adding $\alpha^2\F(\rho^\nu)\leq \alpha^2\F(\rho)$ to \eqref{eq:fund_weight} and \eqref{eq:fund_unweight}, respectively, and the proof is complete.
\qed
\end{proof}

We are now in position of proving the convergence of $\Sch_\nu$ towards $\OT$.
\begin{proof}[Proof of Theorem~\ref{thm:CV_Sch_OT}]
The $\Gamma-\liminf$ is obvious, as $\H_\nu=\A+\nu^2\F\geq \A$ and $\A$ is lower semicontinuous.
Let us therefore consider the $\Gamma-\limsup$, and fix $\rho\in C([0,1];\P(\T^d))$ with endpoints $\rho_0,\rho_1$.
We can always assume that $\rho\in AC^2$, otherwise there is nothing to prove.

For $\rho\in AC^2([0,1];\P(\T^d))$, we claim that $(\rho^\nu)_{\nu>0}$ defined in \eqref{eq:def_rho_nu_t} is an admissible recovery sequence.
Indeed, as already discussed $\rho^\nu$ has same endpoints $\rho_0,\rho_1$ as $\rho$.
Moreover from \eqref{eq:fund_unweight} we get
$$
\H_\nu(\rho^\nu)=\A(\rho^\nu)+\nu^2\F(\rho^\nu)\leq \A(\rho) +\nu\left[
\frac{H(\rho_0)+H(\rho_1)}{2}
+C
\right],
$$
and taking the $\limsup$ gives
$$
\limsup\limits_{\nu\to 0}\,\H_\nu(\rho^\nu)
\leq \A(\rho)
$$
as required.
\qed
\end{proof}
\begin{Rem}
 From this proof it is clear that, apart from the static entropy term $\nu\frac{H(\rho_0)+H(\rho_1)}{2}$, the purely dynamical $\limsup$-gap for the $\Gamma$-convergence $\H_\nu\to\OT$ is of order at most $C\nu$ for a dimensional constant $C=C_d$.
\end{Rem}
\smallskip
\begin{Rem}
	\label{rem:marginal_Sch_OT}
	When $H(\rho_0)$ or $H(\rho_1)$ is infinite, the $\Gamma$-convergence \eqref{eq:Gamma_lim_Sch_OT} cannot hold because $\H_\nu + \iota_{(\rho_0,\rho_1)} \equiv + \infty$ for all $\nu>0$.
	However, similarly to the scenario of Theorem~\ref{thm:general_GammaCv_process}, it is still possible to prove
	\begin{equation*}
	\Gamma-\lim\limits_{\nu\to 0}\, \H_\nu 
	= \A ,
	\end{equation*}
	and more precisely
	\begin{equation*}
	\Gamma-\lim\limits_{\nu\to 0}\,\Big\{ \H_\nu + \iota_{(\rho^{\nu}_0,\rho^{\nu}_1)} \Big\}
	= \A + \iota_{(\rho_0,\rho_1)}
	\end{equation*}
	if and only if $\rho_0^\nu \narrowcv \rho_0$, $\rho_1^\nu \narrowcv \rho_1$ and
	\begin{equation*}
	\lim_{\nu \to 0} \nu H(\rho_0^{\nu}) = 0 \qquad \mbox{and} \qquad	\lim_{\nu \to 0} \nu H(\rho_1^{\nu}) = 0.
	\end{equation*}
	Such $(\rho_0^{\nu})$ and $(\rho_1^{\nu})$ are easy to build for instance by convolution.
\end{Rem}

%%%%%%%%%%%%%%%%%%%%%%%%%%%%%%%%%%%%%%%%%%%%%%%%%%%%%%%%%%%%%%%%%%%%%%%%%%%%%%%%%%%%%%%%%%%%%%%%%%%%%%%%%%%%%%%%%%%%%%%%
%%%%%%%%%%%%%%%%%%%%%%%%%%%%%%%%%%%%%%%%%%%%%%%%%%%%%%%%%%%%%%%%%%%%%%%%%%%%%%%%%%%%%%%%%%%%%%%%%%%%%%%%%%%%%%%%%%%%%%%%

\section{Convergence of $\MBro$ towards $\MREu$}
\label{section:convergence_MBro_MREu}
Here we prove the $\Gamma$-convergence in Theorem~\ref{thm:GammaCv_multiphase} as well as the convergence of the pressures associated with the incompressibility constraints, Theorem~\ref{thm:CV_pressures}.
%
%%%%%%%%%%%%%%%%%%%%%%%%%%%%%%%%%%%%%%%%%%%%%%%%%%%%%%%%%%%%%
\subsection{$\Gamma$-convergence}
\label{sec:gamma_CV_MBro_MREu}
\begin{proof}[Proof of Theorem~\ref{thm:GammaCv_multiphase}]
Again, the $\Gamma-\liminf$ easily follows from the standard lower semicontinuity of $\AA$ together with $\HH_\nu=\AA+\nu^2\FF\geq \AA$, and we only focus on the $\Gamma-\limsup$ inequality.
The argument essentially consists in superposing the proof of Theorem~\ref{thm:CV_Sch_OT} by linearity, \emph{i-e} integrating with respect to $\PP$.

More precisely: For $\nu>0$ we define the mapping
\begin{equation}
\label{eq:def_Phi_nu}
\Phi^\nu:\rho\mapsto \rho^\nu
\end{equation}
from $C([0,1];\P(\T^d))$ to itself, where the curve $\rho^\nu=(\rhont)_{t\in[0,1]}$ is defined in \eqref{eq:def_rho_nu_t}.
For any incompressible traffic plan $\PP$, we first claim that
\begin{equation*}
\PP^\nu\coloneqq {\Phi^\nu}\pf\PP
\end{equation*}
shares its marginals with $\PP$ and automatically inherits incompressibility from that of $\PP$.
Indeed, as already observed, $\rho^\nu$ leaves the endpoints unchanged $\rho^\nu_0=\rho_0$ and $\rho^\nu_1=\rho_1$, hence for all test functions $\varphi$ on $\P\times\P$
\begin{multline*}
 \int \varphi(\rho_0,\rho_1)\D \PP^\nu(\rho)
 =\int \varphi(\Phi^\nu(\rho)_0,\Phi^\nu(\rho)_1)\D \PP(\rho)\\
 =\int \varphi(\rho^\nu_0,\rho^\nu_1)\D \PP(\rho)
 =\int \varphi(\rho_0,\rho_1)\D \PP(\rho)
\end{multline*}
and therefore $\PP^\nu_{0,1}=\PP_{0,1}$.
In particular, the constraint $\PP^\nu_{0,1}=\Gamma$ is satisfied as soon as $\PP_{0,1}=\Gamma$.
For the incompressibility, since $\Leb$ is invariant for the heat flow $(\tau_s)\pf\Leb=\Leb\,\Rightarrow \, \Phi^\nu(\Leb)=\Leb$, and because $\Phi^\nu$ is linear, we have
\begin{equation*}
 \int \rho_t \D\PP^\nu(\rho)=
 \int \Phi^\nu(\rho)_t\D\PP(\rho) = \Phi^\nu\left(\int \rho \D\PP(\rho)\right)_t=\Phi^\nu(\Leb)=\Leb
\end{equation*}
for all $t$.

Taking now an admissible $\PP$ in $\MREu(\Gamma)$, we just showed that $\PP^\nu$ is admissible too, and we claim that it is a suitable recovery sequence.

First, we claim that $\PP^\nu\narrowcv \PP$: according to \cite[Lem.~5.2.1]{ambrosio2006gradient}, it suffices to show that $\Phi^{\nu}$ converges uniformly towards the identity on the compact sets of $C([0,1]; \P(\T^d))$.
In fact, this convergence is even uniform (and not compactly uniform), which will follow from the estimate
\begin{equation*}
\forall \rho \in \P(\T^d), \, \forall s\geq 0, \qquad \DMK(\rho, \rho * \tau_s) \leq C_d\sqrt{s}
\end{equation*}
for some dimensional constant $C_d>0$.
To check this, we use as an admissible coupling between $\rho$ and $\rho * \tau_s$ the joint law of the Brownian motion starting from $\rho$ between times $0$ and $s$: testing $\D\gamma_s(x,y)\coloneqq \tau_s(y-x)\D\rho(x)\otimes \D y$ as a competitor in \eqref{eq:def_DMK} yields
\begin{multline*}
 \DMK^2(\rho,\rho * \tau_s)\leq \frac 12\int \dist^2(x,y)\D\gamma_s(x,y)\\
 =\frac 12\int\underbrace{\left(\int \dist^2(x,y)\tau_s(x-y)\D y\right)}_{\leq C_d s \mbox{ by }\eqref{eq:estim_heat_flow}}\D \rho(x)
 \leq  C_d s.
\end{multline*}
As a consequence,
\begin{equation*}
\forall \rho \in C([0,1]; \P(\T^d)), \quad \sup_{t \in [0,1]} \DMK(\rho_t, \rho^{\nu}_t) \leq \sup_{t \in[0,1]} C_d\sqrt{\nu t(1-t)} = C_d\sqrt{\nu},
\end{equation*}
which proves the uniform convergence.

For the $\limsup$ inequality, we can always assume that $\AA(\PP)<\infty$ hence that $\PP$ only charges $AC^2$ curves, otherwise there is nothing to prove.
We can therefore appeal to Lemma~\ref{lem:fund} and \eqref{eq:fund_unweight} (for $\PP$-a.e. $\rho$) to estimate
\begin{align}
\notag \HH_\nu(\PP^\nu)
&=\int \H_\nu(\rho)\D\PP^\nu(\rho)
 =\int \H_\nu(\rho^\nu)\D\PP(\rho)\\
\notag & \leq \int \A(\rho)\D \PP(\rho) + \nu \int \left[
\frac{H(\rho_0)+H(\rho_1)}{2}
+C
\right]\D\PP(\rho)\\
\label{eq:estimate_HH_finite} &\leq \AA(\PP)+\nu\left[
\int\frac{H(\rho_0)+H(\rho_1)}{2}\D\Gamma(\rho_0,\rho_1)
 \, + C\right].
\end{align}
Taking the $\limsup$ gives the desired inequality and achieves the proof.
\qed
\end{proof}
\begin{Rem}
	As in Remark~\ref{rem:marginal_Sch_OT}, if \eqref{eq:M_initial_final_entropy_finite} does not hold the $\Gamma$-convergence \eqref{eq:Gamma_lim_MBro_MREu} cannot hold due to $\HH_\nu+ \boldsymbol\iota_{\Gamma} \equiv + \infty$.
	However, it is still possible to prove
	\begin{equation*}
	\Gamma-\lim\limits_{\nu\to 0}\, \HH_\nu 
	= \AA
	\end{equation*}
	by regularizing $\Gamma$ as $\Gamma^{\nu} \coloneqq  (\Psi_{\nu}, \Psi_{\nu})\pf \Gamma$, where
	\begin{equation*}
	\Psi_{\nu}: \rho \in \P(\T^d) \mapsto \rho * \tau_{\nu} \in \P(\T^d) 
	\end{equation*}
	and the admissible generalized flow must also be regularized correspondingly.
	In fact, we expect as in Remark~\ref{rem:marginal_Sch_OT} that
	\begin{equation*}
	\Gamma-\lim\limits_{\nu\to 0}\,\Big\{ \HH_\nu + \boldsymbol\iota_{\Gamma^{\nu}} \Big\}
	= \AA + \boldsymbol\iota_{\Gamma}
	\end{equation*}
	if and only if $\Gamma^{\nu} \narrowcv \Gamma$ and
	\begin{equation*}
	\lim_{\nu \to 0} \nu \int H(\rho_0) \D \Gamma^{\nu}(\rho_0, \rho_1) = 0 \quad \mbox{and} \quad	\lim_{\nu \to 0} \nu \int H(\rho_1) \D \Gamma^{\nu}(\rho_0, \rho_1) = 0.
	\end{equation*}
	To prove this statement, one needs a result corresponding to \cite[Thm.~1]{baradat2018continuous} in that setting but we did not pursue in this direction.
\end{Rem}
%%%%%%%%%%%%%%%%%%%%%%%%%%%%%%%%%%%%%%%%%%%%%%%%%%%%%%%%

%%%%%%%%%%%%%%%%%%%%%%%%%%%%%%%%%%%%%%%%%%%%%%%%%%%%%%%%%%%%%%%%%%%%%%%%%%%%%%%%%%%%%%%%%%%%%%%%%%%%%%%%%%%%%%%%%%%%%%%%
%%%%%%%%%%%%%%%%%%%%%%%%%%%%%%%%%%%%%%%%%%%%%%%%%%%%%%%%%%%%%%%%%%%%%%%%%%%%%%%%%%%%%%%%%%%%%%%%%%%%%%%%%%%%%%%%%%%%%%%%

\subsection{Convergence of the pressures}
\label{subsec:pressure}
Let us now turn to the proof of Theorem~\ref{thm:CV_pressures}.
We will need the following definition of the average density of a traffic plan.
\begin{Def}
	Let $\PP$ be a traffic plan. Its density $\rho^{\PP} \in C([0,1]; \P(\T^d))$ is defined at time $t \in [0,1]$ by:
	\begin{equation*}
	\rho^{\PP}_t \coloneqq  \int \rho_t \D \PP(\rho).
	\end{equation*}
\end{Def}
In other words, $(\rho^\PP_t)_{t\in[0,1]}$ is the curve obtained by averaging all the phases at time $t$ with respect to $\PP$.
The key ingredient below will rely on the proof of Theorem 8.4 in \cite{baradat2018existence}.
There, the first author introduced the following functional space:
\begin{Def}
	We define $\mathcal{E}_0$ the space of continuous functions $f:[0,1] \times \T^d \to \R$ satisfying:
	\begin{itemize}
		\item for all $x \in \T^d$, $f(0,x) = f(1,x) = 0$, 
				\item for all $t \in [0,1]$, 
		\begin{equation}
		\label{eq:zeromean}
		\int_{\T^d} f(t,x) \D x = 0,
		\end{equation}
		\item for all $t \in [0,1]$, $f(t,\bullet) \in W^{2, \infty}(\T^d)$ and
		\begin{equation*}
		\sup_{t \in [0,1]} \| \dd^2 f(t,\bullet) \|_{\infty} < + \infty,
		\end{equation*}
		\item 
		$f(\bullet, x) \in AC^2([0,1])$ for all $x \in \T^d$, and $\partial_t f$, which is well defined for almost all $t$ and all $x$, satisfies
		\begin{equation*}
		\int_0^1 \| \partial_t f(t,  \bullet) \|_{\infty}^2 \D t < + \infty.
		\end{equation*}
	\end{itemize}
We endow $\mathcal{E}_0$ with the norm
\begin{equation*}
\|f\|_{\mathcal E _0} \coloneqq  \sup_{t \in [0,1]} \| \dd^2 f(t,\bullet) \|_{\infty} + \left(\int_0^1 \| \partial_t f(t,\bullet) \|_{\infty}^2 \D t \right)^{1/2},
\end{equation*}
for which $(\mathcal{E}_0, {\|\bullet\|_{\mathcal E_0}})$ is a Banach space.
We write $\mathcal{E}'_0$ for its topological dual and $\|\bullet\|_{\mathcal E_0'}$ for the dual norm.
Note that $\mathcal E_0'\subset \mathcal D'((0,1)\times\T^d)$ is a subspace of distributions.
\end{Def}
Taking into account the dependence on the diffusivity parameter $\nu$, an easy extension of \cite[Thm.~8.4]{baradat2018existence} gives 
\begin{Thm}[Existence of the pressure fields]
\label{thm:existence_pressure}
~
\begin{enumerate}
 \item 
 \label{item:pressure_REUl}
	Let $\Gamma$ be bistochastic in average.
	There exists a unique $p \in \mathcal{E}_0'$ such that, for all solutions $\PP$ to $\MREu(\Gamma)$ and all traffic plans $\QQ$ satisfying \eqref{eq:Mjoint_law} with $\rho^{\QQ} - 1 \in \mathcal{E}_0$, there holds
	\begin{equation*}
	\AA(\QQ) \geq \AA(\PP) + \cg p, \rho^{\QQ}-1 \cd_{\mathcal{E}_0', \mathcal{E}_0}.
	\end{equation*}
	The distribution $p$ is called the pressure field associated with $\MREu(\Gamma)$.
\item
	Let $\Gamma$ be bistochastic in average and satisfy \eqref{eq:M_initial_final_entropy_finite}, and let $\nu>0$.
	There exists a unique $p^{\nu} \in \mathcal{E}_0'$ such that, if $\PP^{\nu}$ is the unique solution to $\MBro_{\nu}(\Gamma)$ and $\QQ$ is any traffic plan satisfying \eqref{eq:Mjoint_law} with $\rho^{\QQ} - 1 \in \mathcal{E}_0$, then
	\begin{equation}
	\label{eq:pressure_nu_ML}
	\HH_{\nu}(\QQ) \geq \HH_{\nu}(\PP^{\nu}) + \cg p^{\nu}, \rho^{\QQ}-1 \cd_{\mathcal{E}_0', \mathcal{E}_0}.
	\end{equation}
	Moreover, there exists a dimensional constant $C$ such that
	\begin{equation}
	\label{eq:estimate_pressure}
	\left\|p^{\nu}\right\|_{\mathcal E_0'} \leq C(1 + \nu^2).
	\end{equation}
	The distribution $p^{\nu}$ is called the pressure field associated with $\MBro_{\nu}(\Gamma)$.
\end{enumerate}
\end{Thm}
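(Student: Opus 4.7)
The plan is to follow the convex-analytic strategy of \cite[Thm.~8.4]{baradat2018existence} and obtain each pressure as a subgradient at the origin of a convex value function on $\mathcal{E}_{0}$, while carefully tracking the $\nu$-dependence to secure \eqref{eq:estimate_pressure}. Define
\begin{equation*}
V_{\nu}(\varphi) \coloneqq \inf\big\{\HH_{\nu}(\QQ) : \QQ_{0,1}=\Gamma,\ \rho^{\QQ}_{t} = (1+\varphi(t,\cdot))\Leb \ \forall t\big\},
\end{equation*}
and define $V_{0}$ analogously with $\AA$ in place of $\HH_{\nu}$, extended by $+\infty$ on empty constraint sets. Linearity of $\QQ\mapsto\rho^{\QQ}$ and convexity of the actions make $V_{0},V_{\nu}$ convex on $\mathcal{E}_{0}$; the endpoint and mean-zero conditions encoded in $\mathcal{E}_{0}$ are exactly what is needed for compatibility with the bistochasticity in average of $\Gamma$, so $V_{\nu}(0)=\HH_{\nu}(\PP^{\nu})<+\infty$ under \eqref{eq:M_initial_final_entropy_finite} and $V_{0}(0)=\AA(\PP)<+\infty$.

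The core step is the quantitative bound
\begin{equation*}
V_{\nu}(\varphi) \leq V_{\nu}(0) + C(1+\nu^{2})\,\|\varphi\|_{\mathcal{E}_{0}}
\end{equation*}
for $\|\varphi\|_{\mathcal{E}_{0}}$ small enough that $1+\varphi\geq \tfrac12$. To this end, let $\Phi_{t}^{\varphi}:\T^{d}\to\T^{d}$ be the optimal transport map from $\Leb$ to $(1+\varphi(t,\cdot))\Leb$: the $C^{2}$-in-space and $H^{1}$-in-time control built into $\mathcal{E}_{0}$ yields that $\Phi^{\varphi}$ is a smooth family of diffeomorphisms with second-order spatial derivatives and time derivative bounded by $C\|\varphi\|_{\mathcal{E}_{0}}$, and the vanishing of $\varphi$ at $t=0,1$ forces $\Phi_{0}^{\varphi}=\Phi_{1}^{\varphi}=\Id$. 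Given the (unique) solution $\PP^{\nu}$ to $\MBro_{\nu}(\Gamma)$ (resp. any solution $\PP$ to $\MREu(\Gamma)$), define the competitor $\QQ^{\varphi}$ as the pushforward of $\PP^{\nu}$ (resp. $\PP$) under $(\rho_{t})\mapsto((\Phi_{t}^{\varphi})\pf\rho_{t})$; by construction it has endpoint marginals $\Gamma$ and average density $1+\varphi$.

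Two estimates then combine. The kinetic action is controlled, as in \cite[\S 8]{baradat2018existence}, by $\AA(\QQ^{\varphi})\leq \AA(\PP^{\nu}) + C\|\varphi\|_{\mathcal{E}_{0}}(1+\AA(\PP^{\nu}))$, obtained by expressing the velocity of $(\Phi_{t}^{\varphi})\pf\rho_{t}$ in terms of $\partial_{t}\Phi_{t}^{\varphi}$ and the transported velocity and absorbing the resulting cross-terms into the $\mathcal{E}_{0}$ norm of $\varphi$. The genuinely new ingredient is the Fisher information estimate: a change-of-variables computation for $\F$ under a smooth diffeomorphism $\Phi$ of $\T^{d}$ yields
\begin{equation*}
\F(\Phi\pf \rho) \leq \F(\rho) + C\,\|\Phi-\Id\|_{C^{2}}\,(1+\F(\rho)),
\end{equation*}
with $C$ dimensional, and integrating in time and against $\PP^{\nu}$, using $\FF(\PP^{\nu})<+\infty$, gives $\nu^{2}\FF(\QQ^{\varphi})\leq \nu^{2}\FF(\PP^{\nu}) + C\nu^{2}\|\varphi\|_{\mathcal{E}_{0}}$. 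Summing produces the desired local Lipschitz bound with constant $C(1+\nu^{2})$.

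Convexity of $V_{\nu}$ together with this local Lipschitz estimate at the origin imply, by Hahn--Banach, that $\partial V_{\nu}(0)$ is nonempty and contained in the ball of radius $C(1+\nu^{2})$ in $\mathcal{E}_{0}'$; any $p^{\nu}\in \partial V_{\nu}(0)$ then satisfies \eqref{eq:pressure_nu_ML} and \eqref{eq:estimate_pressure}, and the analogous argument with $V_{0}$ gives $p$ in point~\ref{item:pressure_REUl} (independent of the choice of optimal $\PP$ since $V_{0}$ itself is). Uniqueness follows as in \cite[Thm.~8.4]{baradat2018existence} from the density of the admissible perturbations $\{\rho^{\QQ}-1\}$ in $\mathcal{E}_{0}$. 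The main obstacle is the Fisher-information change-of-variables estimate above: one must verify that pushing a density through a diffeomorphism perturbs its Fisher information by a controlled error proportional to $\|\Phi-\Id\|_{C^{2}}$ uniformly in the initial density, which is precisely what produces the $\nu^{2}$ contribution in the final bound.
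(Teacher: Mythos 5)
Your overall strategy matches the paper's: introduce a convex value function on $\mathcal{E}_0$, prove a quantitative local bound near $\varphi=0$, and extract the pressure as a subgradient at the origin whose dual norm is controlled by that bound. The abstract convex-analytic scaffolding is correct, and the observation that the Fisher-information contribution must be tracked separately (yielding the $\nu^{2}$ factor) is exactly the right thing to add on top of the classical $\MREu$ case.

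The genuine gap lies in your choice of the diffeomorphism $\Phi^{\varphi}_{t}$. You take the optimal transport map from $\Leb$ to $(1+\varphi(t,\cdot))\Leb$ and assert that the $C^{2}$-in-space and $H^{1}$-in-time control built into $\mathcal{E}_{0}$ ``yields'' $C^{2}$-in-space and $H^{1}$-in-time bounds on $\Phi^{\varphi}$ proportional to $\|\varphi\|_{\mathcal{E}_{0}}$. Neither assertion is readily available. The quantitative $C^{2}$ bound on the OT map in terms of the $C^{2}$ norm of the target density requires Caffarelli/Cordero-Erausquin regularity for the Monge--Amp\`ere equation on the torus, and turning that qualitative theory into a Lipschitz estimate $\|\Phi^{\varphi}_{t}-\Id\|_{C^{2}}\leq C\|\varphi(t,\cdot)\|_{C^{2}}$ uniform in $t$ is a nontrivial theorem, not a consequence of elliptic interior estimates alone. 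Even more delicate is the time regularity: the map $\varphi(t,\cdot)\mapsto \Phi^{\varphi}_{t}$ is implicitly defined through a fully nonlinear equation, and showing that it is $H^{1}$ in $t$ (with the $H^{1}$ norm controlled by $\|\partial_{t}\varphi\|_{L^{2}_{t}L^{\infty}_{x}}$) requires a sensitivity analysis of the Monge--Amp\`ere solution with respect to perturbations of the right-hand side in a topology strong enough to propagate to second spatial derivatives. None of this is off-the-shelf, and it is precisely to avoid these estimates that the paper, following \cite{baradat2018existence}, replaces the OT map by the Dacorogna--Moser construction: the Dacorogna--Moser flow is built by an explicit ODE/elliptic procedure for which the required $C^{2}$-in-space and $H^{1}$-in-time bounds are elementary. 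Your Fisher-information change-of-variables estimate, once a diffeomorphism with the desired regularity is in hand, is sound; the difficulty is producing that diffeomorphism, and as written the proof silently assumes quantitative Monge--Amp\`ere regularity that needs a proof or a precise reference.
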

\begin{proof}
	In the absence of viscosity, \emph{i-e} point~\ref{item:pressure_REUl} in our statement, the result is due to Brenier in \cite{brenier1993dual} but also follows from our argument below.
	In the viscous setting only our estimate \eqref{eq:estimate_pressure} is new compared to \cite{baradat2018existence} hence we only sketch the proof for $\nu>0$ and refer to \cite{baradat2018existence} for more details.

	Given a scalar function $\varphi = (\varphi(t,x))$, we say that a traffic plan $\QQ$ is admissible for $\MBro_{\nu}^{\varphi}(\Gamma)$ if \eqref{eq:Mjoint_law}\eqref{eq:Mfinite_action} hold, the previous incompressibility \eqref{eq:incomressibility_MREu} is replaced by
	\begin{equation*}
	\rho^{\QQ}_t = (1 + \varphi(t, \bullet)) \Leb,
	\end{equation*}
	and the Fisher information \eqref{eq:Mfinite_fischer} is finite.
	Of course, this is possible only if $1 + \varphi \geq 0$ and $\varphi$ satisfies \eqref{eq:zeromean}, but both properties hold if $\varphi$ is chosen sufficiently small in $\mathcal{E}_0$.
	
	Note that $\MBro_\nu$ corresponds to $\varphi\equiv 0$: essentially, as explained in the introduction, the idea of proof below will consist in relaxing the incompressibility constraint and performing a small perturbation of $\MBro_\nu=\MBro_\nu^{|\varphi=0}$ around $\varphi=0$.
	
	With the convention that $\inf\limits_\emptyset \{\dots\}= + \infty$ and following \cite{baradat2018existence}, the map
	\begin{equation*}
	\phi^{\nu}: 
	\quad\varphi \in \mathcal{E}_0 \quad\longmapsto \quad\inf_{
	\substack{
	\QQ \mbox{ \small admissible}\\
	\mbox{\small for } \MBro_{\nu}^{\varphi}(\Gamma)
	}
	}  \HH_{\nu}(\QQ)
	\end{equation*}
	is convex and lower semicontinuous (for the topology induced by the norm $\| \bullet\|_{\mathcal E_0}$).
	Hence, in order to get the existence of a subgradient $p^{\nu}\in \partial_{\varphi=0} \phi^\nu$ satisfying \eqref{eq:pressure_nu_ML} and \eqref{eq:estimate_pressure}, it suffices to show that there exists a constant $C=C_d>0$ such that for all $\varphi \in \mathcal{E}_0$ close to $0$ -- say with $\|\varphi\|_{\mathcal E_0} \leq 1/2$ -- there holds
	\[
	\phi^{\nu}(\varphi) \leq C(1 + \nu^2).
	\]
	But for such a $\varphi$, using a famous result by Dacorogna and Moser (see \cite[Thm. 8.1]{baradat2018existence} or \cite{dacorogna1990partial} for the original version) and arguing as in \cite{baradat2018existence}, one can build from the minimizer $\PP^{\nu}$ in $\MBro_\nu$ a traffic plan $\QQ^{\nu}$ that is admissible for $\MBro_{\nu}^{\varphi}(\Gamma)$, with moreover
	\begin{gather*}
	\AA(\QQ^{\nu}) \leq C(1 + \AA(\PP^{\nu})),\\
	\FF(\QQ^{\nu}) \leq C(1 + \FF(\PP^{\nu})),
	\end{gather*}
	and $C$ depending only on the dimension.
	In particular, we have
	\begin{equation*}
	\phi^{\nu}(\varphi) \leq \HH_{\nu}(\QQ^{\nu}) = \AA(\QQ^{\nu}) + \nu^2 \FF(\QQ^{\nu}) \leq C(1 + \nu^2 + \HH_{\nu}(\PP^{\nu})).
	\end{equation*}
	We conclude using the fact that $\PP^\nu$ is a minimizer, hence
	\begin{equation}
	\label{eq:estimate_Hnu_Pnu_leq_C}
	\HH_{\nu}(\PP^{\nu}) \leq \HH_\nu(\PP^1)=\AA(\PP^1) + \nu^2 \FF(\PP^1) \leq C(1 + \nu^2)
	\end{equation}
	(Here $\PP^1$ is the minimizer of $\MBro_1$ with $\nu=1$, but any other fixed admissible traffic plan would have worked as well.)
	We refer to \cite[Thm.~1.2]{brenier1993dual} and \cite[Thm.~8.4]{baradat2018existence} for the uniqueness part of the statement.
	\qed
\end{proof}
We are now ready to prove the convergence of the pressures:
\begin{proof}[Proof of Theorem~\ref{thm:CV_pressures}]
	Let $\PP^{\nu}$ be the unique minimizer for $\MBro_{\nu}(\Gamma)$.
	From \eqref{eq:estimate_Hnu_Pnu_leq_C} we have
	$$
	\AA(\PP^\nu)\leq \AA(\PP^\nu)+\nu^2\FF(\PP^\nu)= \HH_{\nu}(\PP^\nu)\leq  C(1+\nu^2)\leq C
	$$
	as $\nu\to 0$,	and we recall that $\AA$ is proper for the narrow topology. 
	Consequently, the sequence $(\PP^{\nu})$ is tight and has at least one cluster point $\PP^\nu\narrowcv \PP$ (up to extraction of a discrete subsequence if needed).
	By Theorem~\ref{thm:GammaCv_multiphase}, and recalling that $\Gamma$-convergence implies convergence of minimizers to minimizers, it is clear that any such cluster point $\PP$ is a solution to $\REu(\Gamma)$.
	
	First of all, by Theorem~\ref{thm:existence_pressure}, the pressures $(p^{\nu})_{\nu>0}$ are bounded in $\mathcal{E}_0'$ uniformly in $\nu$.
	But, as the functional space $\mathcal{G}$ from Definition~\ref{defi:functional_space_G} is continuously embedded in $\mathcal{E}_0$, $(p^{\nu})$ is also bounded in $\mathcal{G}'$.
	By separability of $\mathcal{G}$ and the Banach-Alaoglu theorem there is $p^* \in \mathcal{G}'$ such that, up to extraction of a further subsequence, $p^{\nu}{\narrowcv} p^*$ for the weak-$*$ topology of $\mathcal G'$.
	Thus it suffices to show that $p^* = p$, and convergence of the whole sequence towards $p$ will follow by standard  arguments.
	To do so, and by uniqueness in Theorem~\ref{thm:existence_pressure}, it suffices to show that, for all $\varphi \in \mathcal{E}_0$ and all traffic plans $\QQ$ satisfying \eqref{eq:Mjoint_law}\eqref{eq:Mfinite_action} as well as $\rho^{\QQ} = 1 + \varphi$, there holds
	\begin{equation}
	\label{eq:p*_pressure}
	\AA(\QQ) \geq \AA(\PP) + \cg p^*, \varphi \cd_{\mathcal E_0',\mathcal E_0}.
	\end{equation}
	To test this inequality, take $\varphi \in \mathcal{E}_0$ and $\QQ$ such that $\rho^{\QQ} = 1 + \varphi$, and define
	\begin{equation*}
	\QQ^{\nu} \coloneqq  \Phi^{\nu}\pf \QQ
	\end{equation*}
	as in the proof of Theorem~\ref{thm:GammaCv_multiphase}.
	We recall that $\Phi^\nu$ is defined in \eqref{eq:def_Phi_nu} \emph{via} \eqref{eq:def_rho_nu_t}.
	From the subdifferential characterization \eqref{eq:pressure_nu_ML} of $p^\nu$, we have
	\begin{equation}
	\label{eq:pnu_ML}
	\HH_{\nu}(\QQ^{\nu}) \geq \HH_{\nu}(\PP^{\nu}) + \cg p^{\nu}, \rho^{\QQ^{\nu}} - 1 \cd_{\mathcal E_0',\mathcal E_0}.
	\end{equation}
	Repeating the exact same argument from the proof of Theorem~\ref{thm:GammaCv_multiphase} (construction of the recovery sequences), we have moreover
	\begin{equation}
	\label{eq:Qnu_recovery}
	\AA(\QQ) \geq \limsup_{\nu \to 0} \HH_{\nu}(\QQ^{\nu}) .
	\end{equation}
	But by linearity of $\Phi^\nu$ with $\Phi^\nu(\Leb)=\Leb$ (or, abusing notations, $\Phi^\nu(1)=1$) it is easy to check that
	\begin{equation*}
	\varphi^{\nu} \coloneqq  \rho^{\QQ^{\nu}} - 1 =\Phi^{\nu}(\varphi),
	\end{equation*}
	which by definition of $\Phi^\nu$ simply means that $\varphi^\nu(t,\bullet)$ is the solution at time $s=\nu t(1-t)$ of the heat flow started from $\varphi(t,\bullet)$. 
	Therefore, by standard properties of the heat flow, $\varphi^{\nu} \to \varphi$ in any reasonable topology, and in particular strongly in $\mathcal E_0$.
	Together with $p^{\nu} \narrowcv p$, this allows to take the limit in the product
	\begin{equation}
	\label{eq:cv_weak_strong}
	\cg p^{\nu}, \varphi^{\nu} \cd_{\mathcal E_0',\mathcal E_0}\xrightarrow[\nu \to 0]{} \cg p^*, \varphi \cd_{\mathcal E_0',\mathcal E_0}.
	\end{equation}
	Moreover, by the $\Gamma-\liminf$ property in Theorem~\ref{thm:GammaCv_multiphase} with $\PP^\nu\narrowcv \PP$, 
	\begin{equation}
	\label{eq:Gamma_liminf_minimizers}
	\liminf_{\nu \to 0} \HH_{\nu}(\PP^{\nu}) \geq \AA(\PP).
	\end{equation}
	We finally retrieve \eqref{eq:p*_pressure} by passing to the limit in \eqref{eq:pnu_ML} using \eqref{eq:Qnu_recovery}, \eqref{eq:cv_weak_strong}, and \eqref{eq:Gamma_liminf_minimizers}.
	\qed
\end{proof}

%%%%%%%%%%%%%%%%%%%%%%%%%%%%%%%%%%%%%%%%%%%%%%%%%%%%%%%%%%%%%%%%%%%%%%%%%%%%%%%%%%%%%%%%%%%%%%%%%%%%%%%%%%%%%%%%%%%%%%%%
%%%%%%%%%%%%%%%%%%%%%%%%%%%%%%%%%%%%%%%%%%%%%%%%%%%%%%%%%%%%%%%%%%%%%%%%%%%%%%%%%%%%%%%%%%%%%%%%%%%%%%%%%%%%%%%%%%%%%%%%

\section{Time-convexity of the entropy}
\label{section:convexity}
Using our regularization lemma~\ref{lem:fund}, we can now give the proofs of Proposition~\ref{prop:convexity_OT_Sch} and Proposition~\ref{prop:convexity_MREu_MBro}.
We begin with the single-phase setting:
\begin{proof}[Proof of Proposition~\ref{prop:convexity_OT_Sch}]
 Let us start with $\OT$.
 For small $\nu>0$ consider the curve $\rho^\nu$ defined by \eqref{eq:def_rho_nu_t}.
 As already discussed the endpoints remain invariant, $\rho^\nu_0=\rho_0,\rho^\nu_1=\rho_1$: the curve $\rho^\nu$ is therefore an admissible competitor in the $\OT$ problem, and since $\rho$ is a minimizer we have $\A(\rho)\leq \A(\rho^\nu)$.
Simply discarding the term $\frac{\nu^2}8\int (\dots)\geq 0$ in \eqref{eq:fund_weight}, we have
 \begin{equation*}
\A(\rho)\leq \A(\rho^\nu)\leq \A(\rho) +  \nu
\left[   \frac{H(\rho_0)+H(\rho_1)}{2}-\int_0^1H(\rhont)\D t\right]
 \end{equation*}
hence
\begin{equation*}
 \int_0^1H(\rhont)\D t  \leq  \frac{H(\rho_0)+H(\rho_1)}{2}
\end{equation*}
for all $\nu>0$.
By standard properties of the heat flow we have moreover $\rhont=\tau_{\nu t(1-t)}*\rho_t\narrowcv \rho_t$ for all $t\in[0,1]$ as $\nu\to 0$.
Since the entropy is lower semicontinuous with respect to narrow convergence, we get by Fatou's lemma
\begin{equation*}
 \int_0^1H(\rho_t)\D t 
 \leq 
 \int_0^1\liminf\limits_{\nu\to 0} H(\rhont)\D t
 \leq  \liminf\limits_{\nu\to 0}\int H(\rhont)\D t
 \leq \frac{H(\rho_0)+H(\rho_1)}{2}.
\end{equation*}
In particular $H(\rho_t)<\infty$ for a.e. $t$, and in fact for all $t$ by narrow continuity of $\rho\in C([0,1];\P(\T^d))$ and lower semicontinuity of $H$.

This was carried out in times $t\in [0,1]$, but $\rho$ is of course a minimizer for the optimal transport problem $\OT(\rho_{t_0},\rho_{t_1})$ for all intermediate times $t_0\leq t_1$.
Since we just proved that $\frac 12[H(\rho_{t_0})+H(\rho_{t_1})]<+\infty$, we can repeat the exact same argument to conclude that
$$
\int_{t_0}^{t_1}H(\rho_t)\D t \leq 
\frac{H(\rho_{t_0})+H(\rho_{t_1})}{2},
\qquad \forall\, t_0\leq t_1.
$$
Since $\rho$ is narrowly continuous and $H$ is l.s.c. for the narrow convergence, and because $t_0,t_1$ can now vary arbitrarily, this implies the desired convexity.

The proof for $\Sch_\alpha(\rho_0,\rho_1)$ is identical, simply using \eqref{eq:fund_weight_Halpha} instead of \eqref{eq:fund_weight}.
\qed
\end{proof}

For the multiphase setting the proof essentially consists in superposing the previous argument by linearity:
\begin{proof}[Proof of Proposition~\ref{prop:convexity_MREu_MBro}]
We consider first the $\MREu$ problem.
As in the proof of Theorem~\ref{thm:GammaCv_multiphase} earlier in section~\ref{sec:gamma_CV_MBro_MREu}, the argument essentially consists in superposing (\emph{i.e} integrating with respect to $\PP$) the corresponding statement for a single phase, here Proposition~\ref{prop:convexity_OT_Sch}.

More precisely: let $\PP$ be a solution to $\MREu$, and consider as before the map ${\Phi^\nu:\rho \mapsto \rho^\nu}$ from $C([0,1];\P(\T^d))$ to itself defined by \eqref{eq:def_rho_nu_t}.
We already checked in the proof of Theorem~\ref{thm:GammaCv_multiphase} that the traffic plan
\begin{equation*}
\PP^\nu\coloneqq {\Phi^\nu}\pf\PP
\end{equation*}
is incompressible and shares its marginals $\Gamma$ with $\PP$.
Since $\PP$ is a minimizer in $\MREu$ there holds
\begin{equation*}
\AA(\PP)\leq \AA(\PP^\nu) =\int \A(\rho)\D\PP^\nu(\rho) =\int \A\circ\Phi^\nu(\rho)\D\PP(\rho) =\int \A(\rho^\nu)\D\PP(\rho).
\end{equation*}
Discarding $\frac{\nu^2}{8}\int(\dots)\geq 0$ in \eqref{eq:fund_weight} we can estimate as before
\begin{equation*}
 \A(\rho^\nu)\leq \A(\rho) +\nu\left(\frac{H(\rho_0)+H(\rho_1)}{2}-\int_0^1H(\rho^\nu_t)\D t\right)
\end{equation*}
for $\PP$-a.e. $\rho$, and integrating with respect to $\PP$ thus gives
\begin{align*}
 \AA(\PP)
 &\leq \int \left\{
 \A(\rho) +\nu\left(\frac{H(\rho_0)+H(\rho_1)}{2}-\int_0^1H(\rho^\nu_t)\D t\right)
 \right\}\D\PP(\rho)\\
  &= \AA(\PP) +\nu \left(\int\frac{H(\rho_0)+H(\rho_1)}{2} \D\PP(\rho)-\int \hspace{-5pt} \int_0^1 H(\rho^\nu_t)\D t \D\PP(\rho)\right).
\end{align*}
Whence
\begin{equation*}
 \int_0^1 \hspace{-5pt} \int H(\rho^\nu_t)\D\PP(\rho) \D t 
 \leq
 \int\frac{H(\rho_0)+H(\rho_1)}{2} \D\PP(\rho)
\end{equation*}
for all $\nu>0$.
The right-hand side is finite since the marginal entropies $\int \{H(\rho_0)+H(\rho_1)\}\D\PP(\rho)=\int \{H(\rho_0)+H(\rho_1)\}\D\Gamma(\rho_0,\rho_1)<+\infty$.
Taking first $\nu\to 0$ and repeating next the argument in arbitrary subintervals $[t_0,t_1]\subset[0,1]$, the rest of the proof is identical to the previous proof of Proposition~\ref{prop:convexity_OT_Sch} and we omit the details.

For $\MBro_\alpha$ we simply use \eqref{eq:fund_weight_Halpha} instead of \eqref{eq:fund_weight} as before, and the proof is complete.
\qed
\end{proof}
\begin{Rem}
In \cite{lavenant2017time} H. Lavenant proves (a slightly weaker version of) the same convexity by discretizing $\MREu$ in time, which gives a minimization problem over a large number $K$ of intermediate marginals at times $0=t_0,\dots, t_K=1$.
Performing an infinitesimal perturbation of the $k$-th optimal marginal using the heat flow as well as the \emph{flow interchange} technique from \cite{matthes2009family}, one retrieves then some convexity in the discrete time variable $k$ and finally passes to the limit $K\to\infty$ to conclude.
The technical details differ compared to our proof above, but the main idea is somehow similar: the heat flow gives admissible competitors in the variational problem, and tends to simultaneously diminish and convexify the entropy.
Hence if the entropy were not convex, one could construct better competitors by running the heat flow for short times while improving convexity.
However, our regularization $\rhont=\tau_{\nu t(1-t)}*\rho_t$ is more global,
roughly speaking because we simultaneously perturb the whole continuum of time marginals in a unified fashion and thus we avoid any delicate time-slicing.
More importantly, our approach has a clear counterpart at the level of the underlying stochastic processes, $\nu t(1-t)$ being of course the intrinsic scale of the Brownian bridges $B^{\nu,x,y}$ involved in section~\ref{section:GammaCv_process}.
\end{Rem}
%%%%%%%%%%%%%%%%%%%%%%%%%%%%%%%%%%%%%%%%%%%%%%%%%%%%%%%%%%%%%%%%%%%%%%%%%%%%%%%%%%%%%%%%%%%%%%%%%%%%%%%%%%%%%%%%%%%%%%%%
%%%%%%%%%%%%%%%%%%%%%%%%%%%%%%%%%%%%%%%%%%%%%%%%%%%%%%%%%%%%%%%%%%%%%%%%%%%%%%%%%%%%%%%%%%%%%%%%%%%%%%%%%%%%%%%%%%%%%%%%
\subsection*{Acknowledgements}
\noindent
This work is part of A.B.'s PhD thesis supervised by Y. Brenier and D. Han-Kwan, and he would like to thank both of them for their support.
It was initiated during a visit in GFM University of Lisbon, and A.B. is grateful to A.B. Cruzeiro and J.C. Zambrini for making this visit possible.
L.M. also wishes to thank A.B. Cruzeiro and J.C. Zambrini for useful discussions and their kind support.
Both authors want to express their gratitude to C. L\'eonard for his continued enthusiasm when discussing the Schr\"odinger problem.
% \end{acknowledgements}

%%%%%%%%%%%%%%%%%%%%%%%%%%%%%%%%%%%%%%%%%%%%%%%%%%%%%%%%%%%%%%%%%%%%%%%%%%%%%%%%%%%%%%%%%%%%%%%%%%%%%%%%%%%%%%%%%%%%%%%%
%%%%%%%%%%%%%%%%%%%%%%%%%%%%%%%%%%%%%%%%%%%%%%%%%%%%%%%%%%%%%%%%%%%%%%%%%%%%%%%%%%%%%%%%%%%%%%%%%%%%%%%%%%%%%%%%%%%%%%%%
\begin{appendices}

%
%%%%%%%%%%%%%%%%%%%%%%%%%%%%%%%%%%%%%%%%%%%%%%%%%%%%%%%%%%%%%%%%%%%%%%%%%%%%%%%%%%%%%%%%%%%%%%%%%%%%%%%%%%%%%%%%%%%%%%%%
\section{Existence and uniqueness for $\MBro$}
	\label{sec:exist_unique_MBro}
Here we establish
\begin{Thm}
\label{thm:exist_unique_MBro}
	Take $\Gamma$ bistochastic in average satisfying the entropy condition \eqref{eq:M_initial_final_entropy_finite}, and let $\nu >0$.
	Then $\MBro_{\nu}(\Gamma)$ admits a unique solution.
\end{Thm}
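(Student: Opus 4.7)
The plan is to apply the direct method of the calculus of variations for existence, then obtain uniqueness by a Jensen-type argument exploiting the strict convexity of $\H_\nu=\A+\nu^2\F$ in the variable $\rho$. Throughout I will rely on the structural facts already recorded in Section~\ref{sec:presentation}: $\A$ is convex, proper, and lower semicontinuous for the uniform topology on $C([0,1];\P(\T^d))$; $\F$ is strictly convex and lower semicontinuous; hence $\H_\nu$ is strictly convex and lower semicontinuous as soon as $\nu>0$.

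For existence, I will first exhibit one explicit admissible plan with finite $\HH_\nu$, so that the infimum is not $+\infty$. A natural candidate is the heat-regularized linear density interpolation
\begin{equation*}
\PP_{\mathrm{ref}}:=\int\delta_{\rho^{\rho_0,\rho_1}}\,\D\Gamma(\rho_0,\rho_1),
\qquad\rho^{\rho_0,\rho_1}_t:=\tau_{\nu t(1-t)}*\big[(1-t)\rho_0+t\rho_1\big].
\end{equation*}
The marginal condition $\PP_{\mathrm{ref},0,1}=\Gamma$ is immediate from $\tau_0=\delta_0$, and incompressibility follows from bistochasticity in average together with $\tau_s*\Leb=\Leb$. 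Finiteness of $\AA(\PP_{\mathrm{ref}})$ and $\FF(\PP_{\mathrm{ref}})$ is obtained from the average entropy assumption \eqref{eq:M_initial_final_entropy_finite} via the de Bruijn identity $\int_0^S F(\tau_s*\mu)\,\D s=\tfrac12[H(\mu)-H(\tau_S*\mu)]\leq\tfrac12 H(\mu)$, combined with a change of variable $s=\nu t(1-t)$ that absorbs the apparent endpoint singularity; the kinetic term is handled by the continuity-equation representation of $\rho^{\rho_0,\rho_1}$ together with analogous heat-kernel estimates. Given nonemptiness, take a minimizing sequence $(\PP^n)$. The uniform bound $\int\A(\rho)\,\D\PP^n(\rho)\leq M$, coupled with the H\"older estimate $\DMK(\rho_s,\rho_t)^2\leq 2\A(\rho)|t-s|$ and Markov's inequality, yields equicontinuity outside $\PP^n$-sets of arbitrarily small measure; since $\P(\T^d)$ is itself narrowly compact (as $\T^d$ is compact), Arzel\`a-Ascoli produces a tight family in $\P(C([0,1];\P(\T^d)))$. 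Prokhorov extracts a cluster point $\PP^{\ast}$; the marginal and incompressibility constraints pass to the limit by testing continuous functions on $\T^d$ and $\T^d\times\T^d$, and the already noted narrow lower semicontinuity of $\HH_\nu$ completes the proof of existence.

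For uniqueness, let $\PP$ be any minimizer and disintegrate along its endpoints as $\PP=\int\PP^{\rho_0,\rho_1}\,\D\Gamma(\rho_0,\rho_1)$. Form the phase-wise barycenter $\bar\rho^{\rho_0,\rho_1}_t:=\int \rho_t\,\D\PP^{\rho_0,\rho_1}(\rho)$, which still has endpoints $(\rho_0,\rho_1)$ since $\PP^{\rho_0,\rho_1}$ is supported on curves with those endpoints, and set $\tilde\PP:=\int\delta_{\bar\rho^{\rho_0,\rho_1}}\,\D\Gamma$. Then $\tilde\PP$ is admissible: $\tilde\PP_{0,1}=\Gamma$ by construction, and $\int\bar\rho^{\rho_0,\rho_1}_t\,\D\Gamma=\int \rho_t\,\D\PP=\Leb$ gives incompressibility. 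Jensen's inequality applied to the convex functional $\H_\nu$ phase-wise and integrated against $\Gamma$ yields
\begin{equation*}
\HH_\nu(\tilde\PP)=\int\H_\nu(\bar\rho^{\rho_0,\rho_1})\,\D\Gamma\leq\int\!\!\int\H_\nu(\rho)\,\D\PP^{\rho_0,\rho_1}(\rho)\,\D\Gamma=\HH_\nu(\PP),
\end{equation*}
with strict inequality on any $\Gamma$-positive set of fibers where $\PP^{\rho_0,\rho_1}$ fails to be a Dirac, by strict convexity of $\H_\nu$ for $\nu>0$. Optimality of $\PP$ thus forces $\PP^{\rho_0,\rho_1}=\delta_{\rho^{\ast,\rho_0,\rho_1}}$ for $\Gamma$-a.e.\ endpoints, so every minimizer is concentrated on a measurable selection of single curves indexed by endpoints. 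If $\PP^1$ and $\PP^2$ are two minimizers, then $\tfrac12(\PP^1+\PP^2)$ is yet another minimizer whose fiber disintegration $\tfrac12(\delta_{\rho^{1,\rho_0,\rho_1}}+\delta_{\rho^{2,\rho_0,\rho_1}})$ must again be a Dirac for $\Gamma$-a.e.\ endpoints by the same argument, which is possible only if $\rho^{1,\rho_0,\rho_1}=\rho^{2,\rho_0,\rho_1}$ almost surely; hence $\PP^1=\PP^2$.

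The main obstacle I expect is the nonemptiness step, namely verifying that $\HH_\nu(\PP_{\mathrm{ref}})<+\infty$: the only structural control on $\Gamma$ is the average entropy assumption \eqref{eq:M_initial_final_entropy_finite}, and simultaneously estimating the kinetic and Fisher parts of the heat-regularized interpolation under this sole hypothesis requires the sharp de Bruijn time integration rather than the naive Li-Yau pointwise bound, whose time integral would diverge logarithmically near $t=0,1$. A secondary subtlety lies in ensuring that each barycenter $\bar\rho^{\rho_0,\rho_1}$ carries the regularity on which strict convexity of $\H_\nu$ is truly strict; fortunately the convexity bounds $\A(\bar\rho)\leq\int\A(\rho)\,\D\PP^{\rho_0,\rho_1}$ and $\F(\bar\rho)\leq\int\F(\rho)\,\D\PP^{\rho_0,\rho_1}$ guarantee both finiteness and membership in the relevant domain, so that the Jensen comparison above is unambiguous.
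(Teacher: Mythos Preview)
Your uniqueness argument is essentially identical to the paper's: disintegrate a minimizer $\PP$ over its endpoints, replace each fiber by the Dirac at its barycenter $m[\rho_0,\rho_1]=\int\rho\,\D\PP^{\rho_0,\rho_1}(\rho)$, use strict convexity of $\H_\nu$ via Jensen to force $\PP^{\rho_0,\rho_1}=\delta_{m[\rho_0,\rho_1]}$, and then observe that $\tfrac12(\PP^1+\PP^2)$ being supported on a graph forces $\PP^1=\PP^2$. This matches the paper verbatim.

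For existence, your strategy (direct method, with tightness via the uniform kinetic bound) also coincides with the paper's. The difference lies in the choice of explicit competitor showing the infimum is finite. The paper takes any admissible $\PP$ for $\MREu(\Gamma)$ (whose existence is already known and which automatically has $\AA(\PP)<\infty$) and pushes it forward by $\Phi^\nu:\rho\mapsto(\tau_{\nu t(1-t)}*\rho_t)_t$; the estimate \eqref{eq:fund_unweight} of Lemma~\ref{lem:fund} then gives $\HH_\nu(\Phi^\nu{}\pf\PP)<\infty$ directly from the marginal entropy condition \eqref{eq:M_initial_final_entropy_finite}. Your competitor instead regularizes the \emph{linear} interpolation $(1-t)\rho_0+t\rho_1$. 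The Fisher part can indeed be controlled by the de~Bruijn--type argument you sketch (using convexity of $F$ and monotonicity along the heat flow), but the kinetic part is genuinely delicate: the linear interpolation need not have finite $\A$ when $\rho_0,\rho_1$ are merely of finite entropy, and after regularization the velocity field acquires a term coming from $\tau_{s(t)}*(\rho_1-\rho_0)$ whose $L^2(\rho_t)$ norm requires a lower bound on $\rho_t$ that degenerates as $t\to0,1$. Your phrase ``analogous heat-kernel estimates'' does not resolve this; it would need a separate argument (e.g.\ patching with a different curve near the endpoints, or an integration-by-parts trick). The paper's choice sidesteps the issue entirely because it starts from a curve already known to have finite kinetic action.

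In short: correct approach, matching the paper in structure; only the nonemptiness step uses a less convenient competitor whose kinetic estimate, as written, has a gap. Replacing the linear interpolation by a Wasserstein geodesic (or any $\MREu$-admissible plan) and invoking Lemma~\ref{lem:fund} closes it immediately.
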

This is perhaps not completely standard in this form due to our choice of exposition in terms of traffic plans, and we include the details for the sake of completeness.
\begin{proof}
	For the existence it suffices to show that there exists at least one admissible traffic plan. ($\HH_{\nu}$ being proper and lower semicontinuous, the direct method in the calculus of variations applies.)
	In order to find such a traffic plan, one can either adapt the proof of \cite[Cor.~5.2]{arnaudon2017entropic}, or also observe that, given an admissible traffic plan $\PP$ for $\MREu(\Gamma)$, the traffic plan $\PP^{\nu}=\Phi^\nu\pf \PP$ constructed in the proof of Theorem~\ref{thm:GammaCv_multiphase} is admissible for $\MBro_{\nu}(\Gamma)$ -- in particular \eqref{eq:estimate_HH_finite} ensures that $\HH_\nu(\PP^\nu)< +\infty$.
	
	For the uniqueness part, we first show that if $\PP$ is a solution to $\MBro_{\nu}(\Gamma)$, then the conditional law $\PP^{\rho_0, \rho_1} \coloneqq  \PP(\,\bullet\, | \XX_0 = \rho_0 , \XX_1 = \rho_1)$ is a Dirac mass for $\Gamma$-almost all $(\rho_0, \rho_1)$.
	In other words, $\PP$ is supported on the graph of a measurable map, which assigns to any $(\rho_0, \rho_1)$ a unique curve $m=m[\rho_0,\rho_1] \in C([0,1]; \P(\T^d))$ joining $\rho_0$ to $\rho_1$.
	Indeed, let us define the average:
	\begin{equation*}
	m[\rho_0, \rho_1] \coloneqq  \int \rho \D \PP^{\rho_0, \rho_1}(\rho) \quad\in C([0,1];\P(\T^d)).
	\end{equation*}
	This curve is well defined for $\Gamma$-almost all $(\rho_0, \rho_1)$,
	and we claim that $\PP^{\rho_0, \rho_1} = \boldsymbol\delta_{m[\rho_0, \rho_1]}$ for $\Gamma$-almost all $(\rho_0, \rho_1)$.
	To check this, let us define
	\begin{equation*}
	\widetilde{\PP} \coloneqq  \int \boldsymbol\delta_{m[\rho_0, \rho_1]} \D \Gamma(\rho_0, \rho_1).
	\end{equation*}
	Because $m[\rho_0,\rho_1]$ has endpoints $\rho_0,\rho_1$ one can check that $\widetilde{\PP}_{0,1} = \Gamma$, and in the same spirit it is easy to see that $\widetilde\PP$ is incompressible in average (because $\PP$ is).
	By strict convexity of $\H_{\nu}$ and Jensen's inequality, we have for $\Gamma$-almost all $(\rho_0, \rho_1)$ 
	\begin{equation}
	\label{eq:Jensen_PPtilde}
	\H_{\nu}(m[\rho_0, \rho_1])
	=\H_\nu\left(\int \rho \D \PP^{\rho_0, \rho_1}(\rho)\right)
	\leq \int \H_{\nu}(\rho) \D \PP^{\rho_0, \rho_1}(\rho)
	\end{equation}
	with equality if and only if $\PP^{\rho_0, \rho_1}$ is a Dirac mass.
	To verify that equality holds as desired, let us integrate \eqref{eq:Jensen_PPtilde} with respect to $\Gamma$:
	by definition of $\widetilde{\PP}$ on the left-hand side, and using the disintegration formula \eqref{eq:disintegration} with respect to $\PP$ on the right-hand side,
	we get
	\begin{align*}
	 \HH_\nu(\widetilde \PP) &= \int \H_\nu(\rho)\D \widetilde\PP(\rho)=\iint \H_\nu(\rho)\D\boldsymbol \delta_{m[\rho_0,\rho_1]}(\rho)\D\Gamma(\rho_0,\rho_1)\\
	& =\int \H_\nu(m[\rho_0,\rho_1]) \D\Gamma(\rho_0,\rho_1)
	\\
	&\overset{\eqref{eq:Jensen_PPtilde}}{\leq} \int \left(\int\H_{\nu}(\rho) \D \PP^{\rho_0, \rho_1}(\rho)\right) \D\Gamma(\rho_0,\rho_1)\\
	 &= \int \H_\nu(\rho)\D\PP(\rho)=\HH_\nu(\PP).
	\end{align*}
	Since $\PP$ is a minimizer and $\widetilde\PP$ is admissible the reverse inequality $\HH_\nu(\PP)\leq \HH_\nu(\widetilde\PP)$ holds as well, thus we must have equality in \eqref{eq:Jensen_PPtilde} for $\Gamma$-a.e. $(\rho_0,\rho_1)$ and therefore $\PP^{\rho_0, \rho_1} = \boldsymbol\delta_{m[\rho_0, \rho_1]}$ as claimed.
	
	Finally, if $\PP_1$ and $\PP_2$ are two solutions to $\MBro(\Gamma)$ then, because $\FF_{\nu}$ is affine, $\PP_3 \coloneqq  (\PP_1 + \PP_2)/2$ is a solution as well and must be supported on a graph.
	But, $\PP_1,\PP_2$ being themselves supported on a graph, $\PP_1+\PP_2$ can be supported on a graph if and only if $\PP_1$ and $\PP_2$ coincide.
	Hence, uniqueness is proved.
	\qed
\end{proof}
\begin{Rem}
From this proof it is clear that we established a slightly stronger statement, namely that any minimizer for $\MBro$ must be supported on a graph $(\rho_0,\rho_1)\mapsto \delta_{m[\rho_0,\rho_1]}$.
This shows that the framework of traffic plans is not much more general than multiphase flows in the sense of Brenier's parametric setting, \emph{i-e} when the phases $\rho=(\rho^a)_{a}$ are labeled using the initial and final positions $a=(x,y)\in\T^d\times \T^d$ and the incompressibility reads $\int_{\T^d\times\T^d}\rho^a_t \D a=\Leb$ for all $t$.
Somehow we just proved that one can allow labeling on couples $(\rho_0,\rho_1)\in\P(\T^d)\times \P(\T^d)$ instead of $(x,y)\in \T^d\times\T^d$, but no better.
\end{Rem}
\smallskip
\begin{Rem}
\label{rem:necessary_marginal_entropies}
In addition to being a sufficient condition as stated above in Theorem~\ref{thm:exist_unique_MBro}, the entropy condition \eqref{eq:M_initial_final_entropy_finite} is in fact also necessary for $\MBro(\Gamma)$ to admit a (unique) solution.
Indeed, by the classical Logarithmic Sobolev Inequality, the Fisher Information controls the entropy $H(\rho)\leq C_d F(\rho)$.
Since $\FF(\PP)=\int_0^1\int F(\rho_t)\D \PP(\rho) \D t <\infty$ there exists at least a time $t_0\in[0,1]$ such that the average entropy $\int H(\rho_{t_0})\D \PP(\rho)\leq  C\int F(\rho_{t_0})\D\PP(\rho)<\infty$.
Moreover for an $AC^2$ curve the time derivative of the entropy can be computed by the chain rule $\frac{d}{dt}H(\rho_t)=\int \nabla\log\rho_t\cdot c_t\,\D\rho_t$, where $c_t(x)$ corresponds to the metric speed $\dot\rho_t$ in Theorem~\ref{thm:characterization_AC2}.
By definition of $\HH_\nu$, any plan with finite entropy $\HH_\nu(\PP)<\infty$ has both its metric speed $\dot\rho_t$ and Fisher information $F(\rho_t)$ controlled in the $L^2$ sense, hence $\frac{d}{dt} H(\rho_t)$ is controlled in $L^1$.
This $L^1$ bound allows to propagate $\int H(\rho_{t_0})\D \PP(\rho)<\infty$ to $\int H(\rho_{t})\D \PP(\rho)<\infty$ the whole interval $t\in[0,1]$, in particular to $t=0$ and $t=1$.
\end{Rem}

%%%%%%%%%%%%%%%%%%%%%%%%%%%%%%%%%%%%%%%%%%%%%%%%%%%%%%%%%%%%%%%%%%%%%%%%%%%%%%%%%%%%%%%%%%%%%%%%%%%%%%%%%%%%%%%
%%%%%%%%%%%%%%%%%%%%%%%%%%%%%%%%%%%%%%%%%%%%%%%%%%%%%%%%%%%%%%%%%%%%%%%%%%%%%%%%%%%%%%%%%%%%%%%%%%%%%%%%%%%%%%
\section{Properties of the Brownian motion on $\T^d$}
\label{section:brownian_torus}
Here we give detailed proofs of some technical results used in Section~\ref{section:GammaCv_process} for the Brownian motion and bridges on the torus, mainly Lemma~\ref{lem:estimation_int_AN_Bxy} and Lemma~\ref{lem:entropy_translation_bridges_torus}.
Throughout this appendix, barred quantities will live in $\R^d$, while unbarred quantities will live in the torus.
Typically, we shall write $\oomega\in C([0,1];\R^d)$ and $\omega\in C([0,1];\T^d)$,
and the canonical processes will read $\overline X_t:\bar\omega\mapsto\bar\omega_t$ and $X_t:\omega\mapsto\omega_t$.
The pinned Brownian motions read accordingly $\Rbar^\nu_{\bar x}$ on the whole space and $R^\nu_x$ on the torus.
%
%%%%%%%%%%%%%%%%%%%%%%%%%%%%%%%%%%%%%%%%%%%%%%%%%%%%%%%%
\subsection{Brownian bridges on the torus}
By definition, the pinned Brownian motion on $\T^d$ is nothing but the projection of a pinned Brownian motion on $\R^d$, \emph{i-e} $R^\nu_x=\Pi\pf \Rbar^\nu_{\bar x}$ as soon as the starting points are chosen consistently, $x=\pi(\bar x)$.
However, the bridges on $\T^d$
$$
R^{\nu,x,y} \coloneqq  R^\nu_x(\,\bullet\,|\,X_1=y) = R^{\nu}(\,\bullet\,|\,X_0=x,X_1=y)\phantom{.}
$$
are \emph{not} the projection of the bridges on $\R^d$
$$
\Rbar^{\nu,\bar x,\bar y} \coloneqq  \Rbar^{\nu}_{\bar x}(\,\bullet\,|\,\overline X_1=\bar y).
\phantom{= R^{\nu}(\,\bullet\,|\,X_0=x,X_1=y)}
$$
The former can however be expressed as mixtures of the latter
\begin{Lem}
	\label{lem:brownian_bridge}
	Take $x$ and $y$ in $\T^d$, and choose any lifts ${\bar x}$ and ${\bar y}$ in $\R^d$ such that $\pi({\bar x})=x$ and $\pi({\bar y})=y$.
	Then
	\begin{equation*}
	R^{\nu, x, y} = \frac{1}{Z^{\nu,x,y}}\sum_{\bar l \in \Z^d} \exp\left( - \frac{|{\bar y}-{\bar x} + \bar l|^2}{2 \nu} \right) \Pi \pf \Rbar^{\nu, {\bar x}, {\bar y} + \bar l},
	\end{equation*}
	where $Z^{\nu, x,y}\coloneqq \sum\limits_{\bar l \in \Z^d} \exp\left( - \frac{|{\bar y}-{\bar x} + \bar l|^2}{2 \nu} \right)>0$ is a normalization constant.
\end{Lem}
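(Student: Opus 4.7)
The plan is to derive the formula by a disintegration argument starting from the defining relation between the Brownian motion on $\T^d$ and on $\R^d$. First I would fix lifts $\bar x, \bar y \in \R^d$ of $x$ and $y$, and recall that by construction the Brownian motion on the torus is the projection of the one on $\R^d$, i.e.\ $R^\nu_x = \Pi \pf \Rbar^\nu_{\bar x}$. Since $\pi^{-1}(y) = \bar y + \Z^d$, the event $\{X_1 = y\}$ under $R^\nu_x$ corresponds to the event $\{\overline X_1 \in \bar y + \Z^d\}$ under $\Rbar^\nu_{\bar x}$, which is the disjoint union of the null events $\{\overline X_1 = \bar y + \bar l\}$ for $\bar l \in \Z^d$. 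The bridge $R^{\nu,x,y}$ is thus the $\Pi$-pushforward of the regular conditional distribution $\Rbar^\nu_{\bar x}(\bullet \mid \overline X_1 \in \bar y + \Z^d)$.

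Second, I would identify this regular conditional distribution as a countable mixture of the Euclidean bridges $\Rbar^{\nu, \bar x, \bar y + \bar l}$. To this end, I would test against an arbitrary continuous bounded functional $\varphi$ on $C([0,1]; \T^d)$. Using the classical disintegration of $\Rbar^\nu_{\bar x}$ with respect to $\overline X_1$ (whose law is a Gaussian of variance $\nu$ centered at $\bar x$, with density $(2\pi\nu)^{-d/2} \exp(-|\bar z - \bar x|^2/(2\nu))$ w.r.t.\ Lebesgue on $\R^d$), one has
\begin{equation*}
\int \varphi \circ \Pi \,\D \Rbar^\nu_{\bar x}(\,\bullet \mid \overline X_1 \in \bar y + \Z^d) = \frac{\sum_{\bar l \in \Z^d} e^{-|\bar y - \bar x + \bar l|^2/(2\nu)} \int \varphi \circ \Pi \,\D \Rbar^{\nu,\bar x, \bar y + \bar l}}{\sum_{\bar l \in \Z^d} e^{-|\bar y - \bar x + \bar l|^2/(2\nu)}},
\end{equation*}
where the common prefactor $(2\pi\nu)^{-d/2}$ cancels between numerator and denominator. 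Here the formal ratio of Gaussian densities gives the weights, while the regular conditional law knowing $\overline X_1 = \bar y + \bar l$ is precisely the pinned Brownian bridge $\Rbar^{\nu, \bar x, \bar y + \bar l}$.

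Rewriting the denominator as the constant $Z^{\nu,x,y}$ from the statement and pushing forward each term by $\Pi$, the identity reads
\begin{equation*}
\int \varphi \,\D R^{\nu,x,y} = \frac{1}{Z^{\nu,x,y}} \sum_{\bar l \in \Z^d} e^{-|\bar y - \bar x + \bar l|^2/(2\nu)} \int \varphi \,\D (\Pi \pf \Rbar^{\nu, \bar x, \bar y + \bar l}),
\end{equation*}
which is exactly the announced formula. The only delicate point is the justification of the disintegration along the null fibers $\{\overline X_1 = \bar y + \bar l\}$; this is standard for Gaussian processes since the family $(\Rbar^{\nu, \bar x, \bar z})_{\bar z \in \R^d}$ is a weakly continuous version of the conditional law, and the rapid Gaussian decay of the weights makes the countable sum unambiguously convergent for any bounded $\varphi$.
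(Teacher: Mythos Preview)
Your proposal is correct and follows essentially the same strategy as the paper: both arguments disintegrate the pinned Brownian motion $\Rbar^\nu_{\bar x}$ over the Gaussian terminal marginal $\overline X_1$, partition the fiber $\pi^{-1}(y)=\bar y+\Z^d$, and push forward by $\Pi$ to recover the toral bridge as a weighted mixture of projected Euclidean bridges. The paper first reduces to $x=0$ by shift invariance, but this is cosmetic.

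The one place where the paper is slightly more careful is the step you flag as ``the only delicate point.'' Rather than conditioning $\Rbar^\nu_{\bar x}$ directly on the null event $\{\overline X_1\in\bar y+\Z^d\}$, the paper writes the full disintegration
\[
R^\nu_0=\Pi\pf\Rbar^\nu_0=\int_{\T^d}\Bigg\{\frac{1}{\sqrt{2\pi\nu}^d}\sum_{\bar l\in\Z^d}\exp\Big(-\frac{|i(y)+\bar l|^2}{2\nu}\Big)\,\Pi\pf\Rbar^{\nu,0,i(y)+\bar l}\Bigg\}\,\D y,
\]
observes that for each $y$ the bracketed measure is supported on the fiber $\{X_1=y\}$, and then invokes the \emph{uniqueness} part of the disintegration theorem to identify the bracket with $R^{\nu,0,y}$. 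This sidesteps any direct appeal to regular conditional probabilities on null sets and may be worth adopting in place of your heuristic step.
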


Remark that because of \eqref{eq:explicit_heat_kernel}, 
\begin{equation}
\label{eq:defZ}
Z^{\nu,x,y} =(2 \pi \nu)^{d/2} \tau_{\nu}(y-x).
\end{equation} 
\begin{proof}
	We only give the proof for $x=0$, the general case is identical by shift invariance.
	We first observe that the right-hand side in our statement obviously does not depend on the particular choice of the lifts, hence for $x=0$ it suffices to establish equality with $\overline{x} = 0$.
	Let $\overline{R}{}_0^\nu$ and $R_0^\nu = \Pi \pf \overline{R}{}^\nu_0$ be the Brownian motions started from the origin on $\R^d$ and $\T^d$, respectively, and consider $i: \T^d \to \R^d$ a measurable right inverse of the canonical projection $\pi: \R^d \to \T^d$.
	Applying \eqref{eq:disintegration_measure} to disintegrate $p = \overline{R}{}_0^\nu$ with respect to $\Phi = \overline X_1$, we have
	\begin{equation*}
	\overline{R}{}_0^\nu =  \int_{\R^d} \overline{R}{}^{\nu,0,\overline{y}}\frac{1}{\sqrt{2\pi\nu}^d} \exp \left( - \frac{|\overline{y}|^2}{2 \nu} \right) \D \overline{y}.  
	\end{equation*}
	Partitioning $\R^d$ in cubes $\left\{\overline{l} + i(\T^d)\right\}_{\overline{l} \in \Z^d}$ leads to
	\begin{equation*}
	\overline{R}{}_0^\nu =  \int_{\T^d} \frac{1}{\sqrt{2\pi\nu}^d}\sum_{\overline{l} \in \Z^d}\exp \left( - \frac{|i(y) + \overline{l}|^2}{2 \nu} \right) \overline{R}{}^{\nu,0,i(y) + \overline{l}}\D y,
	\end{equation*}
	hence by linearity of the pushforward operation $\Pi \pf$ we get
	\begin{equation*}
	R_0^\nu = \Pi \pf \overline{R}{}^\nu_0= \int_{\T^d}
	\Bigg\{ 
	\frac{1}{\sqrt{2\pi\nu}^d} \sum_{\overline{l} \in \Z^d}\exp \left( - \frac{|i(y) + \overline{l}|^2}{2 \nu} \right) \Pi \pf\overline{R}{}^{\nu,0,i(y) + \overline{l}}
	\Bigg\}\D y.  
	\end{equation*}
	Remark next that, for all $y$ and by definition of the inverse $\pi\circ i(y)=y$, the integrand in this right-hand side is a measure supported on the fiber $\{X_1 = y\}$: By uniqueness in the disintegration theorem \cite[Thm. 5.3.1]{ambrosio2006gradient} one simply reads off the last equality the conditioning
	$$
	 R^{\nu,0,y}=R^\nu_0(\bullet\,|\,X_1=y)
	 =\frac{1}{\sqrt{2\pi\nu}^d} \sum_{\overline{l} \in \Z^d}\exp \left( - \frac{|i(y) + \overline{l}|^2}{2 \nu} \right) \Pi \pf\overline{R}{}^{\nu,0,i(y) + \overline{l}}\\
	 $$
	 and the result follows.
\end{proof}
\qed
%
%%%%%%%%%%%%%%%%%%%%%%%%%%%%%%%%%%%%%%%%%%%%%%%%%%%%%%%
\subsection{Proof of Lemma~\ref{lem:estimation_int_AN_Bxy}}
Let us start with estimate \eqref{eq:estimation_int_AN_B}.
Since the increments of the Brownian motion are independent and stationary we have first
\begin{multline}
\label{eq:independent_increments}
 \int \exp\left(\alpha \frac{A_N(\omega)}{\nu}\right)\D R^\nu(\omega)
 =\int \exp\left(\alpha \sum\limits_{n=0}^{N-1}\frac{\dist^2(\omega_{t_n},\omega_{t_{n+1}})}{2\nu\tau}\right)\D R^{\nu}(\omega)
 \\
=  \left[\int \exp\left(\alpha \frac{\dist^2(\omega_{0},\omega_{\tau})}{2\nu\tau}\right)\D R^{\nu}(\omega)\right]^N
\end{multline}
with $\tau=1/N$.
Whence, by definition of the reversible Brownian motion $R^{\nu}$,
\begin{multline*}
\int \exp  \left(\alpha \frac{\dist^2(\omega_{0},\omega_{\tau})}{2\nu\tau}\right)\D R^{\nu}(\omega)
\\
= \frac{1}{(2\pi \nu \tau)^{d/2}} \int_{\R^d} \exp\left(\alpha \frac{\dist^2(0, \pi(\bar y))}{2\nu\tau}\right) \exp\left( -\frac{|\bar y|^2}{2\nu \tau} \right) \D \bar y
\\
\leq \frac{1}{(2\pi \nu \tau)^{d/2}} \int_{\R^d} \exp\left((\alpha-1) \frac{|\bar y|^2}{2\nu\tau}\right) \D \bar y,
\end{multline*}
where we used $\dist(0,\pi(\bar y))\leq |\bar y-0|$ in the last line.
Because we were cautious enough to choose $\alpha < 1$ this quantity is finite, and changing variables $\bar z=\sqrt{\frac{1-\alpha}{\nu \tau}} \bar y$ in the integral yields 
\begin{align}
\notag \int \exp\left(\alpha \frac{\dist^2(\omega_{0},\omega_{\tau})}{2\nu\tau}\right)\D R^{\nu}(\omega) &\leq  \frac{1}{(2\pi (1-\alpha) )^{d/2}} \int_{\R^d} \exp\left( -\frac{|\bar z|^2}{2} \right) \D \bar z \\
\label{eq:estimate_one_increment} &= \frac{1}{(1-\alpha)^{d/2}}.
\end{align}
Gathering \eqref{eq:independent_increments}\eqref{eq:estimate_one_increment} gives exactly \eqref{eq:estimation_int_AN_B}.
\\

For the conditioned version \eqref{eq:estimation_int_AN_Bxy}, choose arbitrary lifts ${\bar x},\bar y\in \R^d$ of $x,y\in\T^d$.
From Lemma~\ref{lem:brownian_bridge} we deduce 
	\begin{multline}
	\label{eq:expectation_action_bridge}
	\int  \exp\left(\frac{\alpha}{\nu} A_N(\omega)\right) \D R^{\nu,x,y}(\omega)\\
	= \frac{1}{Z^{\nu,x,y}} \hspace{-3pt}\sum_{\bar l \in \Z^d} \exp\left( - \frac{|{\bar y}-{\bar x} + \bar l|^2}{2\nu} \right)
	\hspace{-3pt}\int \hspace{-2pt} \exp\left(\frac{\alpha}{\nu} A_N\hspace{-1pt}\circ\hspace{-1pt}\Pi(\oomega)\right) \hspace{-2pt} \D \Rbar{}^{\nu,{\bar x},{\bar y}+\bar l}(\oomega).
	\end{multline}
	For arbitrary points ${\bar p},{\bar q}\in\R^d$ we first estimate the terms
	\begin{multline*}
	\Lambda({\bar p},{\bar q})\coloneqq  \int \exp\left(\frac{\alpha}{\nu} A_N\circ\Pi(\oomega))\right) \D \overline{R}{}^{\nu,{\bar p},{\bar q}}(\oomega)\\
	=
	\int \exp\left(\frac{\alpha}{\nu} \sum\limits_{n=0}^{N-1}\frac{\dist^2(\pi(\oomega_{t_n}),\pi(\oomega_{t_{n+1}}))}{2\tau}\right) \D \overline{R}{}^{\nu,{\bar p},{\bar q}}(\oomega)
	\end{multline*}
	appearing in the summand of \eqref{eq:expectation_action_bridge}.
	Since $\dist(\pi(\overline u), \pi(\overline v)) \leq |\overline v-\overline u|$ we have
	\begin{align*}
	\Lambda(\bar p,\bar q) &\leq \int \exp\left( \frac{\alpha}{2 \nu \tau} \sum_{n=0}^{N-1} |\oomega_{t_{n+1}} - \oomega_{t_n}|^2\right)\D \overline{R}{}^{\nu, \bar p, \bar q}(\oomega)\\
	&=\E_{\overline{R}{}^{\nu, \bar p,\bar q}}\left[ \exp\left( \frac{\alpha}{2 \nu \tau} \sum_{n=0}^{N-1} |\overline X_{t_{n+1}} - \overline X_{t_n}|^2\right) \right].
	\end{align*}
	Moreover, the law of the canonical process $\overline X_t$ under the bridge $\overline{R}{}^{\nu, \bar p,\bar q}$ is the same as the law of $\overline Y_t\coloneqq \overline X_t + (1-t)(\bar p-\overline X_0) + t(\bar q-\overline X_1)$ under the law of the standard Brownian motion $\Rbar^{\nu}_0$ started from the origin.
	Thus
	\begin{align*}
	\Lambda(\bar p,\bar q) 
	&  \leq 
	\E_{\overline{R}{}^{\nu}_0}\left[ \exp\left( \frac{\alpha}{2 \nu \tau} \sum_{n=0}^{N-1} |\overline Y_{t_{n+1}} - \overline Y_{t_n}|^2\right) \right]\\
	&  =
	\E_{\overline{R}{}^{\nu}_0}\left[ \exp\left( \frac{\alpha}{2 \nu \tau} \sum_{n=0}^{N-1} \Big|(\overline X_{t_{n+1}} - \overline X_{t_n}) + \tau \Big\{(\bar q - \bar p) - (\overline X_1 - \overline X_0)\Big \}\Big|^2\right) \right] .
	\end{align*}
	Expanding $(a+b-c)^2$ in the sum and recalling that $N \tau = 1$, it is easy to get:
	\begin{align*}
	&\sum_{n=0}^{N-1} \left|(\overline X_{t_{n+1}} - \overline X_{t_n}) + \tau (\bar q - \bar p) - \tau(\overline X_1 - \overline X_0)\right|^2 \\
	&\hspace{1cm}
	= \sum_{n=0}^{N-1} \left|\overline X_{t_{n+1}} - \overline X_{t_n}\right|^2 
	+ \sum_{n=0}^{N-1} \tau^2 \left |\bar q - \bar p\right |^2
	+ \sum_{n=0}^{N-1} \tau^2 \left |\overline X_1 - \overline X_0\right |^2
	\\
	&\hspace{2cm}
	+2 \sum_{n=0}^{N-1}(\overline X_{t_{n+1}} - \overline X_{t_n})\cdot\tau (\bar q - \bar p)
	-2 \sum_{n=0}^{N-1}\tau(\bar q-\bar p)  \cdot \tau (\overline X_1 - \overline X_0)\\
	& \hspace{2cm}  
	- 2 \sum_{n=0}^{N-1}(\overline X_{t_{n+1}} - \overline X_{t_n})\cdot\tau (\overline X_1 - \overline X_0)
	\\
	&\hspace{1cm}
	= \sum_{n=0}^{N-1} |\overline X_{t_{n+1}} - \overline X_{t_n}|^2 + \tau | \bar q - \bar p|^2 + \tau |\overline X_1 - \overline X_0|^2\\
	& \hspace{2cm}
	+ 2\tau (\overline X_1-\overline X_0)(\bar q-\bar p) 
	-2\tau (\bar q-\bar p)(\overline X_1-\overline X_0)
	- 2\tau |\overline X_1-\overline X_0|^2
	\\
	& \hspace{1cm}
	= \sum_{n=0}^{N-1} |\overline X_{t_{n+1}} - \overline X_{t_n}|^2 + \tau | \bar q - \bar p|^2 - \tau |\overline X_1 - \overline X_0|^2.
	\end{align*} 
	As a consequence, and by independence of the Brownian increments,
	\begin{align*}
	\Lambda &(\bar p,\bar q)
	\\
	&\leq \exp\left( \frac{\alpha}{2 \nu} |\bar q - \bar p|^2 \right)\! \E_{\overline{R}{}^{\nu}_0}\hspace{-4pt}\left[ \exp \! \left( \! \frac{\alpha}{ 2\nu \tau }\! \left\{\! \sum_{n=0}^{N-1} |\overline X_{t_{n+1}} - \overline X_{t_n}|^2 - \tau|\overline X_1 - \overline X_0|^2 \! \right\} \! \right) \! \right] 
	\\
	&\leq \exp\left( \frac{\alpha}{2 \nu} |\bar q - \bar p|^2 \right)
	\E_{\overline{R}^{\nu}_0}\left[ \exp\left( \frac{\alpha}{ 2\nu \tau }  \sum_{n=0}^{N-1} |\overline X_{t_{n+1}} - \overline X_{t_n}|^2 \right) \right] 
	\\
	&=
	\exp\left( \frac{\alpha}{2 \nu} |\bar q - \bar p|^2 \right)\times
	\left(\E_{\overline{R}^{\nu}_0}\left[ \exp\left( \frac{\alpha}{ 2\nu \tau }|\overline X_{\tau} - \overline X_{0}|^2 \right) \right]\right)^N
	\\
	&= \displaystyle{\exp\left(\frac{\alpha}{2 \nu} |\bar q - \bar p|^2 \right)}\times\frac{1}
	{(1 - \alpha)^{Nd/2}},
	\end{align*}
	where the last equality follows from the same explicit computation as in \eqref{eq:estimate_one_increment} with $\overline X_0=0$ for $\overline R^\nu_0$-almost all $\omega$.
	
	Finally, setting $\bar p=\bar x$ and $\bar q=\bar y+\bar l$ as in \eqref{eq:expectation_action_bridge}, using formulas \eqref{eq:explicit_heat_kernel}\eqref{eq:defZ} and the dimensional bounds \eqref{eq:estim_heat_flow} on the heat kernel, we get when $\nu \leq 1$:
	\begin{align*}
	\int \exp& \Big(\frac{\alpha}{\nu} A_N(\omega)\Big) \D R^{\nu,x,y}(\omega) \\
	&\leq \frac{1}{(1 - \alpha)^{dN/2}} \times\frac{1}{Z^{\nu,x,y}} \sum_{\bar l \in \Z^d} \exp\left( - (1 - \alpha) \frac{|\bar y-\bar x + \bar l|^2}{2 \nu} \right)\\
	&= \frac{1}{(1 - \alpha)^{dN/2}} \times \frac{\tau_{\frac{\nu}{1-\alpha}}(y-x)}{\tau_{\nu}(y-x)} \\
	&\leq \frac{1}{(1 - \alpha)^{dN/2}}  \times \frac{K_d\exp\left( -\frac{(1-\alpha)}{2\nu} \dist^2(x,y) \right)}{k_d\exp\left( -\frac{1}{2\nu} \dist^2(x,y) \right)}\\
	&\leq \frac{C_d}{(1 - \alpha)^{dN/2}} \exp\left( \frac{\alpha}{2\nu} \dist^2(x,y) \right)
	\end{align*}
	and the proof is achieved.
	\qed
%
%%%%%%%%%%%%%%%%%%%%%%%%%%%%%%%%%%%%%%%%%%%%%%%%%%%%%%%%	
\subsection{Proof of Lemma~\ref{lem:entropy_translation_bridges_torus}}
We first establish the corresponding result in the whole space.
As in the torus in \eqref{eq:def_translation}, we define the translation operator in $\R^d$ as:
\begin{equation*}
\Tbar_{\overline{\omega}}:\quad  \overline{\alpha} \in C([0,1]; \R^d) \quad  \mapsto \quad  \overline{\omega} + \overline{\alpha} \in C([0,1]; \R^d).
\end{equation*}
We recall that $\Bbar^{\nu} = \Rbar^{\nu, 0,0}$ is the Brownian bridge of diffusivity $\nu$ on $\R^d$ joining $0$ to $0$. We have:
\begin{Lem}
	\label{lem:entropy_translation_bridge}
	Let $\overline{\omega} \in AC^2([0,1]; \R^d)$ and $\nu >0$.
	Then 
	\begin{equation*}
	\nu H\left(\Tbar_{\overline{\omega}} {}\pf \Bbar^{\nu} \,|\, \Rbar^{\nu, \overline{\omega}_0, \overline{\omega}_1}\right) = \frac{1}{2} \int_0^1 |\dot{\overline{\omega}}_t|^2 \D t - \frac{|\overline{\omega}_1 - \overline{\omega}_0|^2}{2}.
	\end{equation*}
\end{Lem}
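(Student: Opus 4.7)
\medskip

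\noindent\textbf{Proof plan.}
The idea is to recognize both path measures as Gaussian measures with identical covariance structure but shifted means, and to apply the Cameron--Martin shift formula.

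First, I would use Proposition~\ref{prop:entropy_one_to_one} to ``remove'' the translation. Since the map $\Tbar_{-\overline\omega}$ is a measurable bijection on $C([0,1];\R^d)$ (with inverse $\Tbar_{\overline\omega}$), pushing both measures forward by $\Tbar_{-\overline\omega}$ preserves the relative entropy:
\[
H\bigl(\Tbar_{\overline\omega}\pf\Bbar^\nu \,\big|\, \Rbar^{\nu,\overline\omega_0,\overline\omega_1}\bigr)
= H\bigl(\Bbar^\nu \,\big|\, \Tbar_{-\overline\omega}\pf\Rbar^{\nu,\overline\omega_0,\overline\omega_1}\bigr).
\]
The bridge $\Rbar^{\nu,\overline\omega_0,\overline\omega_1}$ is the law of $(1-t)\overline\omega_0 + t\overline\omega_1 + \beta_t$ with $\beta\sim\Bbar^\nu$. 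Setting
\[
\psi_t := \overline\omega_t - \bigl[(1-t)\overline\omega_0 + t\overline\omega_1\bigr], \qquad \psi_0=\psi_1=0,
\]
translating by $-\overline\omega_t$ cancels the affine interpolation up to the path $-\psi_t$, and we obtain $\Tbar_{-\overline\omega}\pf\Rbar^{\nu,\overline\omega_0,\overline\omega_1}=\Tbar_{-\psi}\pf\Bbar^\nu$. So the statement reduces to
\[
\nu H\bigl(\Bbar^\nu \,\big|\, \Tbar_{-\psi}\pf\Bbar^\nu\bigr) = \tfrac{1}{2}\int_0^1|\dot\psi_t|^2\,\D t.
\]

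Next, I would invoke the classical Cameron--Martin formula for the Brownian bridge: since $\psi$ is in the Cameron--Martin space (i.e.\ $\psi\in H^1_0((0,1);\R^d)$, which follows from $\overline\omega\in AC^2$ and $\psi_0=\psi_1=0$), the measure $\Tbar_{-\psi}\pf\Bbar^\nu$ is equivalent to $\Bbar^\nu$ with explicit density
\[
\frac{\D\,\Tbar_{-\psi}\pf\Bbar^\nu}{\D\Bbar^\nu}(\alpha)
= \exp\!\left(-\frac{1}{\nu}\int_0^1\dot\psi_t\cdot\D\alpha_t - \frac{1}{2\nu}\int_0^1|\dot\psi_t|^2\,\D t\right),
\]
where $\int_0^1\dot\psi_t\cdot\D\alpha_t$ is a well-defined Wiener-type integral (it may equivalently be written, using integration by parts and $\psi_0=\psi_1=0$, as $-\int_0^1 \alpha_t\cdot\ddot\psi_t\,\D t$ when $\psi\in H^2$, and extended to $H^1_0$ by approximation). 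Inverting this density and plugging into the definition of the relative entropy gives
\[
H\bigl(\Bbar^\nu\,\big|\,\Tbar_{-\psi}\pf\Bbar^\nu\bigr)
= \mathbb{E}_{\Bbar^\nu}\!\left[\frac{1}{\nu}\int_0^1\dot\psi_t\cdot\D\alpha_t + \frac{1}{2\nu}\int_0^1|\dot\psi_t|^2\,\D t\right].
\]
Since under $\Bbar^\nu$ the random variable $\int_0^1\dot\psi_t\cdot\D\alpha_t$ is a centered Gaussian (it equals $-\int_0^1 \alpha_t\cdot\ddot\psi_t\,\D t$ for smooth $\psi$, which is linear in the centered process $\alpha$), its expectation vanishes, giving
\[
\nu H\bigl(\Bbar^\nu\,\big|\,\Tbar_{-\psi}\pf\Bbar^\nu\bigr) = \tfrac{1}{2}\int_0^1|\dot\psi_t|^2\,\D t.
\]

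Finally, I would carry out the elementary algebraic simplification
\[
\int_0^1|\dot\psi_t|^2\,\D t = \int_0^1\bigl|\dot{\overline\omega}_t - (\overline\omega_1-\overline\omega_0)\bigr|^2\,\D t = \int_0^1|\dot{\overline\omega}_t|^2\,\D t - |\overline\omega_1-\overline\omega_0|^2,
\]
using that $\int_0^1\dot{\overline\omega}_t\,\D t = \overline\omega_1-\overline\omega_0$. Combining yields the announced identity.

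The main technical obstacle is the rigorous justification of the Cameron--Martin density for the Brownian \emph{bridge} (as opposed to the Brownian motion), in particular the correct interpretation of the pairing $\int_0^1\dot\psi_t\cdot\D\alpha_t$ and the vanishing of its expectation under $\Bbar^\nu$; but this is entirely standard and could also be bypassed by a direct Gaussian computation, noting that both measures are Gaussian with identical covariance $\nu(s\wedge t - st)\,\mathrm{Id}$ and using the formula $H(\mathcal N(m_1,C)\,|\,\mathcal N(m_2,C))=\tfrac12\|m_1-m_2\|_{H_C}^2$ for the Cameron--Martin norm $\|\cdot\|_{H_C}$.
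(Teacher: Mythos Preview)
Your proof is correct and follows the same overall strategy as the paper: reduce to the centered path $\psi_t=\overline\omega_t-[(1-t)\overline\omega_0+t\overline\omega_1]$ (the paper calls it $\overline\alpha$), compute $\nu H=\tfrac12\int_0^1|\dot\psi_t|^2\D t$ via Cameron--Martin, and finish with the same algebraic expansion. The one substantive difference is how the Cameron--Martin identity for the \emph{bridge} is justified. You invoke it directly (or via the Gaussian formula $H(\mathcal N(m_1,C)\,|\,\mathcal N(m_2,C))=\tfrac12\|m_1-m_2\|_{H_C}^2$), flagging this as the only nontrivial input. The paper instead derives it from the textbook Cameron--Martin formula for the \emph{unconditioned} Brownian motion: starting from $\nu H(\Tbar_{\overline\alpha}\pf\Rbar^\nu\,|\,\Rbar^\nu)=\tfrac12\int_0^1|\dot{\overline\alpha}_t|^2\D t$, it disintegrates both sides with respect to $(\overline X_0,\overline X_1)$ using Proposition~\ref{prop:disintegration_entropy} (the marginal term vanishes since $\overline\alpha_0=\overline\alpha_1=0$), and then shows by translation invariance that the conditional entropy $H(\Tbar_{\overline\alpha}\pf\Rbar^{\nu,\bar x,\bar y}\,|\,\Rbar^{\nu,\bar x,\bar y})$ is independent of $(\bar x,\bar y)$, hence equal to its $\Rbar^\nu_{0,1}$-average. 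Your route is shorter and perhaps more transparent if one is willing to take the bridge Cameron--Martin formula (or the abstract Gaussian shift formula) as a black box; the paper's route has the advantage of using only the most classical version of Cameron--Martin together with tools already set up in Section~\ref{sec:notations}.
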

\begin{proof}
	We will rather establish the following equivalent formula: if ${\overline\alpha} \in AC^2([0,1]; \R^d)$ satisfies ${\overline\alpha}_0 = {\overline\alpha}_1 = 0$, then for all $\nu >0$ and $\bar x,\bar y \in \R^d$,
	\begin{equation}
	\label{eq:entropy_translated_alpha}
	\nu H\left(\Tbar_{{\overline\alpha}} {}\pf \Rbar^{\nu, \bar x, \bar y} \, \Big| \, \Rbar^{\nu, \bar x, \bar y}\right) = \frac{1}{2}\int_0^1 |\dot{{\overline\alpha}}_t|^2 \D t.
	\end{equation}
	If $\overline\xi_t\coloneqq (1-t) \overline{\omega}_0 + t \overline{\omega}_1$, it will then suffice to apply this formula with 
	${\overline\alpha}_ t\coloneqq \overline{\omega}_t - \overline\xi_t$
	and to use the identities $\overline T_{\overline{\omega}} = \overline T_{{\overline\alpha}} \circ \overline T_{\overline\xi}$ and $\overline T_{\xi} {}\pf \Bbar^{\nu} = \Rbar^{\nu, \overline{\omega}_0, \overline{\omega}_1}$.
	
	So let us prove \eqref{eq:entropy_translated_alpha}.
	We fix ${\overline\alpha} \in AC^2([0,1]; \R^d)$ with ${\overline\alpha}_0 = {\overline\alpha}_1 = 0$ and $\nu >0$.
	First, by the standard Cameron-Martin formula, if $\overline{R}{}^{\nu}$ is any $\nu$ Brownian motion on $\R^d$ then
	\begin{equation*}
	\nu H(\Tbar_{{\overline\alpha}} {}\pf \overline{R}{}^{\nu} \, | \, \overline{R}{}^{\nu}) = \frac{1}{2} \int_0^1 |\dot{{\overline\alpha}}_t|^2 \D t.
	\end{equation*}
	Noticing that the marginals $(\overline{R}{}^{\nu})_{0,1}$ and $(T_{{\overline\alpha}} {}\pf \overline{R}{}^{\nu})_{0,1}$ coincide (because ${\overline\alpha}_0={\overline\alpha}_1=0$), we can apply Proposition~\ref{prop:disintegration_entropy} in order to condition on the endpoints $(X_0, X_1)$ and get:
	\begin{equation*}
	H(\Tbar_{{\overline\alpha}} {}\pf \overline{R}{}^{\nu} \, | \, \overline{R}{}^{\nu}) = 
	0 + \int H\Big( (\Tbar_{{\overline\alpha}} {}\pf \overline{R}{}^{\nu})^{\bar x, \bar y} \, \Big| \, \Rbar^{\nu,\bar x, \bar y}  \Big) \D \overline{R}{}^{\nu}_{0,1}(\bar x, \bar y).
	\end{equation*}
	Gathering these two formulas and observing that $(\Tbar_{{\overline\alpha}} {}\pf \overline{R}{}^{\nu})^{\bar x, \bar y} = \Tbar_{{\overline\alpha}} {}\pf (\Rbar^{\nu,\bar x, \bar y})$ we get
	\begin{equation}
	\label{eq:conditioned_CM}
	 \nu \int H( \Tbar_{{\overline\alpha}} {}\pf \Rbar^{\nu,\bar x,\bar y} \, | \, \Rbar^{\nu, \bar x,\bar y}  ) \D \overline{R}{}^{\nu}_{0,1}(x,y) = \frac{1}{2} \int_0^1 |\dot{{\overline\alpha}}_t|^2 \D t.
	\end{equation}
	Finally, take $\bar x,\bar y\in\R^d$ and consider the geodesic $\overline\xi_t\coloneqq (1-t) \bar x + t \bar y$.
	Then $\Rbar^{\nu, \bar x,\bar y} = \Tbar_{\overline\xi} {}\pf \Bbar^{\nu}$, the translations $\Tbar_{\overline\xi}$ and $\Tbar_{{\overline\alpha}}$ commute, and $\Tbar_{\overline\xi}$ is invertible.
	Whence by Proposition~\ref{prop:entropy_one_to_one}
	\begin{align*}
	H( \Tbar_{{\overline\alpha}} {}\pf \Rbar^{\nu,\bar x, \bar y} \, | \, \Rbar^{\nu, \bar x,\bar y}  ) &= H( \Tbar_{{\overline\alpha}} {}\pf \Tbar_{\overline\xi} {}\pf \Bbar^{\nu} \, | \, \Tbar_{\overline\xi} {\pf} \Bbar^{\nu}  ) \\
	&=H( \Tbar_{\xi} {}\pf \Tbar_{{\overline\alpha}} {}\pf \Bbar^{\nu} \, | \, \Tbar_{\overline\xi} {\pf} \Bbar^{\nu}  ) =H( \Tbar_{{\overline\alpha}} {}\pf \Bbar^{\nu} \, | \, \Bbar^{\nu}  ).
	\end{align*}
	Finally exploiting \eqref{eq:conditioned_CM}, we get for all $\bar x$ and $\bar y$ in $\R^d$:
	\begin{equation*}
	\nu H( \Tbar_{{\overline\alpha}} {}\pf \Rbar^{\nu,\bar x,\bar y} | \Rbar^{\nu, \bar x,\bar y}  ) = \nu H( \Tbar_{{\overline\alpha}} {}\pf \Bbar^{\nu} | \Bbar^{\nu}  ) =  \frac{1}{2} \int_0^1 |\dot{{\overline\alpha}}_t|^2 \D t
	\end{equation*}
	and the proof is complete.
	\qed
	\end{proof}
In order to deduce Lemma~\ref{lem:entropy_translation_bridges_torus} from Lemma~\ref{lem:entropy_translation_bridge} we need a canonical construction of a process on $\R^d$ out of a process on $\T^d$.
To this end, we choose $i : \T^d \to \R^d$ a measurable right inverse of the projection $\pi$ with bounded image.
For $\omega \in C([0,1]; \T^d)$ we denote by $I(\omega)$ the unique lift of $\omega$ starting from $i(\omega_0)$, and $I$ is of course a measurable right inverse of $\Pi$.
The entropy is invariant under the canonical projection in the following sense
\begin{Lem}
	\label{lem:Pi_one_to_one}
	Take $\overline{P}$ a probability measure and $\overline{R}$ a finite positive Radon measure on $C([0,1]; \R^d)$.
	Suppose that $\overline{P} \ll \overline{R}$ and that, $\overline{R}$-almost surely, $\overline X_0 = i(\pi(\overline X_0))$. 
	Then
	\begin{equation*}
	H(\Pi\pf \overline{P}\,|\, \Pi\pf \overline{R}) = H(\overline{P}\,|\,\overline{R}).
	\end{equation*}
\end{Lem}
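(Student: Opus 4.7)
The plan is to realize this as a direct application of Proposition~\ref{prop:entropy_one_to_one} with $\Phi = \Pi : C([0,1];\R^d) \to C([0,1];\T^d)$ and $\Psi = I : C([0,1];\T^d) \to C([0,1];\R^d)$, the measurable lift map introduced just above the lemma. The content of the statement is thus entirely contained in the verification that $I\circ\Pi = \mathrm{Id}_{C([0,1];\R^d)}$ holds $\overline R$-almost surely, and hence also $\overline P$-almost surely since $\overline P \ll \overline R$.

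To check this identity under the standing hypothesis, fix $\oomega \in C([0,1];\R^d)$ with $\oomega_0 = i(\pi(\oomega_0))$. By construction, $I(\Pi(\oomega))$ is the unique continuous lift of the projected path $\pi\circ\oomega = \Pi(\oomega) \in C([0,1];\T^d)$ whose starting point is $i(\Pi(\oomega)_0) = i(\pi(\oomega_0))$. But $\oomega$ itself is clearly another continuous lift of $\Pi(\oomega)$, starting precisely at $\oomega_0 = i(\pi(\oomega_0))$. By uniqueness of the continuous lift of a path from a prescribed initial point in the covering $\pi: \R^d \to \T^d$, the two lifts must coincide, hence $I(\Pi(\oomega)) = \oomega$.

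Applying Proposition~\ref{prop:entropy_one_to_one} with $\mathcal X = C([0,1];\R^d)$, $\mathcal Y = C([0,1];\T^d)$, $p = \overline P$, $r = \overline R$, $\Phi = \Pi$ and $\Psi = I$ then yields
\begin{equation*}
H(\overline P \,|\, \overline R) = H(\Pi\pf \overline P \,|\, \Pi\pf \overline R),
\end{equation*}
which is the desired equality. There is no genuine obstacle here; the only subtle point is the appeal to uniqueness of path-lifting in the covering $\pi$, which is what forces the hypothesis on the starting point to propagate from a pointwise statement at $t=0$ to an identity between whole trajectories.
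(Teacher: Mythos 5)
Your proof is correct and follows the same route as the paper: verify that $I\circ\Pi = \Id$ holds $\overline R$-almost surely on the set $\{\overline X_0 = i(\pi(\overline X_0))\}$ and then invoke Proposition~\ref{prop:entropy_one_to_one}. You simply spell out the unique path-lifting argument, which the paper leaves implicit.
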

\begin{proof}
On the set $\{ \overline X_0 = i(\pi(\overline X_0)) \}$ we have $\overline{R}$-almost surely $I \circ \Pi = \Id$, and our statement is a direct consequence of Proposition~\ref{prop:entropy_one_to_one}.
\qed
\end{proof}
With the above definition of the lift $I(\omega)$, observe that for all $\omega \in C([0,1]; \T^d)$ the shifted bridge $B^{\nu}_{\omega}$ from Definition~\ref{def:B^nu_omega} satisfies:
	\begin{equation*}
B^{\nu}_{\omega} \coloneqq  T_{\omega} {}\pf \Pi\pf \Bbar^{\nu} = \Pi\pf \Tbar_{I(\omega)} {}\pf \Bbar^{\nu}.
\end{equation*}

We are finally in position of establishing Lemma~\ref{lem:entropy_translation_bridges_torus}.
\begin{proof}[Proof of Lemma~\ref{lem:entropy_translation_bridges_torus}]
For notational convenience, we denote the lift of $\omega$ by
	\begin{equation*}
	\oomega\coloneqq I(\omega)\in C([0,1];\R^d).
	\end{equation*}
	By Lemma~\ref{lem:brownian_bridge}, we have
	\begin{align*}
	R^{\nu, \omega_0, \omega_1} &= \frac{1}{Z^{\nu,\omega_0,\omega_1}} \sum_{\bar l \in \Z^d} \exp\left( - \frac{|\oomega_1-\oomega_0 + \bar l|^2}{2 \nu} \right) \Pi \pf \Rbar^{\nu, \oomega_0, \oomega_1 + \bar l}\\
	&= \Pi\pf 
	\Bigg(
	\underbrace{
	\frac{1}{Z^{\nu,\omega_0,\omega_1}} \sum_{\bar l \in \Z^d} \exp\left( - \frac{|\oomega_1-\oomega_0 + \bar l|^2}{2 \nu} \right) \Rbar^{\nu, \oomega_0, \oomega_1 + \bar l}}_{ \coloneqq  \overline{B} {}^{\nu, \omega_0, \omega_1}}
	\Bigg). 
	\end{align*}
	(The superscripts in $\overline{B} {}^{\nu, \omega_0, \omega_1}$ do \emph{not} stand for the conditioning of some $\overline B^\nu$, but rather emphasizes the dependence of the measure $\overline{B} {}^{\nu, \omega_0, \omega_1}$ on the fixed endpoints $\omega_0,\omega_1\in \R^d$.)
	Observe that 
	\begin{itemize}
	\item
	all the measures involved in the definition of $\overline{B} {}^{\nu, \omega_0, \omega_1}$ are mutually singular (because $\Rbar^{\nu,\overline a,\overline b}\perp \Rbar^{\nu,\overline a,\overline b'}$ as soon as $\overline b\neq \overline b'$),
	\item 
	$\Tbar_{\oomega} {}\pf \Bbar^{\nu} \ll \Rbar^{\nu, \oomega_0, \oomega_1}$ by Lemma~\ref{lem:entropy_translation_bridge},
	\item
	$  \Rbar^{\nu, \oomega_0, \oomega_1} \ll  \overline{B} {}^{\nu, \omega_0, \omega_1}$ (because $\Rbar^{\nu, \oomega_0, \oomega_1}$ appears in the sum defining the measure $\overline{B} {}^{\nu, \omega_0, \omega_1}$ for $\bar l=0$),
	\end{itemize}
	As a consequence $\Tbar_{\oomega} {}\pf \Bbar^{\nu} \ll \overline{B} {}^{\nu, \omega_0, \omega_1}$, and the corresponding Radon-Nikodym derivative only involves the $\bar l=0$ contribution in $\overline B {}^{\nu,\omega_0,\omega_1}$.
	
	Moreover by definition~\ref{def:B^nu_omega} of $B^\nu_\omega=T_\omega {}\pf \Pi {}\pf \overline B ^\nu$ and because $T_\omega\circ \Pi=\Pi\circ \Tbar_{\oomega}$ one can write $B^\nu=\Pi{}\pf\Tbar_{\oomega} {}\pf \Bbar^{\nu}$.
	Since $\overline{B} {}^{\nu, \omega_0, \omega_1}$-almost surely $\overline X_0 = i(\pi(\overline X_0))$,
	we can apply Lemma~\ref{lem:Pi_one_to_one}
	\begin{align*}
	H(B^{\nu}_{\omega}\, | \, R^{\nu, \omega_0, \omega_1}) &=  
	H( \Pi{\pf}\Tbar_{\oomega} {}\pf \Bbar^{\nu}\, | \,\Pi{}\pf\overline{B} {}^{\nu, \omega_0, \omega_1})
	\\
	& = H( \Tbar_{\oomega} {}\pf \Bbar^{\nu}\, | \,\overline{B} {}^{\nu, \omega_0, \omega_1})
	\\
	&=H\left(  \Tbar_{\oomega} {}\pf \Bbar^{\nu} \,\Bigg | \, \frac{1}{Z^{\nu,\omega_0,\omega_1}} \exp\left(- \frac{| \oomega_1-\oomega_0 |^2}{2\nu}\right)  \Rbar^{\nu, \oomega_0, \oomega_1} \right)
	\\
	&= H\left(  \Tbar_{\oomega} {}\pf \Bbar^{\nu} \, | \, \Rbar^{\nu, \oomega_0, \oomega_1}\right)
	+ \frac{| \oomega_1-\oomega_0 | ^2}{2\nu} + \log Z^{\nu,\omega_0,\omega_1}
	\end{align*}
	where we used Proposition~\ref{prop:entropy_wrt_fR} in the last equality.
	We can compute the first entropy term in the right hand side using Lemma~\ref{lem:entropy_translation_bridge} (remark that the kinetic action of $\omega$ on the torus coincides with that of its lift $\oomega$) and we can estimate the last term using \eqref{eq:defZ}\eqref{eq:estim_heat_flow}, which leads to
	\begin{align*}
	\nu H(B^{\nu}_{\omega}\,|\, R^{\nu, \omega_0, \omega_1}) &\leq 
	\left(\frac 12\int_0^1|\dot\oomega_t|^2\,\D t  -  \frac{1}{2}|\oomega_1-\oomega_0|^2\right)\\
	&\hspace{2cm} + \nu\left(\frac{|\oomega_1-\oomega_0|^2}{2\nu} + \log K_d - \frac{\dist^2(\omega_0, \omega_1)}{2 \nu}\right)
	\\
	& =\frac{1}{2} \int_0^1 |\dot{\omega}_t|^2 \D t  - \frac{\dist^2(\omega_0, \omega_1)}{2 \nu} + \nu \log K_d
	\end{align*}
	and concludes the proof with $C \coloneqq  \log K_d$.
	\qed
\end{proof}

\end{appendices}

% \end{acknowledgements}

% BibTeX users please use one of
%\bibliographystyle{spbasic}      % basic style, author-year citations
\bibliography{bibliography}
\bibliographystyle{plain}

% Non-BibTeX users please use
% \begin{thebibliography}{}
%
% and use \bibitem to create references. Consult the Instructions
% for authors for reference list style.
%
% \bibitem{RefJ}
% Format for Journal Reference
% Author, Article title, Journal, Volume, page numbers (year)
% % Format for books
% \bibitem{RefB}
% Author, Book title, page numbers. Publisher, place (year)
% % etc
% \end{thebibliography}

\end{document}